\theoremstyle{plain}
\newtheorem{theorem}					{Theorem}[section]
\newtheorem{lemma}			[theorem]	{Lemma}
\newtheorem{corollary}		[theorem]	{Corollary}
\newtheorem{proposition}	[theorem]	{Proposition}
\theoremstyle{definition}
\newtheorem{example}		[theorem]	{Example}
\theoremstyle{remark}
\newtheorem*{exa*}					{Example}
\newtheorem*{rem*}					{Remark}
\DeclareMathOperator	{\IR}		{\mathbb{R}}
\DeclareMathOperator	{\IE}		{\mathbb{E}} 
\DeclareMathOperator	{\IP}		{\mathbb{P}} 
\DeclareMathOperator	{\IN}		{\mathbb{N}} 
\DeclarePairedDelimiter	{\abs}		{\lvert}	{\rvert}
\DeclarePairedDelimiter	{\norm}		{\lVert}	{\rVert}
\DeclarePairedDelimiter	{\skal}		{\langle}	{\rangle}
\newcommand				{\eins}		{\text{$\mathbbm{1}$}}
\newcommand		    	{\dpartial}	{\mathfrak{d}}
\renewcommand 			{\epsilon}	{\varepsilon}
\renewcommand			{\phi}		{\varphi}
\renewcommand			{\tilde}	{\widetilde}
\numberwithin			{equation}{section}
\begin{document}
\title{Modified log-Sobolev inequalities and two-level concentration}
\thanks{This research was supported by the German Research Foundation (DFG) via CRC 1283 ``\textit{Taming uncertainty and profiting from randomness and low regularity in analysis, stochastics and their applications}''.}
\author{Holger Sambale$^1$ and Arthur Sinulis$^1$}
\address{$^1$Faculty of Mathematics, Bielefeld University, Bielefeld, Germany}
\email{\{hsambale, asinulis\}@math.uni-bielefeld.de}
\keywords{Bernstein inequality, concentration of measure phenomenon, convex distance inequality, Hanson--Wright inequality, modified logarithmic Sobolev inequality, symmetric group}

\begin{abstract}
We consider a generic modified logarithmic Sobolev inequality (mLSI) of the form $\mathrm{Ent}_{\mu}(e^f) \le \tfrac{\rho}{2} \IE_\mu e^f \Gamma(f)^2$ for \emph{some} difference operator $\Gamma$, and show how it implies two-level concentration inequalities akin to the Hanson--Wright or Bernstein inequality. This can be applied to the continuous (e.\,g. the sphere or bounded perturbations of product measures) as well as discrete setting (the symmetric group, finite measures satisfying an approximate tensorization property, \ldots).

Moreover, we use modified logarithmic Sobolev inequalities on the symmetric group $S_n$ and for slices of the hypercube to prove Talagrand's convex distance inequality, and provide concentration inequalities for locally Lipschitz functions on $S_n$. Some examples of known statistics are worked out, for which we obtain the correct order of fluctuations, which is consistent with central limit theorems.
\end{abstract}

\maketitle
\section{Introduction}\label{section:Introduction}
Concentration and one-sided deviation inequalities have become an indispensable tool of probability theory and its applications. A question that arises frequently is to bound the fluctuations of a function $f = f(X_1, \ldots, X_n)$ of many random variables (or, equivalently, a function on a product space) around its mean, and often times it is possible to prove sub-Gaussian tail decay of the form
\[
\IP\big( f(X) - \IE f(X) \ge t \big) \le C\exp\Big( - \frac{t^2}{2K^2} \Big)
\]
for some $C \ge 1$, $K^2 > 0$ and all $t \ge 0$. There are various ways to establish sub-Gaussian estimates, such as the \emph{martingale method}, the \emph{entropy method} and an \emph{information-theoretic approach}, and we refer to the monograph \cite{BLM13} for further details. 

On the other hand, in some situations it is not possible to prove sub-Gaussian tails, and a suitable replacement might be Bernstein-type
\[
\IP\big( f(X) - \IE f(X) \ge t \big) \le C\exp\Big(- \frac{t^2}{2(a+bt)} \Big)
\]
or Hanson--Wright-type inequalities
\[
\IP\big( f(X) - \IE f(X) \ge t \big) \le C\exp\Big(- \min\Big( \frac{t^2}{a}, \frac{t}{b} \Big) \Big).
\]
As both inequalities show two different levels of tail decay (the Gaussian one for $t \le ab^{-1}$ and an exponential one for $t > ab^{-1}$), we use the terminology of Adamczak (see \cite{ABW17, AKPS18}) and call inequalities of these type \emph{two-level deviation inequalities}. If a similar estimate holds for $-f(X)$ as well, we refer to these as two-level \emph{concentration} inequalities.

The purpose of this note is to give a unified treatment of some of the existing literature on two-level deviation and concentration inequalities by showing that these are implied by a modified logarithmic Sobolev inequality (mLSI for short). We prove a general theorem providing two-level deviation and concentration inequalities in various frameworks. In particular, in Section \ref{section:Applications}, we get back and partially improve a number of earlier results like \cite{BCG17} and \cite{GS16}. 

We work in a general framework which was introduced in \cite{BG99}. Consider a probability space $(\Omega, \mathcal{F}, \mu)$ and let $\IE_\mu f$ denote the expectation of a random variable $f$ with respect to $\mu$. An operator $\Gamma$ on a class $\mathcal{A}$ of bounded, measurable functions is called a \emph{difference operator}, if
\begin{enumerate}
	\item for all $f \in \mathcal{A}$, $\Gamma(f)$ is a non-negative measurable function,
	\item for all $f \in \mathcal{A}$ and $a \ge 0, b \in \IR$ we have $af + b \in \mathcal{A}$ and $\Gamma(af+b) = a \Gamma(f)$.
\end{enumerate}
At first reading, one can think of $\Gamma(f) = \abs{\nabla f}$ in the setting $\Omega = \IR^n$. However, we want to stress that we do not require $\Gamma$ to satisfy a chain rule, and $\Gamma$ does not need to be an operator in the language of functional analysis.

We say that $\mu$ satisfies a $\Gamma\mathrm{-mLSI}(\rho)$ for some $\rho > 0$, if for all $f \in \mathcal{A}$ we have
\begin{equation} \label{eqn:modLSIdef}
\mathrm{Ent}_{\mu}(e^f) \le \frac{\rho}{2} \IE_\mu \Gamma(f)^2 e^f,
\end{equation}
where $\mathrm{Ent}_{\mu}(f) = \IE_\mu f \log f - \IE_\mu f \log( \IE_\mu f)$ ($f \ge 0$) is the entropy functional. This functional inequality is well-known in the theory of concentration of measure and has been used in various works, see \cite{BG99} and the references therein.
It is well-known that if $\mu$ satisfies a $\Gamma\mathrm{-mLSI}(\rho)$, we have for any function $f \in \mathcal{A}$ such that $\Gamma(f) \le 1$,
\begin{equation}\label{firstorder}
\mu( f - \IE_\mu f \ge t ) \le \exp\Big( - \frac{t^2}{2\rho} \Big),
\end{equation}
which is a classical first order concentration of measure result yielding subgaussian concentration (cf. \eqref{eqn:rechteFlanken}). It is not hard to see that the same holds for $-f$ if $\Gamma(af) = \abs{a}\Gamma(f)$ for all $a \in \mathbb{R}$. Our first goal is to establish second order analogues of \eqref{firstorder}.

\subsection{Two-level concentration inequalities}
Our first set of results are two-level deviation inequalities for probability measures satisfying a modified logarithmic Sobolev inequality. 

\begin{theorem}\label{theorem:SecondOrderConcentration}
	Assume that $\mu$ satisfies a $\Gamma\mathrm{-mLSI}(\rho)$ for some difference operator $\Gamma$ and $\rho > 0$. Let $f, g: \Omega \to \IR$ be two measurable functions such that $\Gamma(f) \le g$ and $g$ is sub-Gaussian, i.\,e. for some $c > 0$, $C \ge 1$ and $K > 0$
	\begin{equation}\label{eqn:gSubGaussian}
	\mu(g \ge c + t) \le C\exp\Big( - \frac{t^2}{2K^2} \Big).
	\end{equation}
	Then for all $t \ge 0$ it holds
	\begin{equation}\label{eqn:devine}
	\mu\big( f - \IE_\mu f \ge t \big) \le \frac{4C}{3} \exp\Big(- \frac{1}{8} \min\Big( \frac{t^2}{\rho c^2}, \frac{t}{\sqrt{\rho} K} \Big) \Big).
	\end{equation}
	If moreover $\Gamma(af) = \abs{a} \Gamma(f)$ for all $a \in \IR$, we have
	\begin{equation}\label{eqn:conine}
	\mu\big( \abs{f - \IE_\mu f} \ge t \big) \le 2C \exp\Big(- \frac{1}{12} \min\Big( \frac{t^2}{\rho c^2}, \frac{t}{\sqrt{\rho} K} \Big) \Big).
	\end{equation}
\end{theorem}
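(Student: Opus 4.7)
The plan is to run a Herbst-type Chernoff argument on the moment generating function $H(\lambda) := \IE_\mu e^{\lambda(f - \IE_\mu f)}$ for $\lambda \ge 0$. Applying the $\Gamma$-mLSI \eqref{eqn:modLSIdef} to $\lambda f$, the homogeneity of $\Gamma$ (valid for $\lambda \ge 0$) together with the hypothesis $\Gamma(f) \le g$ immediately yields
\[
\mathrm{Ent}_\mu(e^{\lambda f}) \le \frac{\rho \lambda^2}{2}\, \IE_\mu g^2 e^{\lambda f}.
\]
The crux is to bound the right-hand side. First split $g^2 \le 2c^2 + 2(g-c)_+^2$; the first piece contributes $2c^2 \IE_\mu e^{\lambda f}$. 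For the second, invoke the Gibbs variational formula
\[
\IE_\mu h W \le \mathrm{Ent}_\mu(W) + \IE_\mu W \cdot \log \IE_\mu e^h, \qquad W \ge 0,
\]
with $W = e^{\lambda f}$ and $h = \alpha (g-c)_+^2$. Choosing $\alpha$ of order $1/K^2$ (concretely $\alpha = 1/(4K^2)$), the sub-Gaussian hypothesis \eqref{eqn:gSubGaussian} and a standard layer-cake computation show that $\IE_\mu e^{\alpha(g-c)_+^2}$ is bounded by a constant depending only on $C$.

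Substituting yields an inequality of the shape
\[
(1 - \kappa \rho \lambda^2 K^2)\, \mathrm{Ent}_\mu(e^{\lambda f}) \le \rho \lambda^2 B \cdot \IE_\mu e^{\lambda f},
\]
with an explicit $\kappa$ and with $B$ equal to $c^2$ up to a bounded additive correction. Restricting to $\lambda \in [0, \lambda^*]$ where $\lambda^* \sim 1/(\sqrt{\rho}\,K)$ keeps the bracket on the left bounded below by $1/2$. Dividing by $\lambda^2 H(\lambda)$ and using the identity $\mathrm{Ent}_\mu(e^{\lambda f}) = \lambda H'(\lambda) - H(\lambda)\log H(\lambda)$ converts this into a Herbst-type differential inequality for $\log H(\lambda)/\lambda$, whose integration from $0$ (where the limit is $\IE_\mu f$) produces
\[
\log H(\lambda) - \lambda \IE_\mu f \le D \rho \lambda^2, \qquad \lambda \in [0, \lambda^*],
\]
for an explicit constant $D$.

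The deviation bound \eqref{eqn:devine} then follows by Chernoff's inequality $\mu(f - \IE_\mu f \ge t) \le \exp(-\lambda t + D\rho \lambda^2)$, optimized over $\lambda \in [0, \lambda^*]$: the unconstrained minimizer $\lambda = t/(2D\rho)$ yields the Gaussian bound $\exp(-t^2/(4D\rho))$ when it lies in the admissible range (small-$t$ regime), while the boundary choice $\lambda = \lambda^*$ yields the exponential bound $\exp(-\lambda^* t/2) \sim \exp(-t/(\sqrt{\rho}\,K))$ in the large-$t$ regime; the $\min$ in \eqref{eqn:devine} is exactly this union. For the two-sided bound \eqref{eqn:conine}, the added homogeneity $\Gamma(af) = \abs{a}\Gamma(f)$ gives $\Gamma(-f) = \Gamma(f) \le g$, so the same argument applies to $-f$; a union bound, together with a slight re-tuning of $\alpha$ and $\lambda^*$, converts the prefactors from $(4C/3, 1/8)$ to $(2C, 1/12)$. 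The main technical obstacle is the variational step: one must coordinate the choice of $\alpha$ and of the cutoff $\lambda^*$ so that the entropy term appearing on the right is absorbed cleanly into the left, while the Gaussian regime (governed by $c^2$) and the exponential regime (governed by $K$) emerge cleanly separated with reasonable constants.
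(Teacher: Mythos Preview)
Your approach is correct in spirit and would yield a two-level bound of the stated form, but it differs from the paper's proof and makes the bookkeeping harder than necessary.

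The paper does \emph{not} run a Herbst argument from scratch. Instead it invokes \cite[Theorem~2.1]{BG99} (Theorem~\ref{theorem:BG99} here), which already packages the Herbst step into the MGF bound
\[
\IE_\mu e^{\lambda(f-\IE_\mu f)} \le \IE_\mu e^{\rho\lambda^2\Gamma(f)^2} \le e^{2\rho\lambda^2 c^2}\,\IE_\mu e^{2\rho\lambda^2(g-c)_+^2}.
\]
The second factor is then controlled by a layer-cake computation from the tail assumption, giving the multiplicative prefactor $C/(1-4\rho\lambda^2K^2)$. Because this factor is \emph{multiplicative}, the $c^2$ (Gaussian) and $K$ (exponential) contributions separate cleanly, and the choices $\lambda=t/(4\rho c^2)$ or $\lambda=1/(4\sqrt{\rho}K)$ directly produce the constants $4C/3$ and $1/8$.

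Your route via the Gibbs variational principle is essentially re-deriving a variant of Theorem~\ref{theorem:BG99} inside the proof. It works, but the additive correction you flag in $B$ is $4K^2\log(1+C)$, which is \emph{not} uniformly bounded: it depends on $K$ and $C$. Left as is, it contaminates the denominator of the Gaussian term. You can repair this by writing the corresponding piece of the MGF bound as $(1+C)^{\,O(\rho\lambda^2K^2)}$ and treating it as a prefactor bounded on the range $\lambda\le\lambda^*$; this restores the clean $c^2$ denominator but yields a prefactor of the form $(1+C)^a$ rather than $4C/3$. So your method gives the theorem with somewhat worse (though still universal) constants, while the paper's approach---by outsourcing the Herbst step and keeping the sub-Gaussian contribution purely multiplicative---obtains the stated ones with almost no effort.
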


One possible way to show sub-Gaussian concentration for $g$ in presence of a $\Gamma\mathrm{-mLSI}(\rho)$ is by the Herbst argument. This leads to the following corollary.

\begin{corollary}\label{corollary:concIneqAlteVersion}
	Assume that $\mu$ satisfies a $\Gamma-\mathrm{mLSI}(\rho)$ for some difference operator $\Gamma$ and $\rho > 0$. Let $f, g$ be two measurable functions such that $\Gamma(f) \le g$ and $\Gamma(g) \le b$. Then for all $t \ge 0$ we have
	\[
	\mu\big( f - \IE_\mu f \ge t \big) \le \frac{4}{3} \exp\Big(- \frac{1}{8\rho} \min\Big( \frac{t^2}{(\IE_\mu g)^2}, \frac{t}{b} \Big) \Big).
	\]
	If, again, $\Gamma(af) = \abs{a}\Gamma(f)$ for all $a \in \IR$, then the same bound holds for $-f$.
\end{corollary}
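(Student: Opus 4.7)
The corollary is a direct application of Theorem \ref{theorem:SecondOrderConcentration}, where the sub-Gaussian tail estimate \eqref{eqn:gSubGaussian} for $g$ is produced from the $\Gamma$-mLSI via the Herbst argument recalled in \eqref{firstorder}. First I would verify that $g$ satisfies a sub-Gaussian tail bound. Since $\Gamma(g) \le b$, property (2) of a difference operator applied to $g/b$ yields $\Gamma(g/b) \le 1$, so \eqref{firstorder} gives $\mu(g/b - \IE_\mu(g/b) \ge s) \le \exp(-s^2/(2\rho))$; rescaling with $s = t/b$ produces
\[
\mu\big( g \ge \IE_\mu g + t \big) \le \exp\Big(- \frac{t^2}{2 \rho b^2}\Big),
\]
which is precisely \eqref{eqn:gSubGaussian} with $c = \IE_\mu g$, $C = 1$ and $K = b\sqrt{\rho}$.

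Next I would insert these parameters into the deviation inequality \eqref{eqn:devine} of Theorem \ref{theorem:SecondOrderConcentration}. The term $\rho c^2$ becomes $\rho (\IE_\mu g)^2$, while $\sqrt{\rho}\,K = \sqrt{\rho}\cdot b\sqrt{\rho} = \rho b$, so that
\[
\mu\big( f - \IE_\mu f \ge t\big) \le \frac{4}{3} \exp\Big( - \frac{1}{8} \min\Big( \frac{t^2}{\rho (\IE_\mu g)^2}, \frac{t}{\rho b} \Big) \Big),
\]
which equals the bound claimed in the corollary after pulling the $1/\rho$ in front of the minimum.

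For the last statement, if $\Gamma$ is also homogeneous, then $\Gamma(-f) = \Gamma(f) \le g$, so the above argument applies verbatim to $-f$ (using the same $g$, which still satisfies the sub-Gaussian bound derived above), yielding the symmetric estimate.

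\textbf{Main obstacle.} There is really no substantive obstacle beyond bookkeeping of the constants: one only has to keep track of how the sub-Gaussian parameters $(c,K)$ for $g$ obtained from the Herbst argument enter \eqref{eqn:devine}, and to invoke property (2) of a difference operator to justify the rescaling $g \mapsto g/b$. The mild subtlety worth mentioning is that the assumption $\Gamma(g) \le b$ implicitly requires $g \in \mathcal{A}$, which we tacitly use when applying \eqref{firstorder} to $g/b$.
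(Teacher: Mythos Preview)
Your proof is correct and follows essentially the same route as the paper: derive the sub-Gaussian tail $\mu(g - \IE_\mu g \ge t) \le \exp(-t^2/(2\rho b^2))$ from the $\Gamma$-mLSI via the Herbst argument (the paper phrases this as applying Theorem~\ref{theorem:BG99} to $\lambda g$ and optimizing, which is exactly \eqref{firstorder} after rescaling), then plug $c = \IE_\mu g$, $C = 1$, $K = b\sqrt{\rho}$ into Theorem~\ref{theorem:SecondOrderConcentration}. The bookkeeping of constants and the final homogeneity remark are handled correctly.
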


By elementary means (cf. \eqref{eqn:constantAdjustment}), the constant $4C/3$ can be replaced by any $C' > 1$.
It is also possible to modify our proofs in order to apply \cite[Lemma 1.3]{KZ18}, which leads to an inequality of the form
\[
\mu\big( f - \IE_\mu f \ge t \big) \le \exp\Big( -c \min \Big( \frac{t^2}{\rho(\IE_\mu g)^2 + 2b^2 \rho^2}, \frac{t}{\sqrt{2} \rho b} \Big) \Big)
\]
for some absolute constant $c$ (the same one as in \cite{KZ18}). However, this is at the cost of a weaker denominator in the Gaussian term as compared to \eqref{eqn:devine}, and so we choose to present it in the form of Theorem \ref{theorem:SecondOrderConcentration}.

If the difference operator $\Gamma$ satisfies a chain rule-type condition, we obtain the following result, especially improving some of the constants above:

\begin{proposition}\label{sq-chr}
	Assume that $\mu$ satisfies a $\Gamma\mathrm{-mLSI}(\rho)$ for some $\rho > 0$ and some difference operator $\Gamma$ which satisfies $\Gamma(g^2) \le 2g \Gamma(g)$ for all positive functions $g$. Let $f \in \mathcal{A}$ be such that $\Gamma(f) \le g$ and $\Gamma(g) \le b$. For any $t \ge 0$ it holds
	\[
	\mu(f - \IE_\mu f \ge t) \le \exp\Big( -\frac{1}{4\rho}\min\Big( \frac{t^2}{2 \IE_\mu g^2}, \frac{t}{b} \Big) \Big).
	\]
	If $\Gamma$ satisfies $\Gamma(g^2) \le 2\abs{g} \Gamma(g)$ for any $g \in \mathcal{A}$, the same bound holds for $-f$.
\end{proposition}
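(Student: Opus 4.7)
The plan is to run a refined Herbst argument that applies the $\Gamma\mathrm{-mLSI}$ \emph{twice}: once to $\lambda f$ in the standard way, and a second time to $sg^2$ in order to produce a tight Laplace-transform bound for $g^2$. The chain-rule hypothesis will be invoked precisely at the second application, and is what allows the estimate to close with only $\IE_\mu g^2$ on the right. To start, I will apply $\Gamma\mathrm{-mLSI}(\rho)$ to $\lambda f$ (with $\lambda > 0$) to get $\mathrm{Ent}_\mu(e^{\lambda f}) \le \tfrac{\rho \lambda^2}{2} \IE_\mu g^2 e^{\lambda f}$, and then detach $g^2$ from $e^{\lambda f}$ via the Donsker--Varadhan formula $\alpha \IE_\mu v\, e^{\lambda f} \le \mathrm{Ent}_\mu(e^{\lambda f}) + \IE_\mu e^{\lambda f} \log \IE_\mu e^{\alpha v}$ with $v = g^2$ and the balanced choice $\alpha = \rho \lambda^2$. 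This absorbs half of the entropy and leaves $\mathrm{Ent}_\mu(e^{\lambda f}) \le \IE_\mu e^{\lambda f}\, \log \IE_\mu e^{\rho \lambda^2 g^2}$.

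The crux is then to bound $\Phi(s) := \log \IE_\mu e^{sg^2}$ using the chain rule. Applying mLSI to $sg^2$ and using $\Gamma(g^2) \le 2g\Gamma(g) \le 2bg$ gives $\mathrm{Ent}_\mu(e^{sg^2}) \le 2\rho s^2 b^2 \IE_\mu g^2 e^{sg^2}$, which rewrites in terms of $\Phi$ as $s\Phi'(s) - \Phi(s) \le 2\rho s^2 b^2 \Phi'(s)$. Setting $K(s) = \Phi(s)/s$ with $K(0^+) = \IE_\mu g^2$, this reduces to the linear differential inequality $(1 - 2\rho b^2 s) K'(s) \le 2\rho b^2 K(s)$, which I can integrate from $0$ to obtain
\[
\Phi(s) \le \frac{s\, \IE_\mu g^2}{1 - 2\rho b^2 s} \quad \text{for } s < (2\rho b^2)^{-1};
\]
in particular $\Phi(s) \le 2s\, \IE_\mu g^2$ for all $s \le (4\rho b^2)^{-1}$.

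Combining the two steps via the substitution $s = \rho \lambda^2$ (valid for $\lambda \le (2\rho b)^{-1}$) and writing $u(\lambda) = \log \IE_\mu e^{\lambda f}$, the estimate becomes $\lambda u'(\lambda) - u(\lambda) \le 2\rho \lambda^2 \IE_\mu g^2$. Dividing by $\lambda^2$ and integrating from $0$ (using $u(\lambda)/\lambda \to \IE_\mu f$) yields $u(\lambda) - \lambda \IE_\mu f \le 2\rho \lambda^2 \IE_\mu g^2$ on the admissible range. A Chernoff bound then gives $\mu(f - \IE_\mu f \ge t) \le \exp(2\rho \lambda^2 \IE_\mu g^2 - \lambda t)$; optimizing with $\lambda = t/(4\rho \IE_\mu g^2)$ when $t \le 2\IE_\mu g^2/b$, and with $\lambda = (2\rho b)^{-1}$ otherwise, produces the Gaussian and exponential regimes of the stated bound (the two match exactly at $t = 2\IE_\mu g^2/b$). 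Under the symmetric chain rule $\Gamma(g^2) \le 2|g|\Gamma(g)$, the whole argument runs verbatim for $-f$.

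The hard part will be the self-referential second step: mLSI on $sg^2$ naturally returns an $\IE g^2 e^{sg^2}$ term on the right, and only the chain rule reduces $\Gamma(g^2)^2$ to something proportional to $g^2$, closing the differential inequality for $\Phi$ at the optimal linear order $s\, \IE_\mu g^2$. Without the chain rule, one would have to rely on a cruder bound (e.g.\ via subgaussianity of $g$ obtained from $\Gamma(g) \le b$ alone), reflecting the weaker constants in Corollary~\ref{corollary:concIneqAlteVersion}. A minor technicality is to ensure that $sg^2$ lies in the admissible class $\mathcal{A}$, which may require an approximation argument depending on the concrete setting.
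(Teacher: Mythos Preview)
Your proof is correct and follows essentially the same route as the paper. The paper's proof invokes two results from \cite{BG99} as black boxes: Theorem~\ref{theorem:BG99} (with $\alpha=\rho$) to pass from $e^{\lambda f}$ to $\IE_\mu e^{\rho\lambda^2 g^2}$, and \cite[equation~(2.4)]{BG99} for the Laplace bound $\IE_\mu e^{sg^2}\le \exp\big(\tfrac{s}{1-2\rho s}\IE_\mu g^2\big)$ under the chain rule; you simply re-derive both of these in-line (the Donsker--Varadhan step is precisely the proof of Theorem~\ref{theorem:BG99}, and your differential inequality for $\Phi$ is exactly how (2.4) is established). After that, both arguments arrive at the identical MGF bound $\log\IE_\mu e^{\lambda(f-\IE_\mu f)}\le 2\rho\lambda^2\IE_\mu g^2$ on $\lambda\in[0,(2\rho b)^{-1}]$ and optimize in the same way.
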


We will see a number of examples of such difference operators all along this paper. Obviously, one example is the usual gradient, but also many difference operators involving a positive part satisfy the property in question.

In all the above results, a possible choice of $g$ is usually given by $g = \Gamma(f)$, resulting in $\IE_\mu \Gamma(f)$ in the denominator of the Gaussian term. In this case, the second condition reads as $\Gamma(\Gamma(f)) \le b$, which can be understood as a condition on an iterated (and thus second order) difference of $f$.

In fact, Theorem \ref{theorem:SecondOrderConcentration} can be understood as a Bernstein-type concentration inequality. Indeed, it is easy to see that for all $a, b > 0$ and $t \ge 0$ we have
\[
\frac{t^2}{a^2 + bt} \le \min\Big( \frac{t^2}{a^2}, \frac{t}{b} \Big) \le \frac{2t^2}{a^2 + bt}.
\]
This leads to the following corollary.

\begin{corollary}     \label{corollary:BernsteinInequality}
	In the situation of Theorem \ref{theorem:SecondOrderConcentration}, for all $t \ge 0$ we have
	\begin{equation}
	\mu\big( f - \IE_\mu f \ge t \big) \le \frac{4C}{3} \exp\Big( - \frac{t^2}{8(\rho c^2 + \sqrt{\rho}K t)} \Big).
	\end{equation}
	If $\Gamma(a f) = \abs{a} \Gamma(f)$ for all $a \in \IR$, then the same bound holds with $f$ replaced by $-f$.
\end{corollary}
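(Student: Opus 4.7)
My plan is to derive the Bernstein-type bound as a direct consequence of Theorem \ref{theorem:SecondOrderConcentration}, using the elementary two-sided inequality
\[
\frac{t^2}{a^2 + bt} \le \min\Big( \frac{t^2}{a^2}, \frac{t}{b} \Big) \le \frac{2t^2}{a^2 + bt}
\]
stated just before the corollary. Applied with $a = \sqrt{\rho}\, c$ and $b = \sqrt{\rho}\, K$, the left-hand inequality shows that the argument of the exponential in Theorem \ref{theorem:SecondOrderConcentration}, namely $\tfrac{1}{8} \min(t^2/(\rho c^2),\, t/(\sqrt{\rho} K))$, dominates $\tfrac{1}{8} \cdot t^2/(\rho c^2 + \sqrt{\rho} K t)$. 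Hence the exponential factor is at most $\exp(-t^2/(8(\rho c^2 + \sqrt{\rho} K t)))$, and Theorem \ref{theorem:SecondOrderConcentration} immediately yields the claimed Bernstein-type bound for $f - \IE_\mu f$.

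To justify the elementary step I would distinguish two cases depending on the sign of $a^2 - bt$. If $bt \le a^2$, the minimum equals $t^2/a^2$, which trivially exceeds $t^2/(a^2 + bt)$. If $bt > a^2$, the minimum equals $t/b = t^2/(bt)$, which again exceeds $t^2/(a^2 + bt)$ since $bt < a^2 + bt$. This settles the one-sided deviation bound.

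For the two-sided version under the extra assumption $\Gamma(af) = \abs{a} \Gamma(f)$ for all $a \in \IR$, I observe that $\Gamma(-f) = \Gamma(f) \le g$, so the hypotheses of Theorem \ref{theorem:SecondOrderConcentration} (and hence of the one-sided bound just derived) apply verbatim to $-f$ in place of $f$. This yields the same Bernstein-type estimate for $-(f - \IE_\mu f)$, proving the second assertion.

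There is no genuine obstacle here: the corollary is a repackaging of Theorem \ref{theorem:SecondOrderConcentration} in Bernstein form, and the entire argument reduces to the elementary case analysis above together with monotonicity of $t \mapsto e^{-t}$.
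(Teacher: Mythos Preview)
Your proof is correct and is precisely the argument the paper intends: the corollary is deduced directly from Theorem~\ref{theorem:SecondOrderConcentration} via the elementary inequality $\frac{t^2}{a^2+bt} \le \min(t^2/a^2, t/b)$ with $a^2 = \rho c^2$ and $b = \sqrt{\rho}K$, and the second assertion follows since $\Gamma(-f) = \Gamma(f) \le g$.
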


Let us remark that the use of modified LSIs allows us to prove results for some classes of measures we could not address in previous work (e.\,g. \cite{GSS18b}), e.\,g. weakly dependent measures which might not have a finite number of atoms.

Next, we show similar deviation inequalities for an important class of functions, namely \emph{self-bounded functions}. In our framework, for a difference operator $\Gamma$ we say that $f \ge 0$ is a $\Gamma-(a,b)-$self-bounded function, if \[\Gamma(f)^2 \le af + b\] 
for some constants $a, b \ge 0$. For a product measure $\mu$, there are various sources that provide deviation or concentration inequalities for self-bounded functions, see e.\,g. \cite[Theorem 2.1]{BLM00}, \cite[Th{\'e}or{\`e}me 3.1]{Rio01}, \cite[Theorem 5]{BLM03}, \cite[Corollary 1]{BBLM05}, \cite[Theorem 3.9]{Cha05}, \cite[Theorem 1]{MR06} and \cite[Theorem 1]{BLM09}. 
As many of the proofs rely on the entropy method, it is not hard to adapt them to obtain Bernstein-type deviation inequalities only requiring an mLSI, which includes many more types of measures also allowing for dependencies:

\begin{proposition} \label{proposition:SelfBounding}
	Assume that $\mu$ satisfies a $\Gamma\mathrm{-mLSI}(\rho)$ and let $f \ge 0$ be a $\Gamma-(a,b)-$self-bounded function. Then for all $t \ge 0$ we have
	\[
	\mu\big( f - \IE_{\mu}f \ge t \big) \le \exp\Big( - \frac{t^2}{2\rho(2a \IE_{\mu}f + 2b + \frac{1}{3} at)} \Big).
	\]
	If, additionally, $\Gamma(\lambda f) = \abs{\lambda} \Gamma(f)$ for all $\lambda \in \IR$, then for all $t \in [0, \IE_{\mu} f]$ it holds
	\[
	\mu\big( \IE_{\mu} f - f \ge t \big) \le \exp\Big(- \frac{t^2}{2\rho(2a \IE_{\mu}f + 2b + \frac{1}{3}at)}\Big).
	\]
\end{proposition}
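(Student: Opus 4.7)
My plan is to adapt the classical Herbst–Chernoff argument for self-bounded functions to the modified log-Sobolev setting, and then invert the resulting Bernstein-type MGF bound via an explicit Legendre–Fenchel transform. For $\lambda > 0$, I would first test the $\Gamma$-mLSI on $\lambda f$: axiom (ii) of a difference operator gives $\Gamma(\lambda f) = \lambda\Gamma(f)$, and self-boundedness gives $\Gamma(f)^2 \le af+b$, so
\[\mathrm{Ent}_\mu(e^{\lambda f}) \le \tfrac{\rho\lambda^2}{2}\bigl(a\IE_\mu fe^{\lambda f} + b\IE_\mu e^{\lambda f}\bigr).\]
Setting $L(\lambda) := \IE_\mu e^{\lambda f}$, $F(\lambda) := \log L(\lambda)$ and using $\mathrm{Ent}_\mu(e^{\lambda f}) = \lambda L'(\lambda) - L(\lambda)\log L(\lambda)$, division by $L(\lambda)$ turns the above into $\lambda F'(\lambda) - F(\lambda) \le \tfrac{\rho\lambda^2}{2}(aF'(\lambda) + b)$. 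Dividing further by $\lambda^2$, the left-hand side is $\tfrac{d}{d\lambda}(F(\lambda)/\lambda)$; integrating from $0$ to $\lambda$ (with boundary value $F'(0) = \IE_\mu f$) and rearranging yields
\[F(\lambda) - \lambda \IE_\mu f \le \psi(\lambda) := \frac{v\lambda^2/2}{1 - c\lambda},\qquad \lambda \in [0, 1/c),\]
with $v := \rho(a\IE_\mu f + b)$ and $c := \rho a/2$.

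\textbf{Chernoff and Legendre--Fenchel.} By Chernoff, $\mu(f - \IE_\mu f \ge t) \le e^{-\psi^*(t)}$, where $\psi^*$ is the Legendre transform of $\psi$. Setting $\psi'(\lambda^*) = t$ and substituting $u = c\lambda^*$ reduces to the quadratic $u^2 - 2u + \alpha/(1+\alpha) = 0$ with $\alpha := 2ct/v$, whose admissible root $u = 1 - 1/\sqrt{1+\alpha}$ leads after a short computation to
\[\psi^*(t) = \frac{2t^2}{v\bigl(1+\sqrt{1+2ct/v}\bigr)^2}.\]
To match the claimed denominator I would then show $\psi^*(t) \ge t^2/(4v + 4ct/3)$; writing $y := 2ct/v$ this amounts to $6 + y/3 \ge 2\sqrt{1+y}$, or equivalently $(3 + y/6)^2 \ge 1+y$, which simplifies to the trivially true $8 + y^2/36 \ge 0$. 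Substituting back $v$ and $c$ recovers exactly $2\rho(2a\IE_\mu f + 2b + at/3)$ in the denominator of the exponent.

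\textbf{Lower tail and main difficulty.} Under the additional assumption $\Gamma(\lambda f) = |\lambda|\Gamma(f)$ for all $\lambda \in \IR$, applying the mLSI to $-\lambda f$ with $\lambda > 0$ and repeating the Herbst computation yields
\[\log \IE_\mu e^{-\lambda f} \le \frac{-\lambda\IE_\mu f + \rho b \lambda^2/2}{1 + \rho a\lambda/2},\]
now valid for all $\lambda > 0$ since the sign flip removes the singular denominator. Using $1 + \rho a\lambda/2 \ge 1$, Chernoff gives the stronger sub-Gaussian bound $\mu(\IE_\mu f - f \ge t) \le \exp(-t^2/(2\rho(a\IE_\mu f + b)))$, from which the claimed (weaker) inequality follows, since $4\rho(a\IE_\mu f + b) + 2\rho at/3 \ge 2\rho(a\IE_\mu f + b)$. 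The restriction $t \in [0, \IE_\mu f]$ is cosmetic: for $t > \IE_\mu f$ the event is empty as $f \ge 0$. The main difficulty in the whole argument is the explicit Legendre computation and the precise reduction to the algebraic inequality $8 + y^2/36 \ge 0$; once this is in hand, all other steps are routine calculus.
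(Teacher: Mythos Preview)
Your argument is correct, but it follows a genuinely different route from the paper's.

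For the upper tail, the paper does not integrate the Herbst differential inequality directly. Instead it applies Theorem~\ref{theorem:BG99} (with $\alpha=\rho$) to $\lambda f$, which together with $\Gamma(f)^2\le af+b$ yields
\[
\IE_\mu e^{\lambda(f-\IE_\mu f)}\le e^{\rho\lambda^2(a\IE_\mu f+b)}\,\IE_\mu e^{\rho a\lambda^2(f-\IE_\mu f)};
\]
the recursion is then closed by Jensen's inequality for the concave map $x\mapsto x^{\rho a\lambda}$ (valid for $\rho a\lambda<1$), producing $\log\IE_\mu e^{\lambda(f-\IE_\mu f)}\le \rho\lambda^2(a\IE_\mu f+b)/(1-\rho a\lambda)$. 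This is exactly your $\psi$ with the pair $(v,c)$ replaced by $(2v,2c)$, and the tail bound is then read off from a quoted Legendre lemma \cite[Lemma~11]{BLM03}. Your direct Herbst integration therefore gives a log-MGF bound that is sharper by a factor of two, and it is precisely this extra room that allows your elementary algebraic estimate $\psi^*(t)\ge t^2/(4v+4ct/3)$ to land on the stated denominator $2\rho(2a\IE_\mu f+2b+at/3)$.

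For the lower tail, the paper again passes through Theorem~\ref{theorem:BG99} and then bootstraps from the upper-tail MGF estimate to reach the same sub-gamma form for $\log\IE_\mu e^{\lambda(\IE_\mu f-f)}$. Your direct computation is cleaner and yields a genuine sub-Gaussian bound valid for all $t\ge 0$, strictly stronger than the stated inequality. One minor point of exposition: the step ``using $1+\rho a\lambda/2\ge 1$'' should be applied to the \emph{centred} quantity
\[
\lambda\IE_\mu f+\log\IE_\mu e^{-\lambda f}\le \frac{\rho\lambda^2(a\IE_\mu f+b)/2}{1+\rho a\lambda/2},
\]
which is nonnegative, rather than to your displayed bound on $\log\IE_\mu e^{-\lambda f}$ itself, whose numerator $-\lambda\IE_\mu f+\rho b\lambda^2/2$ is negative for small $\lambda$ and would move the wrong way under that replacement.
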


As we show in Proposition \ref{proposition:dPlusModLSI}, product measures always satisfy an mLSI with respect to a certain $L^2$-type difference operator, which was also used in the works mentioned above. This is a well-known fact and was first proven in \cite{Ma00}.

\subsection{The symmetric group}
One example we especially discuss in this note is the symmetric group $S_n$ equipped with the uniform measure. To this end, we need some notations. We write the group operation on $S_n$ as $\tau \sigma$ for $\tau,\sigma \in S_n$, and denote by $\tau_{ij}$ the transposition of $i$ and $j$. We define two difference operators (on $\mathcal{A} = L^\infty(\pi_n) = \IR^{S_n}$) via
\begin{align*}
\Gamma(f)(\sigma)^2 &= \frac{1}{n} \sum_{i,j = 1}^n (f(\sigma) - f(\sigma \tau_{ij}))^2, \\
\Gamma^+(f)(\sigma)^2 &= \frac{1}{n} \sum_{i,j = 1}^n (f(\sigma) - f(\sigma \tau_{ij}))_+^2. 
\end{align*}
For our results, we will need that the symmetric group satisfies modified logarithmic Sobolev inequalities with respect to the two difference operators defined above:

\begin{proposition}\label{proposition:mLSIforSymmetricGroup}
	Let $(S_n, \pi_n)$ be the symmetric group equipped with the uniform measure. Then a $\Gamma\mathrm{-mLSI}(1)$ and a $\Gamma^+\mathrm{-mLSI}(2)$ hold.
\end{proposition}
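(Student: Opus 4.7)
The plan is to derive both inequalities from a single modified log-Sobolev inequality for the random transposition Markov chain on $S_n$, whose generator is $Lf(\sigma) = \frac{1}{n^2}\sum_{i,j}(f(\sigma\tau_{ij}) - f(\sigma))$. This chain is reversible with respect to $\pi_n$, and its entropy Dirichlet form reads
\[
\mathcal{E}(g,\log g) = \frac{1}{2n^2}\sum_{i,j=1}^n \IE_{\pi_n}\bigl[(g - g\circ\tau_{ij})(\log g - \log g\circ\tau_{ij})\bigr].
\]
The goal is the chain mLSI $\mathrm{Ent}_{\pi_n}(g) \le n\,\mathcal{E}(g,\log g)$, which can either be cited from the literature on mLSIs for conjugacy-invariant random walks (work of Goel, Bobkov--Tetali, Gao--Quastel) or proved by induction on $n$: using the tower decomposition $\mathrm{Ent}_{\pi_n}(e^f) = \IE[\mathrm{Ent}(e^f\mid\sigma(n))] + \mathrm{Ent}(\IE[e^f\mid\sigma(n)])$, one reduces the inner term to $\pi_{n-1}$ via the fact that $\sigma$ conditioned on $\sigma(n) = k$ is uniform on bijections $[n-1]\to[n]\setminus\{k\}$, and controls the outer term via an mLSI on the uniform measure on $[n]$ whose \emph{directions} correspond precisely to the transpositions $\tau_{jn}$ missing from the inner part. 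The base case $n = 2$ is a classical two-point mLSI that one checks by direct calculus.

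With the chain mLSI in hand, both claims follow from standard pointwise inequalities for $(e^a - e^b)(a - b)$. For $\Gamma$-mLSI$(1)$ one sets $g = e^f$ and applies the symmetric bound $(e^a - e^b)(a - b) \le \tfrac{1}{2}(a - b)^2(e^a + e^b)$; combined with the $\pi_n$-invariance of the map $\sigma\mapsto\sigma\tau_{ij}$ this gives
\[
\mathcal{E}(e^f, f) \le \frac{1}{2n^2}\sum_{i,j=1}^n\IE_{\pi_n}\bigl[(f - f\circ\tau_{ij})^2 e^f\bigr] = \frac{1}{2n}\IE_{\pi_n}\Gamma(f)^2 e^f,
\]
which via the chain mLSI yields $\mathrm{Ent}_{\pi_n}(e^f) \le \tfrac{1}{2}\IE_{\pi_n}\Gamma(f)^2 e^f$. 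For $\Gamma^+$-mLSI$(2)$ one instead uses the one-sided inequality $(e^a - e^b)(a-b) \le (a-b)_+^2 e^a + (a-b)_-^2 e^b$, which is immediate from $1 - e^{-t} \le t$ after a case distinction on the sign of $a - b$; the substitution $\sigma\mapsto\sigma\tau_{ij}$ makes the two resulting summands equal in expectation, producing $\mathcal{E}(e^f, f) \le \tfrac{1}{n}\IE_{\pi_n}\Gamma^+(f)^2 e^f$ and therefore $\mathrm{Ent}_{\pi_n}(e^f) \le \IE_{\pi_n}\Gamma^+(f)^2 e^f$.

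The main obstacle is obtaining the sharp constant $n$ in the chain's mLSI. Standard references give estimates of the correct order, but extracting the exact constant requires either a careful reading of those results or a direct inductive proof; in the latter case, the technical difficulty lies in matching the normalizations $1/(n-1)$ (inner, from the induction hypothesis) and $1/n$ (outer, from the mLSI on $[n]$) so that the two contributions reassemble into $\Gamma(f)^2 = \tfrac{1}{n}\sum_{i,j}(f - f\circ\tau_{ij})^2$ without loss. Once the chain mLSI is established with the sharp constant, the pointwise-inequality step is essentially routine and handles both the symmetric and positive-part versions simultaneously.
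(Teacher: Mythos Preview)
Your approach is correct and essentially identical to the paper's: the paper cites \cite[Theorem~1]{GQ03} (Gao--Quastel) for the chain mLSI $\mathrm{Ent}_{\pi_n}(e^f) \le n\,\mathcal{E}(e^f,f)$ and then applies precisely the two pointwise inequalities you list, together with the $\pi_n$-invariance of $\sigma \mapsto \sigma\tau_{ij}$. The only difference is that the paper does not discuss an inductive alternative for the chain mLSI but simply takes the sharp constant from Gao--Quastel as a black box.
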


To formulate our next result, let us recall the notion of observable diameter. In the context of $S_n$ equipped with any metric $d$, we define it by
\[
\mathrm{ObsDiam}(S_n,d) \coloneqq \max_ {\sigma \in S_n} n^{-1} \sum_{i,j} d(\sigma, \sigma \tau_{ij})^2.
\]
For some metrics, this expression can be simplified. We say that a metric is \emph{right invariant}, if for any $\pi, \sigma, \tau \in S_n$ we have $d(\pi, \sigma) = d(\pi \tau, \sigma \tau)$, and \emph{left invariant} if $d(\pi, \sigma) = d(\tau \pi, \tau \sigma)$. It is bi-invariant, if it is right and left invariant. Assuming that $d$ is left (or right) invariant, we have 
\[
\mathrm{ObsDiam}(S_n, d) = n^{-1} \sum_{i,j} d(\mathrm{id}, \tau_{ij})^2.
\]

We call a function $f: S_n \to \IR$ \emph{locally Lipschitz} with respect to $d$, if for all $\sigma \in S_n$ and $i,j \in \{1,\ldots,n\}$ we have $\abs{f(\sigma) - f(\sigma \tau_{ij})} \le d(\sigma, \sigma \tau_{ij})$.

\begin{theorem}\label{theorem:LocallyLipschitzSn}
	Let $(S_n,d)$ be the symmetric group equipped with a metric $d$ and $\pi_n$ be the uniform distribution on $S_n$. Assume that $f: S_n \to \IR$ is locally Lipschitz with respect to $d$. For all $t \ge 0$ it holds
	\begin{equation}\label{eqn:SubGaussianObsDiam}
	\pi_n(\abs{f - \IE_{\pi_n} f} \ge t) \le 2\exp\Big( - \frac{t^2}{2\mathrm{ObsDiam}(S_n,d)} \Big).
	\end{equation}
	As a consequence, we have
	\[
	\mathrm{Var}_{\pi_n}(f) \le 4 \mathrm{ObsDiam}(S_n, d).
	\]
\end{theorem}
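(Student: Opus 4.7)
The plan is to derive both statements as a direct application of Proposition~\ref{proposition:mLSIforSymmetricGroup} combined with the first-order bound \eqref{firstorder}. The key observation is that the observable diameter is precisely a uniform upper bound on $\Gamma(f)^2$ for locally Lipschitz $f$, which converts \eqref{firstorder} into the desired sub-Gaussian tail after a trivial rescaling.

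First I would note that by the very definition of local Lipschitzness, $(f(\sigma) - f(\sigma \tau_{ij}))^2 \le d(\sigma, \sigma \tau_{ij})^2$ for all $i,j,\sigma$. Summing over $(i,j)$ and dividing by $n$ yields
\[
\Gamma(f)(\sigma)^2 \le \frac{1}{n}\sum_{i,j} d(\sigma, \sigma\tau_{ij})^2 \le \mathrm{ObsDiam}(S_n,d)
\]
uniformly in $\sigma$, since the observable diameter is defined as the maximum of the right-hand side over $\sigma$. Setting $K^2 \coloneqq \mathrm{ObsDiam}(S_n,d)$, we therefore have $\Gamma(f) \le K$ on all of $S_n$.

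Next I would invoke the $\Gamma\mathrm{-mLSI}(1)$ from Proposition~\ref{proposition:mLSIforSymmetricGroup}. Since $\Gamma$ is positively homogeneous, the rescaled function $f/K$ satisfies $\Gamma(f/K) = \Gamma(f)/K \le 1$. Applying \eqref{firstorder} with $\rho = 1$ to $f/K$ and undoing the scaling gives
\[
\pi_n\bigl(f - \IE_{\pi_n} f \ge t\bigr) \le \exp\Bigl(-\frac{t^2}{2K^2}\Bigr) = \exp\Bigl(-\frac{t^2}{2\,\mathrm{ObsDiam}(S_n,d)}\Bigr).
\]
Because the present $\Gamma$ satisfies $\Gamma(af) = \abs{a}\Gamma(f)$ for every $a \in \IR$ (it is built from $L^2$ differences), the paragraph following \eqref{firstorder} grants the same bound for $-f$. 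A union bound produces the two-sided estimate \eqref{eqn:SubGaussianObsDiam} with the constant $2$.

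For the variance statement, I would simply integrate the tail bound: writing
\[
\mathrm{Var}_{\pi_n}(f) = \int_0^\infty 2t\, \pi_n\bigl(\abs{f - \IE_{\pi_n} f} \ge t\bigr)\, dt
\]
and inserting \eqref{eqn:SubGaussianObsDiam} gives $\mathrm{Var}_{\pi_n}(f) \le 4\int_0^\infty t \exp(-t^2/(2K^2))\,dt = 4K^2 = 4\,\mathrm{ObsDiam}(S_n,d)$. There is no real obstacle here; the only point worth double-checking is the positive homogeneity of the rescaling step, which is guaranteed by property~(2) in the definition of a difference operator together with the symmetry $\Gamma(-f) = \Gamma(f)$ of the specific operator used for $S_n$.
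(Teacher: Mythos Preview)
Your proposal is correct and follows essentially the same route as the paper: bound $\Gamma(f)^2$ by $\mathrm{ObsDiam}(S_n,d)$ via local Lipschitzness, invoke the $\Gamma\mathrm{-mLSI}(1)$ from Proposition~\ref{proposition:mLSIforSymmetricGroup}, and deduce the sub-Gaussian tail and then the variance bound by integrating the tails. The only cosmetic difference is that the paper re-derives the Herbst step through Theorem~\ref{theorem:BG99} with $\alpha \to \infty$ to obtain the MGF bound $\IE_{\pi_n}\exp(\lambda(f-\IE_{\pi_n}f)) \le \exp(\lambda^2 \mathrm{ObsDiam}(S_n,d)/2)$, whereas you invoke the pre-packaged consequence \eqref{firstorder} directly after rescaling; these are the same argument.
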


For example, Theorem \ref{theorem:LocallyLipschitzSn} can easily recover concentration inequalities for locally Lipschitz functions with respect to the normalized Hamming distance $d_H(\sigma, \pi) = n^{-1} \sum_{i = 1}^n \eins_{\sigma(i) \neq \pi(i)}$. In this case, $\mathrm{ObsDiam}(S_n, d_H) \le 4$. We work out further examples in Subsection \ref{subsection:examplesSymmetricGroup}.

Finally, we give a proof of Talagrand's famous concentration inequality for the convex distance for random permutations by similar means as used in the proofs of the upper results. To this end, recall that for any measurable space $\Omega$ and any $\omega = (\omega_1, \ldots, \omega_n) \in \Omega^n$, we may define the convex distance of $\omega$ to some measurable set $A \subset \Omega^n$ by
$$
d_T(\omega, A) \coloneqq \sup_{\alpha \in \mathbb{R}^n: \abs{\alpha}_2 = 1} d_\alpha(\omega, A),
$$
where
$$d_\alpha(\omega, A) \coloneqq \inf_{\omega' \in A} d_\alpha(\omega, \omega') \coloneqq \inf_{\omega' \in A} \sum_{i=1}^n \abs{\alpha_i} \eins_{\omega_i \neq \omega'_i}.$$

\begin{proposition}\label{Tcde}
	For any $A \subseteq S_n$ it holds
	\begin{equation}\label{eqn:TalagrandsInequality}
	\pi_n(A) \IE_{\pi_n} \exp\Big( \frac{d_T(\cdot, A)^2}{144} \Big) \le 1.
	\end{equation}
\end{proposition}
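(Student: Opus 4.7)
The plan is to deduce Talagrand's inequality from the $\Gamma^+$-$\mathrm{mLSI}(2)$ of Proposition \ref{proposition:mLSIforSymmetricGroup} by establishing that $F \coloneqq d_T(\cdot, A)^2$ is a $\Gamma^+$-self-bounded function and then carrying out a Herbst-type argument that exploits the vanishing of $d_T$ on $A$.

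For the self-bounding step, I would first derive a saddle-point representation. Since $A \subseteq S_n$ is finite and the unit sphere in $\IR^n$ is compact, for each $\sigma \in S_n$ there exist $\alpha^* = \alpha^*(\sigma)$ with $\abs{\alpha^*}_2 = 1$ and $\sigma^* \in A$ such that
\[
d_T(\sigma, A) = \sum_{k=1}^n \abs{\alpha^*_k}\, \eins_{\sigma(k) \ne \sigma^*(k)}.
\]
Inserting the same $\alpha^*$ into the variational definition of $d_T(\sigma \tau_{ij}, A)$ and using that $(\sigma \tau_{ij})(k)$ differs from $\sigma(k)$ only at $k \in \{i,j\}$ yields
\[
(d_T(\sigma, A) - d_T(\sigma \tau_{ij}, A))_+ \le \abs{\alpha^*_i(\sigma)} + \abs{\alpha^*_j(\sigma)}.
\]
Squaring, summing over $i,j$, and using $(a+b)^2 \le 2(a^2+b^2)$ together with $\abs{\alpha^*}_2 = 1$ gives the first-order bound $\Gamma^+(d_T(\cdot,A))(\sigma)^2 \le 4$. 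The pointwise inequality $(a^2 - b^2)_+ \le 2a(a-b)_+$ for $a,b \ge 0$ then promotes this to
\[
\Gamma^+(F)(\sigma)^2 \le 16\, d_T(\sigma, A)^2 = 16\, F(\sigma),
\]
so that $F$ is a $\Gamma^+$-$(16,0)$-self-bounded function.

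Applying the $\Gamma^+$-$\mathrm{mLSI}(2)$ to $\lambda F$ and inserting the self-bounding bound yields
\[
\mathrm{Ent}_{\pi_n}(e^{\lambda F}) \le 16 \lambda^2 \IE_{\pi_n} F\, e^{\lambda F}, \qquad \lambda > 0,
\]
which, when rewritten in terms of $H(\lambda) = \log \IE_{\pi_n} e^{\lambda F}$, gives the differential inequality $(\lambda - 16 \lambda^2) H'(\lambda) \le H(\lambda)$ and hence the Herbst-type bound $H(\lambda) \le \lambda \IE_{\pi_n} F/(1 - 16 \lambda)$ for $\lambda \in (0, 1/16)$. To upgrade this to the Talagrand form involving $\pi_n(A)^{-1}$, I would exploit the boundary constraint $d_T \equiv 0$ on $A$: applying the same $\Gamma^+$-$\mathrm{mLSI}(2)$ to $\lambda d_T$ (which is $\Gamma^+$-Lipschitz with constant at most $2$) yields the sub-Gaussian bound $\IE_{\pi_n} e^{\lambda d_T} \le \exp(\lambda \IE_{\pi_n} d_T + 4\lambda^2)$, and combining this with Markov's inequality applied to $\eins_A$ gives $\IE_{\pi_n} d_T \le 4 \sqrt{\log(1/\pi_n(A))}$, hence a quantitative bound on $\IE_{\pi_n} F$ in terms of $\pi_n(A)$.

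The main obstacle is the final balancing act: feeding the first-order sub-Gaussian control on $\IE_{\pi_n} d_T$ (and hence on $\IE_{\pi_n} F$) into the second-order Herbst bound for $H(\lambda)$, one is led to a one-parameter optimisation whose solution should yield $\lambda = 1/144$ and the Talagrand-type inequality $\pi_n(A)\, \IE_{\pi_n} e^{\lambda F} \le 1$. The delicate point is that the naive tail bounds derived from the self-bounding step alone are not strong enough in the regime $d_T \approx \sqrt{\log(1/\pi_n(A))}$, so the first-order sub-Gaussian estimate for $d_T$ must be used to tighten the argument precisely in that range.
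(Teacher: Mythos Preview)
Your self-bounding estimate $\Gamma^+(d_T(\cdot,A))^2 \le 4$ and the resulting Herbst bound
\[
\log \IE_{\pi_n} e^{\lambda F} \le \frac{\lambda}{1-16\lambda}\,\IE_{\pi_n} F, \qquad \lambda \in (0,1/16),
\]
are correct and coincide with what the paper obtains via Proposition~\ref{sq-chr} (equation~\eqref{(2.4)}). The real difficulty, as you sense, is to turn $\IE_{\pi_n} F$ into $\log(1/\pi_n(A))$, and here your argument has a gap.

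The step ``Markov's inequality applied to $\eins_A$ gives $\IE_{\pi_n} d_T \le 4\sqrt{\log(1/\pi_n(A))}$'' does not follow from the inequality $\IE_{\pi_n} e^{\lambda d_T} \le \exp(\lambda \IE_{\pi_n} d_T + 4\lambda^2)$ you derived. That bound is an \emph{upper}-tail estimate (it controls $\pi_n(d_T \ge \IE_{\pi_n} d_T + t)$), whereas the information $\pi_n(A) \le \pi_n(d_T = 0)$ is a \emph{lower}-tail event. To exploit it you would need $\IE_{\pi_n} e^{-\lambda d_T} \le \exp(-\lambda \IE_{\pi_n} d_T + C\lambda^2)$, which via the $\Gamma^+$-mLSI requires a pointwise bound on $\Gamma^+(-d_T)(\sigma)^2 = n^{-1}\sum_{i,j}(d_T(\sigma\tau_{ij}) - d_T(\sigma))_+^2$. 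Your variational argument bounds each summand by $(|\alpha^{**}_i|+|\alpha^{**}_j|)^2$ with $\alpha^{**}$ the optimiser at $\sigma\tau_{ij}$, which depends on $(i,j)$; these do not sum to a constant. The same obstruction blocks the passage from $\IE_{\pi_n} d_T$ to $\IE_{\pi_n} F = \IE_{\pi_n} d_T^2$, which is what actually appears in your Herbst bound.

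The paper closes this gap differently (Lemma~\ref{lemfTcde}): it proves a lower-tail inequality for $F$ itself, not for $d_T$. Besides the self-bounding $\Gamma^+(F/16)^2 \le F/16$ that you have, this requires the additional bounded-difference property $|F(\sigma) - F(\sigma\tau_{ij})| \le 2$, obtained from the Sion minimax representation $F(\sigma) = \inf_{\nu \in \mathcal{M}(A)} \sum_k \nu(\sigma'_k \ne \sigma_k)^2$. With both ingredients one runs a Herbst argument for $e^{-\lambda F}$ and Chebyshev's association inequality to get $\pi_n(A)\exp(\IE_{\pi_n} F/128) \le 1$; plugging this into your exponential-moment bound at $\lambda = 1/144$ finishes the proof. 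The bounded-difference step is the missing idea in your outline.
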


As compared to Talagrand's original formulation (see \cite[Theorem 5.1]{Tal95}), \eqref{eqn:TalagrandsInequality} has a weaker absolute constant 144 instead of 16. It is possible to improve our own constant a bit by invoking slightly more subtle estimates but we do not seem to arrive at 16. For product measures, an inequality similar to \eqref{eqn:TalagrandsInequality} was deduced in \cite{Tal95}, a form of which with a weaker constant was proven in \cite{BLM09} with the help of the entropy method. This was extended to weakly dependent random variables in \cite{Pau14}. However, it does not seem possible to adjust the method therein to the case of the symmetric group, and so we are not aware of any proof of either of the inequalities for the symmetric group using the entropy method. In \cite{Sam17} the author has proven the convex distance inequality for the symmetric group using weak transport inequalities.

It is possible to prove a weaker version of \eqref{eqn:TalagrandsInequality} with a somewhat better constant:

\begin{proposition} \label{proposition:Talagrand}
	Let $S_n$ be the symmetric group and $\pi_n$ be the uniform distribution on $S_n$. For any set $A \subseteq S_n$ with $\pi_n(A) \ge 1/2$ and all $t \ge 0$ we have
	\begin{equation}\label{eqn:ConvexDistanceSn}
	\pi_n(d_T(\cdot, A) \ge t) \le 2 \exp\Big( - \frac{t^2}{64} \Big).
	\end{equation}
\end{proposition}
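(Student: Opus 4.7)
The plan is to apply the $\Gamma^+\mathrm{-mLSI}(2)$ from Proposition~\ref{proposition:mLSIforSymmetricGroup} to $F(\sigma) \coloneqq d_T(\sigma, A)$ and then convert the resulting mean-centered concentration into a zero-centered tail bound using that $F = 0$ on $A$ and $\pi_n(A) \ge 1/2$.

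The first step is to show $\Gamma^+(F)(\sigma)^2 \le 4$ pointwise. Fix $\sigma \in S_n$ and let $\alpha^\ast = \alpha^\ast(\sigma)$ attain the supremum in $F(\sigma) = \sup_{\abs{\alpha}_2 = 1} d_\alpha(\sigma, A)$, which exists by compactness of the unit sphere. Since $\sum_k \abs{\alpha^\ast_k} \eins_{\sigma(k) \neq \sigma'(k)}$ differs from its value at $\sigma\tau_{ij}$ only in the $k = i$ and $k = j$ terms,
\[
F(\sigma) - F(\sigma\tau_{ij}) \le d_{\alpha^\ast}(\sigma, A) - d_{\alpha^\ast}(\sigma \tau_{ij}, A) \le \abs{\alpha^\ast_i} + \abs{\alpha^\ast_j}.
\]
Squaring the positive part and using $(\abs{\alpha^\ast_i} + \abs{\alpha^\ast_j})^2 \le 2(\alpha^{\ast 2}_i + \alpha^{\ast 2}_j)$ together with $\abs{\alpha^\ast}_2 = 1$, the sum $\sum_{i,j} (F(\sigma) - F(\sigma\tau_{ij}))_+^2$ is bounded by $4n$, giving $\Gamma^+(F) \le 2$. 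The corresponding bound for the negative part would require the optimizer $\alpha^\ast(\sigma\tau_{ij})$, which depends on $(i,j)$; this is precisely why the argument must go through $\Gamma^+$ rather than the symmetric $\Gamma$.

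Invoking the first-order concentration \eqref{firstorder} for the $\Gamma^+\mathrm{-mLSI}(2)$ applied to $F/2$ yields $\pi_n(F - \IE_{\pi_n} F \ge u) \le \exp(-u^2/16)$ for all $u \ge 0$. To pass from this mean-centered bound to one around zero, I will exploit $\pi_n(F > 0) \le 1/2$ (since $F$ vanishes on $A$). A layer-cake computation combining $\pi_n(F \ge s) \le 1/2$ with the Herbst tail, splitting at the crossover $s = \IE_{\pi_n} F + 4\sqrt{\log 2}$ of the two bounds, produces the explicit estimate $\IE_{\pi_n} F \le \sqrt{48 \log 2}$. Under this bound the quadratic $3t^2 - 8t \, \IE_{\pi_n} F + 4(\IE_{\pi_n} F)^2 + 64 \log 2$ has non-positive discriminant, equivalently $(t - \IE_{\pi_n} F)^2/16 \ge t^2/64 - \log 2$ for every $t$, and feeding this back into the Herbst bound yields $\pi_n(F \ge t) \le 2\exp(-t^2/64)$ for $t \ge \IE_{\pi_n} F$. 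For $0 \le t \le \IE_{\pi_n} F$ the trivial estimate $\pi_n(F \ge t) \le 1 \le 2\exp(-t^2/64)$ takes over, since $\IE_{\pi_n} F \le \sqrt{48 \log 2} \le 8\sqrt{\log 2}$.

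The main obstacle is this mean-to-zero conversion. The standard route via the median $M$ of $F$ (which here equals $0$) would need a two-sided tail bound to control $\IE_{\pi_n} F - M$, and such a lower-tail estimate is not directly accessible from $\Gamma^+$. The remedy is to couple the asymmetric Herbst bound with the a priori inequality $\pi_n(F > 0) \le 1/2$ in order to pin down $\IE_{\pi_n} F$ numerically; the factor-of-four degradation from $16$ (Herbst) to $64$ (statement) is exactly what allows this shift to be absorbed.
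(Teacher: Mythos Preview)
Your proof is correct and follows the paper's overall strategy: establish $\Gamma^+(d_T(\cdot,A))\le 2$, apply the $\Gamma^+\mathrm{-mLSI}(2)$ to get the Herbst tail $\pi_n(F-\IE_{\pi_n}F\ge u)\le \exp(-u^2/16)$, bound $\IE_{\pi_n}F$, and shift. The differences are in two places. First, for $\Gamma^+(F)\le 2$ you argue directly with the maximizing $\alpha^\ast(\sigma)$, while the paper passes through the Sion minimax representation $d_T(\sigma,A)=\inf_{\nu\in\mathcal M(A)}\sup_{|\alpha|_2=1}\sum_k\alpha_k\,\nu(\sigma'_k\ne\sigma_k)$; your route is a bit more elementary. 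Second, and more substantively, to bound $m\coloneqq\IE_{\pi_n}F$ the paper invokes the Poincar\'e inequality (a standard consequence of the mLSI), obtaining $\mathrm{Var}_{\pi_n}(F)\le 2\,\IE_{\pi_n}\Gamma^+(F)^2\le 8$, and then Chebyshev at $t=m$ combined with $\pi_n(F=0)\ge 1/2$ yields $m\le 4$. Your layer-cake argument is self-contained but numerically delicate: the split at $s_0=m+4\sqrt{\log 2}$ gives $m\le 4\sqrt{\log 2}+2\int_{4\sqrt{\log 2}}^\infty e^{-u^2/16}\,du$, and one then needs a Mill's-ratio-type bound on the tail integral to land below $\sqrt{48\log 2}$ (it does, but only just). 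The paper's route via Poincar\'e is cleaner and gives a sharper bound on $m$; your route avoids invoking an extra functional inequality. Both feed into the same final constant $64$, and your discriminant computation packaging the shift is equivalent to the paper's use of $(t-4)^2\ge t^2/2-16$.
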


In fact, \eqref{eqn:TalagrandsInequality} implies \eqref{eqn:ConvexDistanceSn} with a constant of 144 instead of 64.

\subsection{Slices of the hypercube}
Finally, let us discuss another model for which we are able to prove a convex distance inequality similar to \eqref{eqn:TalagrandsInequality}. Given two natural numbers $n,r$ such that $r \le n$, consider the corresponding slice of the hypercube $C_{n,r} \coloneqq \{\eta \in \{0,1\}^n \colon \sum_i \eta_i = r \}$, and denote by $\mu_{n,r}$ the uniform measure on $C_{n,r}$. On $C_{n,r}$, we define the difference operators
\begin{align*}
\Gamma(f)(\eta)^2 &= \frac{2}{n} \sum_{i<j} (f(\eta) - f(\tau_{ij}\eta))^2 = \frac{2}{n} \sum_{i,j=1}^n \eta_i(1-\eta_j) (f(\eta) - f(\tau_{ij}\eta))^2, \\
\Gamma^+(f)(\sigma)^2 &= \frac{2}{n} \sum_{i<j} (f(\eta) - f(\tau_{ij}\eta))_+^2 = \frac{2}{n} \sum_{i,j=1}^n \eta_i(1-\eta_j) (f(\eta) - f(\tau_{ij}\eta))_+^2. 
\end{align*}
Here, $\tau_{ij}\eta$ switches the $i$-th and the $j$-th coordinate of the configuration $\eta$. Up to the scaling of $2/n$, $\Gamma(f)^2$ is the generator of the so-called Bernoulli--Laplace model.

As in the previous section, a modified logarithmic Sobolev inequality holds:

\begin{proposition}\label{proposition:mLSIforBernoulliLaplace}
	For $(C_{n,r}, \mu_{n,r})$ as above, a $\Gamma\mathrm{-mLSI}(1)$ and a $\Gamma^+\mathrm{-mLSI}(2)$ hold.
\end{proposition}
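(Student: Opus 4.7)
The plan is to deduce both mLSIs on $(C_{n,r},\mu_{n,r})$ from Proposition \ref{proposition:mLSIforSymmetricGroup} via a canonical projection $\Phi \colon S_n \to C_{n,r}$. Concretely, I would set $\Phi(\sigma)_i \coloneqq \eins_{\sigma(i) \le r}$. A fiber count shows that the pushforward of $\pi_n$ under $\Phi$ is $\mu_{n,r}$, and a direct computation gives the equivariance $\Phi(\sigma\tau_{ij}) = \tau_{ij}\Phi(\sigma)$, where on the right $\tau_{ij}$ acts on configurations by exchanging the $i$-th and $j$-th entry. Given $f \colon C_{n,r} \to \IR$, I would lift it to $\tilde f \coloneqq f \circ \Phi$ on $S_n$ and apply the mLSIs from Proposition \ref{proposition:mLSIforSymmetricGroup} to $\tilde f$.

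Two identifications then do the job. First, since $\tilde f$ depends on $\sigma$ only through $\Phi(\sigma)$, the pushforward property immediately yields $\mathrm{Ent}_{\pi_n}(e^{\tilde f}) = \mathrm{Ent}_{\mu_{n,r}}(e^f)$. Second, writing $\eta = \Phi(\sigma)$ and denoting by $\Gamma_{S_n}$ the difference operator on $S_n$, the equivariance gives
\[
\Gamma_{S_n}(\tilde f)(\sigma)^2 \;=\; \frac{1}{n}\sum_{i,j=1}^n \big(f(\eta)-f(\tau_{ij}\eta)\big)^2.
\]
Terms with $\eta_i = \eta_j$ vanish, as $\tau_{ij}\eta = \eta$ in that case. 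The remaining pairs split into the two symmetric cases $\eta_i=1,\eta_j=0$ and $\eta_i=0,\eta_j=1$, which yield the same contribution; collecting them produces the factor of $2$ in the definition of $\Gamma$ and gives $\Gamma_{S_n}(\tilde f)(\sigma)^2 = \Gamma(f)(\eta)^2$. Integrating against $\pi_n$ and invoking $\Gamma_{S_n}\mathrm{-mLSI}(1)$ on $S_n$ yields $\Gamma\mathrm{-mLSI}(1)$ on $C_{n,r}$.

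For $\Gamma^+\mathrm{-mLSI}(2)$, I would repeat the argument with positive parts. The subtle point is that $(a)_+^2$ is not symmetric in the sign of $a$, so the two cases $\eta_i=1,\eta_j=0$ and $\eta_i=0,\eta_j=1$ do not a priori contribute identically. However, $\tau_{ij} = \tau_{ji}$ implies that both index orderings in the double sum produce the very same term $(f(\eta)-f(\tau_{ij}\eta))_+^2$, which reinstates the factor of $2$ and gives $\Gamma^+_{S_n}(\tilde f)(\sigma)^2 = \Gamma^+(f)(\eta)^2$. Combining this with $\Gamma^+_{S_n}\mathrm{-mLSI}(2)$ on $S_n$ concludes the proof.

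The main obstacle, if any, is the bookkeeping in matching the one-sided operators $\Gamma^+$ under the projection; once the symmetry $\tau_{ij} = \tau_{ji}$ is exploited, everything reduces to a straightforward pushforward/lift computation, and the mLSI constants $1$ and $2$ transfer unchanged from $S_n$ to $C_{n,r}$.
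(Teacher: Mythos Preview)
Your argument is correct. The pushforward and equivariance checks go through exactly as you describe, and the identity $\Gamma_{S_n}(\tilde f)(\sigma)^2=\Gamma(f)(\Phi(\sigma))^2$ (and its $\Gamma^+$ version) is a clean bookkeeping exercise once one notes that the double sum over $(i,j)$ on $S_n$ is automatically symmetric in $(i,j)$ because $\tau_{ij}=\tau_{ji}$; the ``subtle point'' you flag is therefore no more subtle for $\Gamma^+$ than for $\Gamma$.

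Your route, however, differs from the paper's. The paper does not pass through $S_n$ at all: it applies the entropy inequality of \cite[Theorem~1]{GQ03} \emph{directly} to the Bernoulli--Laplace model to obtain
\[
\mathrm{Ent}_{\mu_{n,r}}(e^f)\;\le\;\frac{1}{n\binom{n}{r}}\sum_{\eta}\sum_{i<j}(f(\tau_{ij}\eta)-f(\eta))(e^{f(\tau_{ij}\eta)}-e^{f(\eta)}),
\]
and then uses the same two elementary inequalities $(a-b)(e^a-e^b)\le\tfrac12(a-b)^2(e^a+e^b)$ and $(a-b)_+(e^a-e^b)\le(a-b)_+^2 e^a$ as in the symmetric-group proof. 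So the paper re-invokes the external input \cite{GQ03} a second time, whereas you deduce the $C_{n,r}$ result as a corollary of Proposition~\ref{proposition:mLSIforSymmetricGroup} via the projection $\Phi$. Your approach buys a conceptually satisfying reduction (the slice inherits the mLSI from $S_n$ with the same constants) and avoids a second appeal to \cite{GQ03}; the paper's approach is slightly shorter since \cite{GQ03} already covers both models.
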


Using this, we may establish a convex distance inequality by means of the entropy method again:

\begin{proposition}\label{TcdeBL}
	For any $A \subseteq C_{n,r}$ it holds
	\[
	\mu_{n,r}(A) \IE_{\mu_{n,r}} \exp\Big( \frac{d_T(\cdot, A)^2}{544} \Big) \le 1.
	\]
\end{proposition}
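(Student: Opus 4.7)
The plan is to mirror the proof of Proposition~\ref{Tcde} for the symmetric group, using the $\Gamma^+$-mLSI$(2)$ of Proposition~\ref{proposition:mLSIforBernoulliLaplace} in place of its $S_n$-counterpart. The two main ingredients will be a pointwise bound on $\Gamma^+(\lambda F^2)$ in terms of $F(\eta) \coloneqq d_T(\eta, A)$ itself, and an exponential Herbst-type integration that exploits the initial condition $F \equiv 0$ on $A$.

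For the first ingredient, fix $\eta \notin A$ and pick, by compactness, a unit vector $\alpha(\eta) \in \IR^n$ attaining the supremum in $F(\eta) = \sup_\alpha d_\alpha(\eta, A)$, together with an $\eta'(\eta) \in A$ realising the corresponding infimum. For any $i,j$ with $\eta_i = 1$ and $\eta_j = 0$, the configuration $\tau_{ij}\eta$ differs from $\eta$ only in coordinates $i$ and $j$, so using $\alpha(\eta)$ as a test vector at $\tau_{ij}\eta$ yields the one-sided Lipschitz bound $(F(\eta) - F(\tau_{ij}\eta))_+ \le |\alpha_i(\eta)| + |\alpha_j(\eta)|$. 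Combining this with $(F^2 - (F\circ\tau_{ij})^2)_+ \le 2F(F - F\circ\tau_{ij})_+$, the inequality $(|\alpha_i|+|\alpha_j|)^2 \le 2(\alpha_i^2+\alpha_j^2)$, and the slice identity
\[
\sum_{i,j}\eta_i(1-\eta_j)(\alpha_i^2 + \alpha_j^2) = (n-r)\sum_i \eta_i\alpha_i^2 + r\sum_j (1-\eta_j)\alpha_j^2 \le n
\]
(which uses $|\alpha(\eta)|_2 = 1$) produces the pointwise bound $\Gamma^+(\lambda F^2)(\eta)^2 \le 16\,\lambda^2 F(\eta)^2$. Plugging this into the $\Gamma^+$-mLSI$(2)$ gives the second-order entropy estimate
\[
\mathrm{Ent}_{\mu_{n,r}}\bigl(e^{\lambda F^2}\bigr) \le 16\lambda^2 \IE_{\mu_{n,r}} F^2 e^{\lambda F^2}.
\]

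The remaining step, which I expect to be the main obstacle, converts this entropy inequality into the claimed Talagrand-type bound. The Herbst-style differential inequality $\lambda(1-16\lambda) U'(\lambda) \le U(\lambda) \log U(\lambda)$ for $U(\lambda) \coloneqq \IE_{\mu_{n,r}} e^{\lambda F^2}$ does not by itself bring the factor $\mu_{n,r}(A)$ into play; to produce it one exploits that $F \equiv 0$ on $A$ in tandem with the first-order sub-Gaussian concentration of $F$ around its mean. The latter follows from the crude bound $\Gamma^+(F)^2 \le 4$ (obtained from the same computation as above but without the factor $2F$ coming from the squaring step) via the Herbst argument applied to the $\Gamma^+$-mLSI$(2)$. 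Balancing the two estimates and carefully tracking the constants through the integration, exactly as in the proof of Proposition~\ref{Tcde}, then yields the inequality with constant $544$.
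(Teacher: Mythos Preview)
Your first ingredient is fine, and in fact sharper than the paper's: the paper only records $\Gamma^+(d_T(\cdot,A))^2 \le 8$, while your slice identity gives $\le 4$. The entropy bound $\mathrm{Ent}(e^{\lambda F^2}) \le 16\lambda^2\,\IE F^2 e^{\lambda F^2}$ and its Herbst integration are exactly the content of inequality~\eqref{(2.4)}, so up to this point you are on the same track as the paper (and would in principle end up with a smaller constant than $544$).

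The gap is in the second step. The proof of Proposition~\ref{Tcde} does \emph{not} bring in the factor $\pi_n(A)$ via ``first-order sub-Gaussian concentration of $F$ around its mean''; it invokes Lemma~\ref{lemfTcde} (here Lemma~\ref{lemfTcdeBL}), which is a lower-tail self-bounding inequality for $f = F^2/32$ yielding the precise bound $\mu_{n,r}(A)\exp(\IE_{\mu_{n,r}} F^2/256) \le 1$. Your proposed substitute fails on two counts. First, the Herbst argument with $\Gamma^+(F)^2 \le 4$ and the $\Gamma^+$-mLSI$(2)$ controls only the \emph{upper} tail of $F$; it says nothing about $\mu_{n,r}(F=0)=\mu_{n,r}(A)$, since $\Gamma^+(-F)$ involves the maximizing $\alpha$ at $\tau_{ij}\eta$ (which varies with $i,j$) and is not uniformly bounded. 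Second, even if you had a two-sided bound (say via the $\Gamma$-mLSI$(1)$ and Poincar\'e), it would only give $\mu_{n,r}(A) \le C\exp(-c(\IE F)^2)$, whereas the Herbst integration of your entropy bound produces $\IE e^{\lambda F^2} \le \exp(c'\lambda\,\IE F^2)$. Since $\IE F^2 = (\IE F)^2 + \mathrm{Var}(F)$ and $\mathrm{Var}(F)$ is only bounded by a constant, the product $\mu_{n,r}(A)\,\IE e^{\lambda F^2}$ is bounded by a constant strictly larger than $1$; you cannot get the $\le 1$ required by the statement. (This weaker route is essentially what the paper does for the weaker Proposition~\ref{proposition:Talagrand}.)

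What is missing is the argument of Lemma~\ref{lemfTcdeBL}: one must also verify the bounded-differences condition $\abs{F^2(\eta) - F^2(\tau_{ij}\eta)} \le 32$ (which uses the Sion--minimax representation of $F^2$, not of $F$), and then apply the log-Sobolev inequality in the form $\mathrm{Ent}(g) \le \tfrac{2}{n}\sum_{i<j}\IE (g\circ\tau_{ij}-g)(\log g\circ\tau_{ij}-\log g)_+$ to $g=e^{-\lambda F^2/32}$. This, together with Chebyshev's association inequality, yields $\mu_{n,r}(A)\exp(\IE F^2/256) \le 1$, which is the piece that combines with \eqref{(2.4)} to give the constant $544$.
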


\subsection{Outline}
In Section \ref{section:Applications} we provide various applications and concentration inequalities. This includes examples of functions on the symmetric group (Section \ref{subsection:examplesSymmetricGroup}), concentration inequalities for multilinear polynomials in $[0,1]$-valued random variables (Section \ref{subsection:MultilinearPolynomials}), as well as consequences of Theorem \ref{theorem:SecondOrderConcentration} for the Euclidean sphere and measures on $\IR^n$ satisfying a logarithmic Sobolev inequality (Section \ref{subsection:Derivations}) and for probability measures (on general spaces) satisfying an mLSI with respect to some ``$L^2$ difference operator'' (see Section \ref{subsection:WeaklyDependentMeasures}). Moreover, in Section \ref{subsection:Bernstein} we recover and extend the classical Bernstein inequality for independent random variables (up to constants).

Section \ref{section:proofs} contains all the proofs, both of the results mentioned in this section as well as in Section \ref{section:Applications}.

\section{Applications}\label{section:Applications}
Let us now describe various situations which give rise to mLSIs with respect to ``natural'' difference operators, and show some consequences of the main results.

\subsection{Symmetric group}\label{subsection:examplesSymmetricGroup}
The aim of this subsection is to show how the results from Section \ref{section:Introduction} can be used to easily obtain concentration inequalities for functions on the symmetric group. In particular, we calculate many examples of statistics for which central limit theorems were proven, and show that the variance proxy of the sub-Gaussian estimate and the true variance agree (up to a constant independent of the dimension). This provides non-asymptotic concentration results, which are consistent with the limit theorems.

First, let us introduce the following natural metrics on $S_n$:
\begin{align*}
H(\pi, \sigma) &= \sum_{i = 1}^n \eins_{\pi(i) \neq \sigma(i)} \\
D(\pi, \sigma) &= \sum_{i = 1}^n \abs{\pi(i) - \sigma(i)} \\
S(\pi, \sigma) &= \Big( \sum_{i = 1}^n \abs{\pi(i) - \sigma(i)}^2 \Big)^{1/2} \\
I(\pi, \sigma) &= \min\{ k \ge 0 : \exists k \text{ adjacent transpositions from } \sigma^{-1} \text{ to } \pi^{-1} \}.
\end{align*}
Table \ref{table} collects some basic properties of $H$, $D$, $S^2$ and $I$.

{\renewcommand{\arraystretch}{1.5}
	\begin{table}[!ht]
		\centering
		\begin{tabular}{l|c|c|c|c}
			function $d$ & invariance & mean $\IE d(\mathrm{id},\cdot)$ & $\mathrm{Var}(d(\mathrm{id},\cdot))$ & limit theorem \\
			\hline
			H & bi-invariant & $n-1$ & $1$ & $n-H \Rightarrow \mathrm{Poi}(1)$ \\
			\hline
			D & right invariant & $\frac{n^2-1}{3}$ & $\frac{(n+1)(2n^2+7)}{45}$ & CLT\\
			\hline
			$S^2$ & right invariant & $\frac{n(n^2-1)}{6}$ & $\frac{n^2(n-1)(n+1)^2}{36}$ & CLT\\
			\hline
			I & right invariant & $\frac{n(n-1)}{4}$ & $\frac{n(n-1)(2n+5)}{72}$ & CLT
		\end{tabular}
		\vspace{0.5em}
		\caption{Invariance and probabilistic properties of the four functions $H$ (Hamming distance), $D$ (Spearman's footrule), $S^2$ (Spearman's rank correlation) and $I$ (Kendall's $\tau$). This table has been extracted from information in \cite[Chapter 6]{Dia88}.}
		\label{table}
\end{table}}

\begin{example}\label{Ex1}
	In this example, we calculate the observable diameters of the metrics on the symmetric group introduced above. By Theorem \ref{theorem:LocallyLipschitzSn}, this yields concentration properties for (locally) Lipschitz functions.
	\begin{enumerate}
		\item For the Hamming distance $H$ it is clear that $H(\sigma, \sigma \tau_{ij}) = 2$, which implies $\mathrm{ObsDiam}(S_n,H) = 4(n-1)$. So, Theorem \ref{theorem:LocallyLipschitzSn} recovers a concentration result from \cite{Mau79}.
		
		The resulting variance estimate is not always sharp; for example, if we consider the function $H(\cdot, \mathrm{id})$, the variance is $1$ and not of order $n$. On the other hand, the function $G = n - H(\mathrm{id}, \cdot)$ is a locally Lipschitz function with respect to $H$, which converges weakly to a Poisson random variable. As a consequence, there cannot be an $n$-independent sub-Gaussian estimate in the class of all locally Lipschitz functions.
		\item If we define for $p \in [1,\infty)$ a distance $d_p$ on $S_n$ by the induced $\ell^p$ norm
		\[
		d_p(\sigma, \pi) = \Big( \sum_{k = 1}^n \abs{\sigma(k) - \pi(k)}^p \Big)^{\frac 1 p},
		\]
		this yields $d_p(\sigma, \sigma \tau_{ij}) = 2^{1/p} \abs{\sigma(i) - \sigma(j)}$. Consequently, recalling that
		\[
		\sum_{i \ne j} (\sigma(i)-\sigma(j))^2 = \frac{n^2(n^2-1)}{6}
		\]
		for any $\sigma \in S_n$, we have
		\[
		\mathrm{ObsDiam}(S_n,d_p) = \frac{2^{2/p}}{6} n(n^2-1).
		\]
		The case $p = 1$ gives Spearman's footrule and $p = 2$ Spearman's rank correlation.
		\item Considering Kendall's $\tau$, we can readily see that for two indices $i,j$ and any $\sigma \in S_n$ it holds $I(\sigma, \sigma \tau_{ij}) \le 2\abs{\sigma(i)-\sigma(j)}$, since $\tau_{ij} \sigma^{-1}$ can be brought to $\sigma^{-1}$ by first taking $\sigma^{-1}(i)$ to its place, and then $\sigma^{-1}(j)$. So, as above, this leads to
		\[
		\mathrm{ObsDiam}(S_n, I) \le \frac{2}{3} n(n^2 -1).
		\]
		\item In a more general setting, let $\rho: S_n \to \mathrm{GL}(V)$ be a faithful, unitary representation of $S_n$ and let $\norm{\cdot}$ be a unitarily invariant norm on $\mathrm{GL}(V)$. Then $d_{\rho}(\sigma, \tau) \coloneqq \norm{\rho(\sigma) - \rho(\tau)}$ defines a bi-invariant metric on $S_n$, and in this case we have
		\[
		\mathrm{ObsDiam}(S_n, d_\rho) = n^{-1} \sum_{i,j} \norm{\mathrm{Id} - \rho(\tau_{ij})}.
		\]
	\end{enumerate}
\end{example}

\begin{example}
	Define the random variable $f(\sigma) = S^2(\sigma, \mathrm{id}) = \sum_{i = 1}^n (\sigma(i) - i)^2$. We have
	\begin{align*}
	\Gamma(f)^2(\sigma) &= n^{-1} \sum_{i,j = 1}^n (f(\sigma) - f(\sigma \tau_{ij}))^2 
	= 4n^{-1} \sum_{i,j = 1}^n (\sigma(i) - \sigma(j))^2(i-j)^2.
	\end{align*}
	If we define the matrix $A(\sigma) = (a_{ij}(\sigma))_{i,j}$ via $a_{ij}(\sigma) = (\sigma(i) - \sigma(j))(i-j)$, then the right hand side is (up to the factor $4n^{-1}$) the squared Hilbert--Schmidt norm of $A(\sigma)$. It is clear that $\abs{A(\sigma)}_{\mathrm{HS}} = \abs{A(\sigma^{-1})}_{\mathrm{HS}}$, and one can also easily see that it is invariant under right multiplication with any transposition $\tau_{kl}$. As any permutation can be written as a product of transpositions, we can evaluate it at the identity element. Consequently,
	\[
	\Gamma(f)^2(\sigma) = 4n^{-1} \sum_{i,j = 1}^n (i-j)^4 \le \frac{4}{15} n^5.
	\]
	Using \eqref{firstorder}, this leads to the concentration inequality
	\[
	\pi_n(\abs{f - \IE_{\pi_n} f} \ge t) \le 2\exp\Big( - \frac{15t^2}{8n^5} \Big).
	\]
	Actually, the term $n^5$ is natural, as the variance of $f$ is of order $n^5$ (see the table above). Incorporating the variance of $f$ into the inequality above leads to
	\[
	\pi_n(\abs{f - \IE_{\pi_n} f} \ge \mathrm{Var}(f)^{1/2} t) \le 2\exp\Big( - \frac{t^2}{19.2} \Big),
	\]
	which yields the correct tail behavior.
\end{example}

\begin{example}
	Let us consider the $1$-Lipschitz function $f(\sigma) = I(\sigma, \mathrm{id})$. For any $t \ge 0$ we have by \eqref{eqn:SubGaussianObsDiam}, $\mathrm{Var}_{\pi_n}(f) = n(n-1)(2n+5)/72$ and Example \ref{Ex1} (3)
	\[
	\pi_n(\abs{f - \IE_{\pi_n} f} \ge \mathrm{Var}_{\pi_n}(f)^{1/2} t) \le 2 \exp\Big( - \frac{t^2}{48} \Big),
	\]
	which is consistent with the central limit theorem for $f$.
\end{example}

\begin{example}
	We define the number of ascents $f(\sigma) = \sum_{j = 1}^{n-1} \eins_{\sigma(j+1) > \sigma(j)}.$ It can be easily shown that for any $i \neq j$ the number of ascents is not sensitive to transpositions in the sense that $\abs{f(\sigma) - f(\sigma \tau_{ij})} \le 2$.
	Consequently, this leads to $\Gamma(f)^2 \le 4(n-1)$, implying the concentration inequality
	\[
	\pi_n(\abs{f - \IE_{\pi_n} f} \ge t) \le 2 \exp\Big( - \frac{t^2}{8(n-1)} \Big),
	\]
	again using \eqref{firstorder}. Alternatively, this also follows from Example \ref{Ex1} (1). Again, the variance term of order $\sqrt{n}$ is of the right order, as in \cite{CKSS72} the authors have shown a central limit theorem for the number of ascents. More precisely, the sequence $g_n = (f - \IE_{\pi_n} f)/(\sqrt{(n+1)/12})$ converges to a standard normal distribution. The above calculations lead to
	\[
	\pi_n(\abs{g_n} \ge t) \le 2\exp\Big( - \frac{t^2}{96} \Big).
	\]
\end{example}

\begin{example}
	A closely related statistic is given by the sum of the ascents defined as $f(\sigma) = \sum_{j = 1}^{n-1} (\sigma_{i+1} - \sigma_i)_+$. A short calculation shows
	\[
	\Gamma(f)^2 = n^{-1} \sum_{i \neq j} (f(\sigma) - f(\sigma \tau_{ij}))^2 \le 4(n-1)^2 n^{-1} \sum_{i \neq j} = 4(n-1)^3.
	\]
	Indeed, if we let $\Delta_{i,j} \coloneqq (\sigma(i) - \sigma(j))_+$, then
	\begin{align*}
	&(f(\sigma) - f(\sigma \tau_{ij}))^2 \\
	&= (\Delta_{i,i-1} + \Delta_{i+1,i} + \Delta_{j,j-1} + \Delta_{j+1,j} - \Delta_{j,i-1} - \Delta_{i+1,j} - \Delta_{i,j-1} - \Delta_{j+1,i} )^2 \\
	&\le \max\Big(\Delta_{i,i-1} + \Delta_{i+1,i} + \Delta_{j,j-1} + \Delta_{j+1,j}, \Delta_{j,i-1} + \Delta_{i+1,j} + \Delta_{i,j-1} + \Delta_{j+1,i}\Big)^2.
	\end{align*}
	Now each of the terms $\Delta_{i,i+1} + \Delta_{i+1,i}$, $\Delta_{j,j-1} + \Delta_{j+1,j}$ is less than $(n-1)$, and the same holds true for the two other sums. Therefore this yields
	\[
	\pi_n(\abs{f - \IE_{\pi_n} f} \ge t) \le 2 \exp\Big( - \frac{t^2}{8(n-1)^3}\Big).
	\]
	\cite{Cla09} has calculated the variance of the sum of ascents, and it is of order $n^3$, which is in good accordance with the concentration inequality (again, up to the factor).
\end{example}

\begin{example}
	Given a matrix $a = (a_{ij})$ of real numbers satisfying $a_{ij} \in [0,1]$, define $f(\sigma) = \sum_{i = 1}^n a_{i,\sigma(i)}$. By elementary computations one can show $\Gamma(f)^2 \le 4f + 4 \IE_{\pi_n} f$, i.\,e. $f$ is self-bounding. As a consequence, Proposition \ref{proposition:SelfBounding} leads to
	\[
	\pi_n \big( \abs{f - \IE_{\pi_n} f} \ge t \big) \le 2\exp\Big(- \frac{t^2}{32\IE_{\pi_n} f + 8t/3} \Big).
	\]
	
	Concentration inequalities for $f$ have been proven using the exchangeable pair approach in \cite[Proposition 3.10]{Cha05} (see also \cite[Theorem 1.1]{Cha07}), with the denominator being $4\IE_{\pi_n} f + 2t$.
	
	For example, if $a$ is the identity matrix, $f$ is the number of fixed points of a random permutation, which satisfies $\IE_{\pi_n} f = 1$ for all $n \in \IN$. In this case, $f$ converges to a Poisson distribution with mean $1$ as $n \to \infty$ (see e.\,g. \cite{Dia88}).
\end{example}

\begin{example}
	Finally, consider the random variable $f(\sigma) = g(\sigma) + g(\sigma^{-1})$, where $g(\sigma) = \sum_{i = 1}^{n-1} \eins_{\sigma(i+1) > \sigma(i)}$ is the number of descents. In \cite{CD17} the authors calculated the expectation and variance of $f$ and proved a central limit theorem. As in the above example one can easily see that $\Gamma(g)^2 \le 4(n-1)$, as well as $\Gamma(g \circ \mathrm{inv})^2 \le 4(n-1)$, where $\mathrm{inv}: S_n \to S_n$ denotes the inverse map. Since $\Gamma(h_1 + h_2)^2 \le 2\Gamma(h_1)^2 + 2\Gamma(h_2)^2$ holds true for any functions $h_1, h_2$, we also have $\Gamma(f)^2 \le 16(n-1)$, implying for any $t \ge 0$
	\[
	\pi_n(\abs{f - \IE f} \ge t) \le 2\exp\Big(-\frac{t^2}{32(n-1)} \Big).
	\]
	Again, the variance is of order $\sqrt{n}$, so that it is consistent with the CLT.
\end{example}

\subsection{Multilinear polynomials in \texorpdfstring{$[0,1]$}{0,1}-random variables}\label{subsection:MultilinearPolynomials}
The aim of this section is to show Bernstein-type concentration inequalities for a class of polynomials in independent random variables with values in $[0,1]$. The functions we consider are constructed as follows: Let $H = (V, E, (w_e)_{e \in E})$ be a weighted hypergraph, such that every $e \in E$ consists of at most $k$ vertices, assume that $(X_v)_{v \in V}$ are independent, $[0,1]$-valued random variables, and set
\begin{equation}	\label{eqn:Definitionf}
f(X) = f((X_v)_{v \in V}) = \sum_{e \in E} w_e \prod_{v \in e} X_v = \sum_{e \in E} w_e X_e.
\end{equation}
Define the maximum first order partial derivative $\mathrm{ML}(f)$ as
\begin{equation} \label{eqn:MLf}
\mathrm{ML}(f) \coloneqq \sup_{v \in V} \sup_{x \in [0,1]^V} \partial_v f(x).
\end{equation}

\begin{proposition}\label{proposition:HomogeneousPolynomials}
	Let $(X_v)_{v \in V}$ be independent, $[0,1]$-valued random variables and $f: [0,1]^V \to \IR$ given as in \eqref{eqn:Definitionf}. Assume that $w_e \ge 0$ and $\abs{e} \le k$ for all $e \in E$. We have for any $t \ge 0$
	\begin{equation}\label{eqn:Prop11Inequality}
	\IP( f(X) - \IE f(X) \ge t) \le \exp\Big(- \frac{t^2}{2k \mathrm{ML}(f)(\IE f(X) + t/2)} \Big).
	\end{equation}
	Furthermore, for $t \in [0, \IE f]$ it holds
	\[
	\IP(\IE f(X) - f(X) \ge t) \le \exp\Big( - \frac{t^2}{2k\mathrm{ML}(f)} \Big).
	\]
\end{proposition}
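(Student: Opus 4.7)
The plan is to combine a self-bounding relation for $f$ with the modified logarithmic Sobolev inequality that product measures satisfy for the positive-part $L^2$-difference operator (Proposition~\ref{proposition:dPlusModLSI}), here denoted
\[
\Gamma^+(f)^2(x) = \sum_{v\in V} \sup_{y_v \in [0,1]}\bigl(f(x) - f(x_1, \ldots, y_v, \ldots, x_{\abs{V}})\bigr)_+^2.
\]

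The combinatorial core is the estimate $\Gamma^+(f)^2 \le k\,\mathrm{ML}(f)\,f$. Since $w_e \ge 0$ and $f$ is affine in each coordinate, $f$ is non-decreasing in $x_v$, and the positive-part supremum is attained at $y_v = 0$, giving $\sup_{y_v}(f(x) - f(\ldots, y_v, \ldots))_+ = x_v\,\partial_v f(x)$. Using $x_v \le 1$ and $\partial_v f(x) \le \mathrm{ML}(f)$, one gets $\Gamma^+(f)^2(x) \le \mathrm{ML}(f) \sum_v x_v\,\partial_v f(x)$, and the Euler identity for a multilinear polynomial yields $\sum_v x_v\,\partial_v f(x) = \sum_{e\in E} \abs{e}\,w_e \prod_{u \in e} x_u \le k\,f(x)$, since each edge is counted $\abs{e}\le k$ times.

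For the upper tail I would then run the Herbst argument directly (rather than invoking Proposition~\ref{proposition:SelfBounding} as a black box) in order to match the sharp constants in~\eqref{eqn:Prop11Inequality}. Applying the $\Gamma^+\mathrm{-mLSI}$ to $\lambda f$ with $\lambda>0$ and using the self-bounding relation produces the differential inequality
\[
\lambda F'(\lambda) - F(\lambda)\log F(\lambda) \le \tfrac{1}{2}\,k\,\mathrm{ML}(f)\,\lambda^2\, F'(\lambda), \qquad F(\lambda) = \IE e^{\lambda f}.
\]
Separating variables (or applying partial fractions to $(\log F)'/(\log F) \le 1/(\lambda(1 - k\mathrm{ML}(f)\lambda/2))$) and using $L(\lambda)/\lambda \to \IE f$ as $\lambda\to 0^+$, one obtains
\[
\log \IE e^{\lambda(f - \IE f)} \le \frac{k\,\mathrm{ML}(f)\,\lambda^2\,\IE f}{2 - k\,\mathrm{ML}(f)\,\lambda}
\]
for $\lambda \in [0, 2/(k\mathrm{ML}(f)))$, and the Bennett--Bernstein optimization of the Chernoff bound then produces exactly~\eqref{eqn:Prop11Inequality}. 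The lower tail follows from the analogous Herbst argument at negative $\lambda$, using the counterpart estimate for the negative-part operator (supremum attained at $y_v = 1$) together with the restriction $t \in [0,\IE f]$, which is what permits the clean Gaussian-form denominator in the statement.

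The main obstacle is matching the sharp constants in~\eqref{eqn:Prop11Inequality}: the coefficient of $t$ in the denominator is $1$ rather than the weaker $2/3$ (or larger) that emerges from directly plugging into Proposition~\ref{proposition:SelfBounding}, which forces the Herbst ODE to be solved explicitly using the precise self-bounding relation $\Gamma^+(f)^2 \le k\,\mathrm{ML}(f)\,f$ rather than the generic $\Gamma^+(f)^2 \le af + b$. Once this is done, the Bennett--Bernstein passage from MGF to tail is routine.
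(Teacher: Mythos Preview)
Your self-bounding computation $\sum_v (x_v\,\partial_v f)^2 \le k\,\mathrm{ML}(f)\,f$ is exactly the paper's combinatorial core (the paper sets $f_v(x_{v^c}) = f(x_{v^c},0)$ and runs the same chain of inequalities), and your Herbst argument for the upper tail is correct and does produce the sharp constant in~\eqref{eqn:Prop11Inequality}. The paper simply outsources that Herbst step, citing \cite[Theorem~1]{BLM09} as a black box for weakly $(k\,\mathrm{ML}(f),0)$-self-bounding functions rather than redoing the ODE. So at the level of ideas the upper-tail arguments coincide.

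One correction on the upper tail: the inequality you need is not quite Proposition~\ref{proposition:dPlusModLSI}. That proposition yields a $\dpartial^+$-mLSI with constant $2$ for product measures, where $\dpartial^+$ is the \emph{integral}-based operator; your $\Gamma^+$ is the \emph{supremum}-based operator $\mathfrak{h}^+$, and the coefficient $\tfrac12$ in your differential inequality requires an $\mathfrak{h}^+$-mLSI with constant $1$. That sharper inequality does hold for product measures---it is Massart's one-sided entropy bound $\mathrm{Ent}_\nu(e^g)\le \tfrac12\,\IE_\nu[(g-\inf g)^2 e^g]$ tensorized, and is precisely the starting point of~\cite{BLM09}---but it is not what Proposition~\ref{proposition:dPlusModLSI} states. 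With $\rho=2$ from Proposition~\ref{proposition:dPlusModLSI} you would lose a factor $2$ in the denominator.

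Your lower-tail sketch, however, has a real gap. Switching to the ``negative-part operator'' with the supremum at $y_v=1$ produces differences $(1-x_v)\,\partial_v f(x)$, and $\sum_v (1-x_v)^2(\partial_v f)^2$ is \emph{not} controlled by a multiple of $f$: at $x\equiv 0$ one has $f=0$ while the sum equals $\sum_v (\partial_v f(0))^2$, which is positive whenever there are singleton edges with positive weight. The correct route (the paper's, following~\cite{BLM09}) keeps the \emph{same} $f_v = f(x_{v^c},0)$ for the lower tail and exploits the additional pointwise bound $0\le f-f_v = x_v\,\partial_v f \le \mathrm{ML}(f)$; this bounded-increment condition is what closes the lower-tail Herbst ODE. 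Concretely, the paper rescales to $\tilde f=\mathrm{ML}(f)^{-1}f$ so that $\tilde f-\tilde f_v\in[0,1]$ and then invokes the lower-tail half of \cite[Theorem~1]{BLM09}.
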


A slight modification of the proof of Proposition \ref{proposition:HomogeneousPolynomials} also allows to prove deviation inequalities for suprema of such homogeneous polynomials. For example, this can be used to prove the following concentration inequalities for maxima or $l^p$ norms of linear forms. 

\begin{proposition}\label{proposition:Suprema01ValuedRV}
	Let $(X_v)_{v \in V}$ be independent, $[0,1]$-valued random variables, $\mathcal{F} \subset \{ a \in \IR^V : a_i \in [0,1]^n \}$ and define $f_{\mathcal{F}}(X) \coloneqq \sup_{a \in \mathcal{F}} \sum_{i \in V} a_i X_i$. For any $t \ge 0$ we have
	\[
	\IP(f_{\mathcal{F}}(X) - \IE f_{\mathcal{F}}(X) \ge t) \le \exp\Big( - \frac{t^2}{2\sup_{a \in \mathcal{F}} \norm{a}_\infty(\IE f_{\mathcal{F}}(X) + t/2 )}\Big).
	\]
	In particular, for any $p \in [1,\infty]$ it holds
	\[
	\IP(\norm{X}_p - \IE \norm{X}_p \ge t) \le \exp\Big( - \frac{t^2}{2(\IE \norm{X}_p + t/2)} \Big).
	\]
\end{proposition}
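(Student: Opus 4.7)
The plan is to reduce $f_\mathcal{F}$ to a self-bounding situation for the positive-part $L^2$-type difference operator $\Gamma^+$ on the product space $[0,1]^V$, for which Proposition \ref{proposition:dPlusModLSI} provides a modified logarithmic Sobolev inequality (with $\rho = 1$), and then to run the same Herbst/ODE argument as in the proof of Proposition \ref{proposition:HomogeneousPolynomials} in the special case $k=1$. The supremum is handled by the standard ``envelope'' trick.

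First, for each $x \in [0,1]^V$ fix a (near-)maximizer $a^\ast(x) \in \mathcal{F}$ of the linear form $a \mapsto \langle a, x\rangle$, measurability being recovered by choosing $\epsilon$-approximate maximizers and sending $\epsilon\downarrow 0$ at the end. For any index $i$ and any $x'_i \in [0,1]$, writing $\tilde x = (x_1,\ldots,x'_i,\ldots,x_{|V|})$, one has
\[
 f_\mathcal{F}(x) - f_\mathcal{F}(\tilde x) \le \langle a^\ast(x), x\rangle - \langle a^\ast(x), \tilde x\rangle = a^\ast_i(x)(x_i - x'_i),
\]
since $f_\mathcal{F}(\tilde x) \ge \langle a^\ast(x), \tilde x\rangle$. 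Taking the positive part and maximizing over $x'_i \in [0,1]$ — the supremum is attained at $x'_i = 0$ — yields $\sup_{x'_i \in [0,1]} (f_\mathcal{F}(x) - f_\mathcal{F}(\tilde x))_+ \le a^\ast_i(x)\, x_i$. Summing squares over $i$ and using $a^\ast_i(x) \le A \coloneqq \sup_{a \in \mathcal{F}} \|a\|_\infty$ together with $x_i \in [0,1]$,
\[
\Gamma^+(f_\mathcal{F})^2(x) \le \sum_{i \in V} (a^\ast_i(x)\, x_i)^2 \le A \sum_{i \in V} a^\ast_i(x)\, x_i = A\, f_\mathcal{F}(x),
\]
so $f_\mathcal{F}$ is $\Gamma^+$-$(A,0)$-self-bounding. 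At this point one can rerun the Herbst argument used for Proposition \ref{proposition:HomogeneousPolynomials} with $k=1$: differentiating $H(\lambda) = \log \IE e^{\lambda f_\mathcal{F}}$, applying the mLSI to $\lambda f_\mathcal{F}$ and inserting $\Gamma^+(f_\mathcal{F})^2 \le A f_\mathcal{F}$ produces an ODE for $H$ whose integration, followed by Chebyshev--Markov and optimization in $\lambda \ge 0$, delivers the Bernstein tail with denominator $2A(\IE f_\mathcal{F} + t/2)$.

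The $\ell^p$ consequence then follows by duality: $\|X\|_p = \sup\{\langle a, X\rangle : a \ge 0,\ \|a\|_q = 1\}$ with $1/p + 1/q = 1$ (the case $p=\infty$ being handled with $a \in \{e_1,\ldots,e_n\}$), and the constraints $a \ge 0$ and $\|a\|_q = 1$ force $\|a\|_\infty \le \|a\|_q = 1$, so the envelope constant collapses to $A = 1$ and the quoted bound drops out.

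The main obstacle I expect is matching the exact constants $2A(\IE f_\mathcal{F} + t/2)$ in the denominator: a black-box application of Proposition \ref{proposition:SelfBounding} with $a = A$, $b = 0$, $\rho = 1$ gives a denominator of the form $4A\,\IE f_\mathcal{F} + \tfrac{2}{3}At$, which trades off Gaussian and exponential regimes differently and is therefore not directly the stated inequality. Getting the stated constants forces one to redo the entropy-method/Herbst step explicitly rather than cite the generic self-bounding proposition. A secondary (minor) obstacle is the measurable selection of $a^\ast(x)$, which is dealt with by the $\epsilon$-approximate maximizer device noted above.
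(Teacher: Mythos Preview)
Your approach is correct and essentially the same as the paper's: establish a self-bounding estimate for $f_{\mathcal{F}}$ via the envelope trick, then feed it into a concentration result for self-bounding functions; the $\ell^p$ consequence via duality is also exactly the paper's argument. The one point of divergence is the final step and the constants issue you flagged. The paper's proof of Proposition~\ref{proposition:HomogeneousPolynomials} (which is invoked verbatim for the first part here) does \emph{not} actually run a Herbst/ODE argument itself; it shows that $f$ is weakly $(k\,\mathrm{ML}(f),0)$-self-bounding in the sense of \cite{BLM09} --- using the specific choice $f_v(x_{v^c}) = f(x_{v^c},0)$, i.e.\ your $x_i'=0$ --- and then cites \cite[Theorem~1]{BLM09} directly. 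That external theorem already delivers the denominator $2A(\IE f_{\mathcal{F}} + t/2)$, so the obstacle you anticipated (redoing Herbst to match constants, or the mismatch from Proposition~\ref{proposition:SelfBounding}) simply does not arise. In short: same self-bounding computation, but the sharp constants come from \cite{BLM09} rather than from the paper's own generic machinery.
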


One possible application of Proposition \ref{proposition:HomogeneousPolynomials} is to understand the finite $n$ concentration properties of the so-called \emph{d-runs on the line}.

\begin{proposition}\label{proposition:DRuns}
	Let $d \in \IN, n > d$, $(X_i)_{i = 1,\ldots,n}$ be independent, identically distributed random variables with values in $[0,1]$ and mean $\eta \coloneqq \IE X_1 > 0$. Define the random variable $f_d(X) \coloneqq \sum_{i = 1}^n X_i \cdots X_{i+d-1}$, where the indices are to be understood modulo $n$. For any $t \ge 0$ it holds
	\begin{equation}\label{eqn:ConcentrationDRuns}
	\IP\Big(f_d(X) - \IE f_d(X) \ge \sqrt{n \eta^d} t \Big) \le 2 \exp\Big( - \frac{t^2}{2d^2(1+ t/\sqrt{n\eta^d})} \Big).
	\end{equation}
\end{proposition}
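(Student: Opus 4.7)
The plan is to apply Proposition \ref{proposition:HomogeneousPolynomials} directly to $f_d$, since $f_d$ already has the form \eqref{eqn:Definitionf}. I view $f_d$ as a multilinear polynomial on the weighted hypergraph $H=(V,E,(w_e)_{e\in E})$ with $V=\{1,\dots,n\}$, edges $e_i=\{i,i+1,\dots,i+d-1\}\pmod n$ for $i=1,\dots,n$, and weights $w_{e_i}=1$. All hyperedges have cardinality exactly $d$ and the weights are non-negative, so the hypothesis of Proposition \ref{proposition:HomogeneousPolynomials} is satisfied with $k=d$.

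Next I would compute the two quantities entering the bound. By independence of the $X_v$,
\[
\IE f_d(X)=\sum_{i=1}^n \IE\Big[\prod_{v\in e_i}X_v\Big]=n\eta^d.
\]
To determine $\mathrm{ML}(f_d)$ as defined in \eqref{eqn:MLf}, fix a vertex $v\in V$. Cyclically, $v$ belongs to exactly $d$ windows, namely $e_{v-d+1},\dots,e_v$. For $x\in[0,1]^V$ one has
\[
\partial_v f_d(x)=\sum_{i:\,v\in e_i}\prod_{u\in e_i\setminus\{v\}}x_u\;\le\; d,
\]
with equality at $x=\mathbf 1$. Hence $\mathrm{ML}(f_d)=d$.

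Plugging $k=\mathrm{ML}(f_d)=d$ and $\IE f_d=n\eta^d$ into the first inequality of Proposition \ref{proposition:HomogeneousPolynomials} and then substituting $s=\sqrt{n\eta^d}\,t$, I obtain
\[
\IP\big(f_d-\IE f_d\ge\sqrt{n\eta^d}\,t\big)\le\exp\!\Big(-\frac{n\eta^d t^2}{2d^2(n\eta^d+\sqrt{n\eta^d}\,t/2)}\Big)=\exp\!\Big(-\frac{t^2}{2d^2(1+t/(2\sqrt{n\eta^d}))}\Big).
\]
Since $1+t/(2\sqrt{n\eta^d})\le 1+t/\sqrt{n\eta^d}$, the exponent above is bounded by the claimed one; the extra factor $2$ in \eqref{eqn:ConcentrationDRuns} is free slack. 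The argument is essentially a bookkeeping application of Proposition \ref{proposition:HomogeneousPolynomials}, and there is no real obstacle: the only things that require care are the identification of the cyclic hyperedge structure and the verification that each vertex lies in exactly $d$ windows, which are both elementary.
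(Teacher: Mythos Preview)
Your proof is correct and follows essentially the same route as the paper: both identify $f_d$ with the hypergraph form \eqref{eqn:Definitionf}, compute $\mathrm{ML}(f_d)=d$ and $\IE f_d=n\eta^d$, and then invoke Proposition \ref{proposition:HomogeneousPolynomials} followed by the substitution $s=\sqrt{n\eta^d}\,t$. You are also right that the resulting bound is slightly sharper than \eqref{eqn:ConcentrationDRuns}; the stated inequality simply discards a factor $1/2$ in the denominator and inserts a harmless constant $2$ in front.
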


In \cite[Theorem 4.1]{RR09}, the authors prove a CLT for the $d$-runs on the line for Bernoulli random variables $X_i$ with success probability $p$, by normalizing $f$ by $ n^{-1/2} p^{-d/2}$. This is also the reason for the choice $n^{1/2} \eta^{d/2} t$ in inequality \eqref{eqn:ConcentrationDRuns}. In other words, under the assumption $n \eta^d \to \infty$ as $n \to \infty$, Proposition \ref{proposition:DRuns} yields sub-Gaussian tails for $n^{-1/2} \eta^{-d/2}f$. This is in good accordance with the aforementioned CLT.

Moreover, note that in this example, our methodology leads to better results than the usual bounded difference inequality. Indeed, the latter only yields
\[
\IP(\abs{f_d(X) - \IE f_d(X)} \ge t) \le 2 \exp\Big( -\frac{2 t^2}{nd^2}\Big),
\]
suggesting an (inaccurate) normalization of $f(X)$ by $n^{-1/2}$.

\begin{example}
	If $(X_v)_{v \in E(K_n)}$ is the Erd{\"o}s--R{\'e}nyi model with parameter $p$, for any fixed graph $H$ with $\abs{V}$ vertices and $\abs{E}$ edges, the subgraph counting statistic $T_H$ can be written in the form \eqref{eqn:Definitionf} with $w_e = 1$, and $k = \abs{E}$. Furthermore, it is easy to see that $\mathrm{ML}(f) \le n^{\Delta-1}$ for the maximum degree $\Delta$, so that Proposition \ref{proposition:HomogeneousPolynomials} yields
	\[
	\IP(T_H(X) - \IE T_H(X) \ge \epsilon \IE T_H(X)) \le \exp\Big( - C_{k,\epsilon} n^{\abs{V} - \Delta + 1} p^{\abs{E}} \Big).
	\]
	For example, this gives nontrivial bounds in the triangle case whenever $n^2 p^3 \to \infty$ as $n \to \infty$. This bound is suboptimal, as the optimal decay is known to be $np \to \infty$, see \cite{Cha12, DK12a}. However, it is better than the bound obtained by the bounded differences inequality.
	In general, if we consider subgraph counting statistics for some subgraph $H$ with $v$ vertices and $e$ edges on an Erd{\"o}s--R{\'e}nyi model $(X_v)_{v \in E(K_n)}$, the bounded difference inequality yields the estimate
	\[
	\IP(f(X) - \IE f(X) \ge \epsilon \IE f(X)) \le \exp\Big( - C_{\epsilon, H} \frac{n^{2\abs{V}} p^{2\abs{E}}}{n^2 n^{2\Delta -2}} \Big).
	\]
	Thus, to obtain non-trivial estimates in the limit $n \to \infty$, one has to assume that $n^{\abs{V} - \Delta} p^{\abs{E}} \to \infty$. With the above inequality, this can be weakened to $n n^{\abs{V} - \Delta} p^{\abs{E}} \to \infty$.
\end{example}

\subsection{Derivations}\label{subsection:Derivations}
If $\Gamma$ satisfies the chain rule, i.\,e. for all differentiable $u: \IR \to \IR$ and $f \in \mathcal{A}$ such that $u \circ f \in \mathcal{A}$ we have $\Gamma(u \circ f) = \abs{u' \circ f} \Gamma(f)$, then \eqref{eqn:modLSIdef} is equivalent to the usual logarithmic Sobolev inequality (in short: $\Gamma\mathrm{-LSI}(\rho)$)
\[
\mathrm{Ent}_{\mu}(f^2) \le 2\rho \IE_\mu \Gamma(f)^2.
\]
Using this, one can derive second order concentration inequalities similar to the ones given in \cite{BCG17} from Proposition \ref{sq-chr}. Let $S^{n-1} \coloneqq \{x \in \IR^n : \abs{x} = 1 \}$ be the unit sphere equipped with the uniform measure $\sigma_{n-1}$. It is known that for $\rho_n \coloneqq (n-1)^{-1}$
\begin{equation}
\mathrm{Ent}_{\sigma_{n-1}}(e^f) \le \frac{\rho_n}{2} \IE_{\sigma_{n-1}} \abs{\nabla_S f}^2 e^f
\end{equation}
holds for all Lipschitz functions $f$ and the spherical gradient $\nabla_S f$ (see \cite[Formula (3.1)]{BCG17} for the logarithmic Sobolev inequality, from which the modified one follows as above). To state our next result, we introduce the following notation (which we will stick to for the rest of this paper): if $A$ is an $n \times n$ matrix, we denote by $\norm{A}_\mathrm{HS}$ its Hilbert--Schmidt and by $\norm{A}_\mathrm{op}$ its operator norm.

\begin{proposition}
	Consider $S^{n-1}$ equipped with the uniform measure $\sigma_{n-1}$ and let $f: S^{n-1} \to \IR$ be a $C^2$ function satisfying $\sup_{\theta \in S^{n-1}} \norm{f_S''(\theta)}_{\mathrm{op}} \le 1$. For any $t \ge 0$
	\[
	\sigma_{n-1}\big( \abs{ f - \IE_{\sigma_{n-1}} f} \ge t \big) \le 2 \exp\Big( - \frac{1}{4\rho_n} \min\Big( \frac{t^2}{2\IE_{\sigma_{n-1}} \abs{\nabla_S f}^2}, t \Big) \Big).
	\]
\end{proposition}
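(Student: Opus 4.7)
The plan is to invoke Proposition \ref{sq-chr} with the difference operator $\Gamma(f) = \abs{\nabla_S f}$, the choice $g = \abs{\nabla_S f}$, and constant $b = 1$. The required $\Gamma\mathrm{-mLSI}(\rho_n)$ is exactly the displayed inequality above Proposition \ref{sq-chr}'s invocation. Moreover, $\Gamma(f) \le g$ holds trivially with equality.

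To verify the hypothesis $\Gamma(g^2) \le 2g\Gamma(g)$, I would use that the spherical gradient is a genuine derivation: for every smooth $u\colon \IR \to \IR$ and every $C^1$ function $h$ on $S^{n-1}$ one has $\nabla_S (u\circ h) = (u'\circ h)\,\nabla_S h$, hence $\abs{\nabla_S(u\circ h)} = \abs{u'\circ h}\abs{\nabla_S h}$. Applied to $u(y) = y^2$ and $h = g$, this gives $\Gamma(g^2) = 2g\,\Gamma(g)$, and the analogous formula with $\abs{g}$ in place of $g$ holds for arbitrary $g \in \mathcal{A}$, which is what is needed for the two-sided conclusion of Proposition \ref{sq-chr}.

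The substantive step is the bound $\Gamma(g) \le 1$, i.e.\ the pointwise estimate $\abs{\nabla_S \abs{\nabla_S f}} \le \norm{f_S''}_{\mathrm{op}}$. Where $\nabla_S f \neq 0$, applying the chain rule above to $u(y) = \sqrt{y}$ and $h = \abs{\nabla_S f}^2$ yields $\nabla_S g = f_S''\, \nabla_S f / \abs{\nabla_S f}$, so that
\[
\abs{\nabla_S g} \le \norm{f_S''}_{\mathrm{op}} \cdot \frac{\abs{\nabla_S f}}{\abs{\nabla_S f}} \le 1
\]
by the hypothesis on $f$; on the open set $\{\nabla_S f = 0\}$, $g$ vanishes identically on each component so $\abs{\nabla_S g} = 0$, and the remaining boundary set has measure zero so can be handled by a standard mollification argument. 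This is the only place where the geometry of the sphere and the $C^2$ hypothesis on $f$ enter.

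With all hypotheses of Proposition \ref{sq-chr} verified (including the $\abs{g}$-version of the chain rule, which delivers the one-sided bound for $-f$), we obtain
\[
\sigma_{n-1}(\pm(f - \IE_{\sigma_{n-1}} f) \ge t) \le \exp\Bigl(-\tfrac{1}{4\rho_n}\min\bigl(\tfrac{t^2}{2\IE_{\sigma_{n-1}}\abs{\nabla_S f}^2}, t\bigr)\Bigr),
\]
and a union bound contributes the factor $2$ in the final inequality. The main potential obstacle is handling the non-smoothness of $y\mapsto\sqrt y$ at zero when computing $\nabla_S g$; I would deal with this by applying the estimate to $g_\varepsilon = \sqrt{\abs{\nabla_S f}^2 + \varepsilon}$ (for which the chain rule is unambiguous) and letting $\varepsilon \downarrow 0$, observing that Proposition \ref{sq-chr} is stable under this limit because both sides of the mLSI evaluation are continuous in $\varepsilon$.
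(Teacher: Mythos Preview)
Your proposal is correct and follows exactly the same route as the paper: apply Proposition~\ref{sq-chr} with $\Gamma = \abs{\nabla_S\,\cdot\,}$, $g = \abs{\nabla_S f}$, $b = 1$, and use the pointwise bound $\abs{\nabla_S\abs{\nabla_S f}} \le \norm{f_S''}_{\mathrm{op}}$. The only difference is cosmetic: the paper outsources this last inequality to \cite[Lemma 3.1]{BCG17}, whereas you sketch its proof (including the $\varepsilon$-regularization for the square root), which is perfectly fine.
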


This follows immediately from Proposition \ref{sq-chr} and the inequality $\abs{\nabla_S \abs{\nabla_S f}} \le \norm{f_S''}_{\mathrm{op}}$ proven in \cite[Lemma 3.1]{BCG17}. Now, if $f$ is $C^2$ and orthogonal to all affine functions (in $L^2(\sigma_{n-1})$), \cite[Proposition 5.1]{BCG17} shows
$
\IE_{\sigma_{n-1}} \abs{\nabla_S f}^2 \le \rho_n \IE_{\sigma_{n-1}} \norm{f''_S}_{\mathrm{HS}}^2.
$
So, if we additionally have $\IE_{\sigma_{n-1}} \norm{f''}_{\mathrm{HS}}^2 \le b^2$, the estimate
\begin{equation}\label{twolevelsphere}
\sigma_{n-1}\big( (n-1) \abs{f - \IE_{\sigma_{n-1}} f} \ge t \big) \le 2 \exp \Big( - \frac{1}{4} \min\Big( \frac{t^2}{2 b^2}, t \Big) \Big)
\end{equation}
follows.

In a similar manner, one may address open subsets of $\mathbb{R}^n$ equipped with some probability measure $\mu$ satisfying a logarithmic Sobolev inequality (with respect to the usual gradient $\nabla$). This situation has been sketched in \cite[Remark 5.3]{BCG17} and was discussed in more detail in \cite{GS16}. Here we easily obtain the following result:

\begin{proposition}
	Let $G \subseteq \mathbb{R}^n$ be an open set, equipped with a probability measure $\mu$ which satisfies a $\nabla\mathrm{-LSI}(\rho)$, and let $f: G \to \IR$ be a $C^2$ function satisfying $\sup_{x \in G} \norm{f''(x)}_{\mathrm{op}} \le 1$. For any $t \ge 0$
	\[
	\mu\big( \abs{ f - \IE_{\mu} f} \ge t \big) \le 2 \exp\Big( - \frac{1}{4\rho} \min\Big( \frac{t^2}{ 2\IE_{\mu} \abs{\nabla f}^2}, t \Big) \Big).
	\]
\end{proposition}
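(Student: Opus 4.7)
The plan is to mirror the sphere argument, but in the Euclidean setting the usual gradient $\nabla$ plays the role of $\nabla_S$ and the ambient Hessian $f''$ replaces $f''_S$. First, since $\nabla$ satisfies the full chain rule ($\nabla(u\circ f) = (u'\circ f)\nabla f$, hence $|\nabla(u\circ f)| = |u'\circ f|\,|\nabla f|$), the hypothesis that $\mu$ satisfies a $\nabla$-LSI$(\rho)$ is equivalent to the modified inequality $\mathrm{Ent}_\mu(e^f) \le \tfrac{\rho}{2}\IE_\mu|\nabla f|^2 e^f$, i.e.\ a $\nabla$-mLSI$(\rho)$ with difference operator $\Gamma(f) = |\nabla f|$. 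Moreover, $\Gamma$ satisfies the square chain-rule inequality $\Gamma(g^2) = |\nabla g^2| = 2|g|\,|\nabla g| = 2|g|\Gamma(g)$ for every $g \in \mathcal{A}$, so all the hypotheses of Proposition~\ref{sq-chr} (including the two-sided version) are available.

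Next, I choose $g \coloneqq |\nabla f|$ as the upper bound for $\Gamma(f)$; trivially $\Gamma(f) = g$. The remaining task is to control $\Gamma(g) = |\nabla |\nabla f||$ in terms of the Hessian. This is the standard pointwise bound $|\nabla|\nabla f|| \le \|f''\|_{\mathrm{op}}$, which one proves by writing, at any point $x$ where $\nabla f(x) \neq 0$, $\nabla|\nabla f|(x) = f''(x)\,\nabla f(x)/|\nabla f(x)|$, so that $|\nabla|\nabla f|(x)| \le \|f''(x)\|_{\mathrm{op}}$; the set where $\nabla f = 0$ is handled by a standard mollification or by noting that $|\nabla f|$ has zero generalized gradient there a.e. (this is exactly the Euclidean analogue of \cite[Lemma~3.1]{BCG17} that was invoked on the sphere). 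Under the assumption $\sup_{x \in G}\|f''(x)\|_{\mathrm{op}} \le 1$, we thus obtain $\Gamma(g) \le 1$, i.e.\ we may take $b = 1$.

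Plugging $g = |\nabla f|$ and $b = 1$ into Proposition~\ref{sq-chr} yields the one-sided estimate
\[
\mu(f - \IE_\mu f \ge t) \le \exp\Big(-\frac{1}{4\rho}\min\Big(\frac{t^2}{2\IE_\mu|\nabla f|^2}, t\Big)\Big).
\]
Since $\Gamma(-f) = |\nabla f| = \Gamma(f)$ and $(-f)'' = -f''$ has the same operator norm as $f''$, the same bound applies to $-f$. A union bound then gives the two-sided concentration inequality stated, with the factor of $2$ out front.

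The argument is essentially routine once the key pointwise inequality $|\nabla|\nabla f|| \le \|f''\|_{\mathrm{op}}$ is in hand; that inequality (together with noting that the square chain-rule condition holds for $\Gamma = |\nabla \cdot|$) is really the only non-bookkeeping step, and it is entirely parallel to what was done on the sphere. Accordingly, I do not expect any serious obstacle; the proof is a direct transcription of the spherical argument with $(\nabla_S, f''_S, \rho_n)$ replaced by $(\nabla, f'', \rho)$.
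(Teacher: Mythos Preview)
Your proposal is correct and follows essentially the same route as the paper: reduce to Proposition~\ref{sq-chr} using that $\nabla$ satisfies the chain rule (so the LSI is an mLSI and the square chain-rule inequality holds), take $g=|\nabla f|$, and invoke the pointwise bound $|\nabla|\nabla f||\le\|f''\|_{\mathrm{op}}$. The paper simply cites this last inequality from \cite[Lemma~7.2]{GS16} rather than sketching its proof, but the overall argument is identical.
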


For the proof it only remains to note that $|\nabla|\nabla f|| \le \norm{f''}_{\mathrm{op}}$, cf. \cite[Lemma 7.2]{GS16}. As above, if we require the first order partial derivatives $\partial_i f$ to be centered (which translates into orthogonality to linear functions if $\mu$ is the standard Gaussian measure, for instance), a simple application of the Poincar\'{e} inequality yields
$
\IE_{\mu} \abs{\nabla f}^2 \le \rho \IE_{\mu} \norm{f''}_{\mathrm{HS}}^2.
$
In particular, we have the following corollary which immediately follows from Proposition \ref{sq-chr} and the Poincar{\'e} inequality.

\begin{corollary}\label{corollary:SecondOrderHessian}
	Let $G \subseteq \IR^n$ be an open set, equipped with a probability measure $\mu$ satisfying a $\nabla\mathrm{-LSI}(\rho)$, and $f: G \to \IR$ be a $C^2$ function with
	\[
	\sup_{x \in \mathrm{supp}(\mu)} \norm{f''(x)}_{\mathrm{op}} \le b\quad\text{and}\quad \int \norm{f''(x)}_{\mathrm{HS}}^2 d\mu(x) \le a^2.
	\]
	For any $t \ge 0$ we have
	\[
	\mu\big( \abs{f(x) - \IE_{\mu} f(x) - \skal{x - \IE_\mu (x), \IE_\mu \nabla f(x)}} \ge t \big) \le 2 \exp\Big( - \frac{1}{4} \min\Big( \frac{t^2}{2\rho^2 a^2}, \frac{t}{\rho b} \Big) \Big).
	\]
\end{corollary}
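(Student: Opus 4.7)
The plan is to reduce the statement to a direct application of Proposition \ref{sq-chr} after subtracting an affine piece of $f$ that makes the gradient centered. Set
\[
h(x) \coloneqq f(x) - \skal{x, \IE_\mu \nabla f},
\]
so that $\nabla h(x) = \nabla f(x) - \IE_\mu \nabla f$ has mean zero coordinate-wise, $h''(x) = f''(x)$ (in particular $h$ inherits the bounds $\norm{h''}_{\mathrm{op}} \le b$ and $\IE_\mu \norm{h''}_{\mathrm{HS}}^2 \le a^2$), and
\[
h(x) - \IE_\mu h = f(x) - \IE_\mu f - \skal{x - \IE_\mu x, \IE_\mu \nabla f}.
\]
It therefore suffices to prove the desired two-level concentration inequality for $h$.

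Next, I would work with the difference operator $\Gamma(\varphi) \coloneqq \abs{\nabla \varphi}$. As explained at the start of Section \ref{subsection:Derivations}, the chain rule for $\nabla$ makes $\nabla\mathrm{-LSI}(\rho)$ equivalent to $\Gamma\mathrm{-mLSI}(\rho)$; moreover $\Gamma(\varphi^2) = 2\abs{\varphi}\Gamma(\varphi)$ and $\Gamma(a\varphi) = \abs{a}\Gamma(\varphi)$, so the two-sided version of Proposition \ref{sq-chr} is applicable. Choose $g \coloneqq \abs{\nabla h}$. Then $\Gamma(h) \le g$ trivially, while the pointwise bound $\abs{\nabla \abs{\nabla h}} \le \norm{h''}_{\mathrm{op}}$ from \cite[Lemma 7.2]{GS16} gives $\Gamma(g) \le b$. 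Proposition \ref{sq-chr} then yields
\[
\mu\big(\abs{h - \IE_\mu h} \ge t\big) \le 2\exp\Big(-\frac{1}{4\rho}\min\Big(\frac{t^2}{2\IE_\mu g^2}, \frac{t}{b}\Big)\Big).
\]

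The only non-routine step is to control $\IE_\mu g^2 = \IE_\mu \abs{\nabla h}^2$ by $\rho a^2$; here the crucial point is that subtracting the linear piece has centered each $\partial_i h$ without changing the Hessian. Since $\nabla\mathrm{-LSI}(\rho)$ implies the Poincar\'e inequality with the same constant $\rho$, and $\IE_\mu \partial_i h = 0$ for each $i$,
\[
\IE_\mu \abs{\nabla h}^2 = \sum_{i=1}^n \mathrm{Var}_\mu(\partial_i f) \le \rho \sum_{i=1}^n \IE_\mu \abs{\nabla \partial_i f}^2 = \rho\, \IE_\mu \norm{f''}_{\mathrm{HS}}^2 \le \rho a^2.
\]
Inserting this into the previous display and simplifying gives exactly the claimed bound with $\tfrac{1}{4}\min(t^2/(2\rho^2 a^2), t/(\rho b))$ in the exponent.

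The main (modest) obstacle is the bookkeeping at the start: noticing that subtracting $\skal{x, \IE_\mu \nabla f}$ leaves both $f''$ and its HS/operator norms invariant while producing a centered gradient, so that Poincar\'e supplies the extra factor of $\rho$ needed to match the $\rho^2 a^2$ in the Gaussian denominator. Everything else is a direct plug-in into Proposition \ref{sq-chr} combined with the cited gradient-of-norm estimate.
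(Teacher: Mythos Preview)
Your proposal is correct and follows precisely the route the paper indicates: subtract the linear part to center $\nabla f$, apply Proposition \ref{sq-chr} with $g=\abs{\nabla h}$ (using $\abs{\nabla\abs{\nabla h}}\le\norm{h''}_{\mathrm{op}}$ from \cite[Lemma 7.2]{GS16}), and then invoke the Poincar\'e inequality to get $\IE_\mu\abs{\nabla h}^2\le\rho a^2$. The paper merely says the corollary ``immediately follows from Proposition \ref{sq-chr} and the Poincar\'e inequality''; you have supplied exactly those details.
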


Thus, if we recenter a function and its derivatives, the two conditions on the Hessian ensure two-level concentration inequalities. For functions $f(X,Y)$ of independent Gaussian vectors, two-level concentration inequalities have been studied in \cite{Wo13} using the Hoeffding decomposition instead of a recentering of the partial derivatives.

Note that \eqref{twolevelsphere} and Corollary \ref{corollary:SecondOrderHessian} do not only recover \cite[Theorem 1.1]{BCG17} and \cite[Theorem 1.4]{GS16}, but even strengthen these results by providing two-level bounds. To illustrate this, we discuss one of the examples from \cite{GS16} in more detail.

\begin{example}[Eigenvalues of Wigner matrices]
	Let $\{\xi_{jk}, 1 \le j \le k \le N \}$ be a family of independent real-valued random variables whose distributions all satisfy a $\nabla\mathrm{-LSI}(\rho)$ for a fixed $\rho > 0$. Putting $\xi_{jk} = \xi_{kj}$ for $1 \le k < j \le N$, we define the random matrix $\Xi = (\xi_{jk}/\sqrt{N})_{1 \le j, k \le N}$. Then, by a simple argument using the Hoffman--Wielandt theorem, the joint distribution $\mu^{(N)} = \mu$ of its ordered eigenvalues $\lambda_1 \le \ldots \le \lambda_N$ on $\mathbb{R}^N$ (in fact, $\lambda_1 < \ldots < \lambda_N$ a.s.) satisfies a $\nabla\mathrm{-LSI}(\rho_N)$ with constant $\rho_N = 2 \rho/N$ (see for instance \cite{BG10}).
	
	Now consider a $\mathcal{C}^2$-smooth function $g \colon \mathbb{R}^2 \to \mathbb{R}$ with first order (partial) derivatives in $L^1(\mu)$ and second order derivatives bounded by some constant $\gamma$. Considering a quadratic statistic $\sum_{j \ne k} g(\lambda_j, \lambda_k)$ and recentering according to Corollary \ref{corollary:SecondOrderHessian}, we shall study
	\begin{align*}
	Q_N \coloneqq  &\sum_{j \ne k} g(\lambda_j, \lambda_k) - \sum_{j \ne k}
	\mu[g(\lambda_j, \lambda_k)]\\
	&- \sum_{i=1}^{N}\big(\sum_{k: k \ne i} (\mu[g_x(\lambda_i,
	\lambda_k)]+\mu[g_y(\lambda_k,\lambda_i)])\big)(\lambda_i - \mu[\lambda_i]),
	\end{align*}
	where $g_x, g_y$ denote partial derivatives. For instance, if $g(x,y) \coloneqq xy$, we have $Q_N = \sum_{j \ne k} (\lambda_j - \mu[\lambda_j])(\lambda_k - \mu[\lambda_k])$. Simple calculations show that $\lVert Q_N'' \rVert_{\mathrm{op}} \le c\gamma N$ as well as $\int \lVert Q_N \rVert_{\mathrm{HS}}^2d\mu \le c \gamma^2 N^3$. Here, by $c > 0$ we denote suitable absolute constants which may vary from line to line. Following \cite[Proposition 8.5]{GS16}, this leads to the exponential moment bound
	\[
	\int \exp\left(\frac{c}{\rho \gamma N^{1/2}}\lvert Q_N\rvert\right) d\mu \le 2.
	\]
	By Chebyshev's inequality, $\mu(|Q_N| \ge t) \le 2\exp(-ct/(\rho \gamma N^{1/2}))$ for all $t \ge 0$, thus yielding subexponential fluctuations of order $\mathcal{O}_P(N^{1/2})$.
	
	By contrast, Corollary \ref{corollary:SecondOrderHessian} leads to
	\[
	\mu(\abs{Q_N} \ge t) \le 2 \exp\Big(- c \min\Big( \frac{t^2}{\rho^2 \gamma^2 N}, \frac{t}{\rho \gamma} \Big) \Big),
	\]
	which is much better for large $t$. In particular, the fluctuations in the subexponential regime are of order $\mathcal{O}_P(1)$ now. This can be interpreted as an extension of the self-normalizing property of linear eigenvalue statistics to a second order situation on the level of fluctuations (cf. the discussion of \cite[Proposition 8.5]{GS16}). Note that in \cite{GS16}, a comparable result could be achieved for the special case of $g(x,y) \coloneqq xy$ only.
\end{example}

\subsection{Weakly dependent measures}\label{subsection:WeaklyDependentMeasures}
To continue the discussion of the previous section for a larger class of measures, we will now consider applications of Theorem \ref{theorem:SecondOrderConcentration} for functions of weakly dependent random variables (which, in our case, essentially means that a certain mLSI with respect to a suitable difference operator is satisfied). Throughout this section, we shall consider probability measures $\mu$ on a product of Polish spaces $\mathcal{X} = \otimes_{i = 1}^n \mathcal{X}_i$. For a vector $x = (x_i)_{i \in I}$ and $j \in I$ we let $x_{i^c} = (x_j)_{j \in I \backslash \{i \}}$, and for $y \in \IR$ we write $y_+ = \max(y,0)$. Now we define difference operators on $L^\infty(\mu)$ via
\begin{align*}
\abs{\dpartial f}(x) &= \Big(\sum_{i = 1}^n \int (f(x) - f(x_{i^c}, x_i'))^2 d\mu(x_i' \mid x_{i^c})\Big)^{1/2},\\
\abs{\dpartial^+ f}(x) &= \Big( \sum_{i = 1}^n \int (f(x) - f(x_{i^c}, x_i'))_+^2 d\mu(x_i' \mid x_{i^c}) \Big)^{1/2},\\
\abs{\mathfrak{h} f}(x) &= \Big(\sum_{i = 1}^n \sup_{x_i, x_i'} (f(x) - f(x_{i^c}, x_i'))^2\Big)^{1/2},\\
\abs{\mathfrak{h}^+ f}(x) &= \Big(\sum_{i = 1}^n \sup_{x_i'} (f(x) - f(x_{i^c}, x_i'))_+^2\Big)^{1/2}.
\end{align*}
Here, the suprema over $x_i'$ (and $x_i$) are to be understood with respect to the support of $\mu$. Clearly, $\abs{\dpartial f} \le \abs{\mathfrak{h} f}$ and $\abs{\dpartial^+ f} \le \abs{\mathfrak{h}^+ f}$. Moreover, we need a second order version of the difference operator $\mathfrak{h}$. To this end, for any $i \ne j$, define
\[
\mathfrak{h}_{ij}f(x) = \sup_{x_i, x_i', x_j, x_j'} \abs{f(x) - f(x_{i^c}, x_i' - f(x_{j^c}, x_j') - f(x_{\{i,j\}^c}, x_i',x_j')}
\]
and let $\mathfrak{h}^{(2)}f(x)$ be the matrix (``Hessian'') with zero diagonal and entries $h_{ij}f(x)$ on the off-diagonal.

We now have the following second order result in presence of a $\mathfrak{d}\mathrm{-mLSI}$:

\begin{proposition}\label{proposition:WeaklyDependentRandomVariables}
	Let $\mu$ be a probability measure on a product of Polish spaces $\mathcal{X} = \otimes_{i = 1}^n \mathcal{X}_i$ satisfying a $\mathfrak{d}\mathrm{-mLSI}(\sigma^2)$, and let $f: \mathcal{X} \to \IR$ be a bounded measurable function. If $\abs{\mathfrak{d}^+ \abs{\dpartial f}} \le b$, we have for any $t \ge 0$
	\begin{equation}\label{d-result}
	\mu\big( \abs{f - \IE_\mu f} \ge t \big) \le 2 \exp\Big( -\frac{1}{12\sigma^2} \min\Big( \frac{t^2}{(\IE_\mu \abs{\dpartial f})^2}, \frac{t}{2b} \Big) \Big).
	\end{equation}
	On the other hand, if $\norm{\mathfrak{h}^{(2)} f}_{\mathrm{op}} \le b$ for all $x \in \mathcal{X}$, we have for all $t \ge 0$
	\begin{equation}\label{h-result}
	\mu\big( \abs{f - \IE_\mu f} \ge t \big) \le 2 \exp\Big( -\frac{1}{12\sigma^2} \min\Big( \frac{t^2}{(\IE_\mu \abs{\mathfrak{h} f})^2}, \frac{t}{b} \Big) \Big).
	\end{equation}
\end{proposition}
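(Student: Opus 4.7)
Both inequalities fit the framework of Theorem~\ref{theorem:SecondOrderConcentration}: I will apply it with $g := |\dpartial f|$ for~\eqref{d-result} and $g := |\mathfrak{h} f|$ for~\eqref{h-result}. In either case $\Gamma(f) = |\dpartial f| \le g$ (trivially in the first setting; in the second, because $\int (\cdot)^2 d\mu \le \sup (\cdot)^2$), and the invariances $|\dpartial(-f)| = |\dpartial f|$, $|\mathfrak{h}(-f)| = |\mathfrak{h} f|$, $\mathfrak{h}^{(2)}(-f) = \mathfrak{h}^{(2)} f$ (each definition uses squares/absolute values of differences) will let us run the argument on $-f$ as well, producing the two-sided conclusions. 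The real work is to verify the sub-Gaussian hypothesis~\eqref{eqn:gSubGaussian} for $g$ with $c = \IE_\mu g$ and variance proxy $K^2$ of order $\sigma^2 b^2$.

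\noindent\textbf{Modified Herbst.} To extract sub-Gaussianity of $g$ from the $\dpartial$-mLSI I will use, for $\lambda > 0$, the symmetrization bound
\begin{equation*}
\IE_\mu |\dpartial g|^2 e^{\lambda g} \le 2\, \IE_\mu |\dpartial^+ g|^2 e^{\lambda g}.
\end{equation*}
This follows from decomposing each summand $(g(x) - g(x_{i^c}, x_i'))^2$ into its positive and negative parts, using $e^{\lambda g(x)} \le e^{\lambda g(x_{i^c}, x_i')}$ precisely on $\{g(x) \le g(x_{i^c}, x_i')\}$ (the set where the negative part is non-zero), and then renaming $x_i \leftrightarrow x_i'$ under the exchangeable measure $\mu(dx)\mu(dx_i' | x_{i^c})$. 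Plugged into the $\dpartial$-mLSI$(\sigma^2)$, it gives $\mathrm{Ent}_\mu(e^{\lambda g}) \le \sigma^2 \lambda^2 \IE_\mu |\dpartial^+ g|^2 e^{\lambda g}$; a pointwise bound $|\dpartial^+ g| \le b$ then yields $\mathrm{Ent}_\mu(e^{\lambda g}) \le \sigma^2 \lambda^2 b^2 \IE_\mu e^{\lambda g}$, and the standard Herbst step produces the sub-Gaussian upper tail $\mu(g \ge \IE_\mu g + t) \le \exp(-t^2/(4\sigma^2 b^2))$.

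\noindent\textbf{The pointwise bound on $|\dpartial^+ g|$.} For~\eqref{d-result} this is precisely the hypothesis, so there is nothing to prove. For~\eqref{h-result} it is the one genuinely delicate step, because the naive estimate
\[
|\dpartial g|^2(x) \le \sum_i \sum_j (\mathfrak{h}_j f(x) - \mathfrak{h}_j f(x_{i^c}, x_i'))^2 \le \sum_{i,j} \mathfrak{h}_{ij} f(x)^2 = \|\mathfrak{h}^{(2)} f(x)\|_{\mathrm{HS}}^2
\]
only controls the Hilbert--Schmidt norm, which is off by a factor $\sqrt n$ from what we need. The remedy is the variational identity $g(y) = \sup_{\|a\|_2 = 1, a \ge 0} a \cdot \mathfrak{h} f(y)$ for the $\ell^2$-norm of non-negative vectors: at $x$ with $g(x) > 0$ I pick the maximizer $a := \mathfrak{h} f(x)/g(x)$, and then, since $g(x_{i^c}, x_i') \ge a \cdot \mathfrak{h} f(x_{i^c}, x_i')$ for this specific $a$,
\begin{equation*}
(g(x) - g(x_{i^c}, x_i'))_+ \le a \cdot \bigl( \mathfrak{h} f(x) - \mathfrak{h} f(x_{i^c}, x_i') \bigr) \le \sum_j a_j\, \mathfrak{h}_{ij} f(x),
\end{equation*}
using $a \ge 0$ and the reverse-triangle estimate $|\mathfrak{h}_j f(x) - \mathfrak{h}_j f(x_{i^c}, x_i')| \le \mathfrak{h}_{ij} f(x)$ on the suprema defining $\mathfrak{h}_j f$. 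Squaring, summing over $i$, and using $\|\mathfrak{h}^{(2)} f(x)\, a\|_2 \le \|\mathfrak{h}^{(2)} f(x)\|_{\mathrm{op}} \|a\|_2 = \|\mathfrak{h}^{(2)} f(x)\|_{\mathrm{op}}$ gives $|\dpartial^+ g|^2(x) \le \|\mathfrak{h}^{(2)} f(x)\|_{\mathrm{op}}^2 \le b^2$. Once sub-Gaussianity of $g$ is in hand, Theorem~\ref{theorem:SecondOrderConcentration} (with $c = \IE_\mu g$, $C = 1$, $K = \sqrt{2}\sigma b$) delivers the one-sided bounds; applying the same argument to $-f$ and absorbing the resulting prefactor $\tfrac{8}{3}$ into $2$ via the constant-adjustment remark after~\eqref{eqn:constantAdjustment} yields the stated two-sided inequalities.
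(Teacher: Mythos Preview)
Your proposal is correct and follows essentially the same route as the paper. The paper also applies Theorem~\ref{theorem:SecondOrderConcentration} with $g=|\dpartial f|$ (resp.\ $g=|\mathfrak{h}f|$), obtains the sub-Gaussian tail for $g$ from a $\dpartial^+$-mLSI$(2\sigma^2)$ via the Herbst bound~\eqref{eqn:rechteFlanken}, and uses $|\dpartial(af)|=|a|\,|\dpartial f|$ to pass to the two-sided inequality. Your ``symmetrization bound'' is exactly the content of the paper's Lemma~\ref{mLSIs} (that a $\dpartial$-mLSI$(\sigma^2)$ implies a $\dpartial^+$-mLSI$(2\sigma^2)$), which you have simply inlined. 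For the $\mathfrak h$-case, the paper invokes the estimate $|\mathfrak{h}^+|\mathfrak{h}f||\le\|\mathfrak{h}^{(2)}f\|_{\mathrm{op}}$ from \cite[Lemma~3.2]{GSS18b}; your variational argument via $g(y)=\sup_{\|a\|_2=1,\,a\ge0}\langle a,\mathfrak{h}f(y)\rangle$ reproves this (in the slightly weaker but sufficient form with $\dpartial^+$ in place of $\mathfrak{h}^+$), so your proof is self-contained where the paper appeals to an external reference.
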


Proposition \ref{proposition:WeaklyDependentRandomVariables} implies many second order results from previous articles. For instance, it is well-known (and we will check again below) that any product probability measure $\mu$ satisfies a $\mathfrak{d}\mathrm{-mLSI}(1)$. Therefore, from \eqref{d-result} it is easily possible to obtain results similar to \cite[Theorem 1.2]{GS16}. To see this, it suffices to note that for functions with Hoeffding decomposition $f = \sum_{k=2}^n f_k$, one may apply \cite[Proposition 5.2]{GS16} to upper bound $\IE_\mu |\dpartial f|^2$ by $\IE_\mu \lVert \dpartial^{(2)} f \rVert_\mathrm{HS}^2$. Unlike in \cite{GS16}, Proposition \ref{proposition:WeaklyDependentRandomVariables} yields two-level (or Bernstein-type) inequalities, which can be regarded as an advantage of the present approach.

Similarly, we may retrieve (and sharpen) some of the results from further articles like e.\,g. \cite{GSS18} for $d =2$. On the other hand, it seems that requiring modified logarithmic Sobolev inequalities instead of usual logarithmic Sobolev inequalities extends the class of measures to which our results apply, in particular in non-independent situations. We will discuss the $\mathfrak{d}\mathrm{-mLSI}$ property and provide some sufficient conditions in more detail below.

For some classes of functions, we can obtain variants of Proposition \ref{proposition:WeaklyDependentRandomVariables} which are especially adapted to the properties of the functions under consideration. In particular, we may show deviation inequalities for suprema of quadratic forms in the spirit of \cite{KZ18} for the weakly dependent case.

\begin{proposition}	\label{proposition:HansonWright}
	Let $\mu$ be supported in $[-1,+1]^n$ and satisfy a $\mathfrak{d\mathrm{-mLSI}(\sigma^2)}$. Let $\mathcal{A}$ be a countable class of symmetric matrices, bounded in operator norm and with zeroes on its diagonal. Define $h(x) \coloneqq \sup_{A \in \mathcal{A}} \skal{x,Ax}$, $f_{\mathcal{A}}(x) \coloneqq \sup_{A \in \mathcal{A}} \norm{Ax}$ and $\Sigma \coloneqq \sup_{A \in \mathcal{A}} \norm{A}_{\mathrm{op}}$. We have for any $t > 0$
	\begin{equation}
	\mu ( h - \IE_\mu h \ge t ) \le \frac{4}{3} \exp\Big( - \frac{1}{128\sigma^2} \min\Big( \frac{t^2}{2 (\IE_\mu f_{\mathcal{A}})^2}, \frac{t}{\Sigma} \Big) \Big).
	\end{equation}
\end{proposition}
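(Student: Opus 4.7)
The plan is to follow the strategy underlying Theorem \ref{theorem:SecondOrderConcentration} (a Herbst-type argument combined with a sub-Gaussian estimate for a dominating function) applied directly to $h$, with the intended dominating quantity being a constant multiple of $f_{\mathcal{A}}$.

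First I would establish a pointwise gradient-like bound on $\abs{\dpartial h}$. Since every $A \in \mathcal{A}$ has vanishing diagonal, the map $t \mapsto \skal{(x_{i^c}, t), A(x_{i^c}, t)}$ is affine with slope $2(Ax)_i$, so that for $x' = (x_{i^c}, x_i')$,
\[
\skal{x, Ax} - \skal{x', Ax'} = 2(x_i - x_i')(Ax)_i.
\]
Selecting a near-maximiser $A^*(x) \in \mathcal{A}$ (possible since $\mathcal{A}$ is countable) and similarly $A^*(x')$, one obtains the one-sided bounds $(h(x) - h(x'))_+ \le 2\abs{x_i - x_i'}\abs{(A^*(x)x)_i}$ and $(h(x') - h(x))_+ \le 2\abs{x_i - x_i'}\abs{(A^*(x')x')_i}$. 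Squaring, summing over $i$, using $(x_i - x_i')^2 \le 4$ and $\sum_i (A^*(x) x)_i^2 \le f_{\mathcal{A}}(x)^2$ leads to
\[
\abs{\dpartial h}^2(x) \le 16\,f_{\mathcal{A}}(x)^2 + 16 \sum_{i=1}^n \IE\big[(A^*(x')x')_i^2 \,\big|\, x_{i^c}\big],
\]
whose second summand still carries the resampled coordinate.

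The remainder is handled by writing out the $\dpartial\mathrm{-mLSI}$ applied to $\lambda h$ and performing an exchangeability swap of $x_i$ and $x_i'$ under $\mu(\cdot \mid x_{i^c})$ \emph{inside} the resulting expectation; this trades the $A^*(x')$-term for an $A^*(x)$-term at the cost of transporting the exponential weight $e^{\lambda h(X)}$ to $e^{\lambda h(X')}$, which is absorbed using the one-coordinate control $\abs{h(x) - h(x')} \le 4\sup_A \abs{(Ax)_i}$ valid on $[-1,+1]^n$. Combined with the sub-Gaussian tail of $f_{\mathcal{A}}$ -- deduced from the first-order estimate \eqref{firstorder}, the Euclidean $\Sigma$-Lipschitz property of $f_{\mathcal{A}}$ as a supremum of the $\Sigma$-Lipschitz maps $x \mapsto \norm{Ax}_2$, and the support assumption on $\mu$ -- one adapts the proof of Theorem \ref{theorem:SecondOrderConcentration} with $c$ proportional to $\IE_\mu f_{\mathcal{A}}$ and $K$ proportional to $\sigma\Sigma$, tracking constants to arrive at the factors $128$ and $2$ in the exponent.

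The main obstacle is precisely the second summand in the pointwise gradient bound: the naive estimate $\abs{\dpartial h}^2 \le 16 \sum_i \sup_A (Ax)_i^2$ can be substantially larger than $16\,f_{\mathcal{A}}^2$ when different matrices $A \in \mathcal{A}$ dominate on different coordinates, so the exchangeability swap cannot be avoided. Keeping the resulting exponential-weight factor under control -- which is what forces the constant $1/128$ rather than a cleaner value -- is the most delicate step of the calculation.
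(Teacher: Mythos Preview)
Your proposal misidentifies the main difficulty and contains a genuine gap in the sub-Gaussian step.

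\textbf{The ``second summand'' is not an obstacle.} The paper never bounds $\abs{\dpartial h}$; it passes from the $\dpartial\mathrm{-mLSI}(\sigma^2)$ to a $\dpartial^+\mathrm{-mLSI}(2\sigma^2)$ (Lemma~\ref{mLSIs}) and works with $\dpartial^+$ throughout. For the positive part only $(h(x)-h(x'))_+$ matters, and this is controlled by the maximiser $\tilde A$ at $x$ alone, giving the clean pointwise bound $\abs{\dpartial^+ h}^2 \le 16 f_{\mathcal A}^2$ with no $A^*(x')$-term at all. Your exchangeability swap is an ad hoc rediscovery of Lemma~\ref{mLSIs}, but the way you phrase it --- first dropping the indicator $\{h(x')\ge h(x)\}$ to get the estimate $16\sum_i \IE[(A^*(x')x')_i^2\mid x_{i^c}]$, and only \emph{then} swapping --- loses exactly the information that makes the swap free. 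On $\{h(x')\ge h(x)\}$ one has $e^{\lambda h(x)}\le e^{\lambda h(x')}$ automatically, so no ``one-coordinate control'' is needed; your proposed absorption via $\abs{h(x)-h(x')}\le 4\sup_A\abs{(Ax)_i}$ introduces an $i$-dependent exponential factor that you would then have to control uniformly, and it is not clear this closes with the stated constants.

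\textbf{The sub-Gaussian estimate for $f_{\mathcal A}$ has a gap.} You appeal to \eqref{firstorder} via the Euclidean $\Sigma$-Lipschitz property of $f_{\mathcal A}$. But \eqref{firstorder} requires $\Gamma(f_{\mathcal A})$ bounded for the $\Gamma$ appearing in the mLSI, here $\dpartial$ (or $\dpartial^+$), not $\abs{\nabla}$. Euclidean $\Sigma$-Lipschitz only yields $\abs{\dpartial f_{\mathcal A}}^2 \le \Sigma^2\sum_i\int(x_i-x_i')^2\,d\mu(x_i'\mid x_{i^c})$, which is of order $n\Sigma^2$, not $\Sigma^2$. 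The paper instead fixes the maximiser $\tilde A$ and unit vector $\tilde v$ at $x$ and bounds
\[
\abs{\dpartial^+ f_{\mathcal A}}^2 \le \sum_i \sup_{x_i'}\big((x_i-x_i')\skal{\tilde A e_i,\tilde v}\big)_+^2 \le 4\sum_i \skal{\tilde A e_i,\tilde v}^2 = 4\abs{\tilde A^{\!\top}\tilde v}^2 \le 4\Sigma^2,
\]
which is dimension-free precisely because the \emph{same} $\tilde A,\tilde v$ serve for every $i$. With the two pointwise bounds $\abs{\dpartial^+ h}\le 4f_{\mathcal A}$ and $\abs{\dpartial^+ f_{\mathcal A}}\le 2\Sigma$, the proof is a direct application of Corollary~\ref{corollary:concIneqAlteVersion} with $\Gamma=\dpartial^+$, $\rho=2\sigma^2$, $g=4f_{\mathcal A}$, $b=8\Sigma$.
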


Note that while in general, we only obtain deviation inequalities here, for a single symmetric matrix $A$ with zeroes on its diagonal and the quadratic form $f(x) = \skal{x,Ax}$ similar arguments as in the proof of Proposition \ref{proposition:WeaklyDependentRandomVariables} do lead to concentration inequalities for $f$.

If $\mu$ is a product measure, the result of Proposition \ref{proposition:HansonWright} is well-known and has been proven various times, see for example \cite[Theorem 1.2]{Tal96a} for concentration inequalities in Rademacher random variables, \cite[Theorem 3.1]{Led97} for the upper tail inequalities and random variables satisfying $\abs{X_i} \le 1$, \cite[Theorem 17]{BLM03} for the upper bound and Rademacher random variables and \cite[Corollary 4]{BBLM05}. More recent results include \cite{HKZ12, RV13, Ad15, AKPS18, KZ18, GSS18b}.

To understand which classes of measures may be addressed by Propositions \ref{proposition:WeaklyDependentRandomVariables} and \ref{proposition:HansonWright}, let us study the $\mathfrak{d}\mathrm{-mLSI}$ property in more detail. First, we show that it is implied by another functional inequality. Assume that a probability measure $\mu$ on a product of Polish spaces $\mathcal{X} = \otimes_{i = 1}^n \mathcal{X}_i$ satisfies
\begin{equation}	\label{eqn:modLSI}
\mathrm{Ent}_{\mu}(e^f) \le \sigma^2 \sum_{i = 1}^n \int \mathrm{Cov}_{\mu(\cdot \mid x_{i^c})}(f(x_{i^c}, \cdot), e^{f(x_{i^c},\cdot)}) d\mu(x),
\end{equation}
where $\mu(\cdot \mid x_{i^c})$ denotes the regular conditional probability. This functional inequality is (also) known as a modified logarithmic Sobolev inequality in the framework of Markov processes, and it is equivalent to exponential decay of the relative entropy along the Glauber semigroup, see for example \cite{BT06} or \cite{CMT15}.

\begin{proposition}		\label{proposition:dPlusModLSI}
	If $\mu$ satisfies \eqref{eqn:modLSI}, then a $\dpartial\mathrm{-mLSI}(\sigma^2)$ and a $\dpartial^+\mathrm{-mLSI}(2\sigma^2)$ hold. Consequently, for any $f: \mathcal{X} \to \IR$ and any $\alpha > \sigma^2/2$ we have
	\begin{equation} \label{eqn:exponentialInequalityWithExponent}
	\IE_\mu \exp\Big( f - \IE_\mu f \Big) \le \Big( \IE_\mu \exp \Big( \alpha \abs{\dpartial f}^2 \Big) \Big)^{\frac{\sigma^2}{2\alpha-\sigma^2}}.
	\end{equation}
	The same is true for $\dpartial^+$ with $\sigma^2$ replaced by $2\sigma^2$.
	This especially holds for product measures $\mu = \mu_1 \otimes \ldots \otimes \mu_n$ with $\sigma^2 = 1$.
\end{proposition}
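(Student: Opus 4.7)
The plan is to first upgrade the assumption \eqref{eqn:modLSI} to genuine $\dpartial$- and $\dpartial^+$-mLSIs by estimating the conditional covariance on the right-hand side, and then to deduce the exponential moment inequality \eqref{eqn:exponentialInequalityWithExponent} from these mLSIs through a Herbst-type argument combined with the variational formula for entropy. The statement for product measures will follow from the subadditivity of entropy.

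For the mLSIs, I would write each conditional covariance using the standard iid-copy identity
\[
\mathrm{Cov}_\nu(g, e^g) = \tfrac{1}{2} \IE_{\nu\otimes\nu}\bigl[(g(X)-g(X'))(e^{g(X)}-e^{g(X')})\bigr]
\]
with $\nu = \mu(\cdot\mid x_{i^c})$ and $g = f(x_{i^c},\cdot)$. The elementary inequality $(a-b)(e^a - e^b) \le \tfrac{1}{2}(a-b)^2(e^a + e^b)$, which follows from $e^a - e^b = (a-b)\int_0^1 e^{b+t(a-b)}\,dt$ together with the convexity of the exponential, then bounds the conditional covariance by $\tfrac{1}{2} \IE_{\nu\otimes\nu}[(g(X)-g(X'))^2 e^{g(X)}]$ after exploiting the $X \leftrightarrow X'$ symmetry. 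Plugging this into \eqref{eqn:modLSI} and summing over $i$ produces exactly $\mathrm{Ent}_\mu(e^f) \le \tfrac{\sigma^2}{2} \IE_\mu[e^f \abs{\dpartial f}^2]$, which is the $\dpartial$-mLSI$(\sigma^2)$. The $\dpartial^+$ variant comes from the sharper pointwise estimate $(a-b)(e^a - e^b) \le (a-b)_+^2 e^a + (b-a)_+^2 e^b$, obtained by noting that $e^{b+t(a-b)} \le e^{\max(a,b)}$; the extra factor of $2$ relative to the symmetric bound accounts for the constant $2\sigma^2$ in the $\dpartial^+$-mLSI.

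For the exponential moment bound, I would run the classical Herbst argument on $K(\lambda) \coloneqq \lambda^{-1} \log \IE_\mu e^{\lambda f}$, for which $K(0+) = \IE_\mu f$ and $K'(\lambda) = \mathrm{Ent}_\mu(e^{\lambda f})/(\lambda^2 \IE_\mu e^{\lambda f})$. Applying the $\dpartial$-mLSI to $\lambda f$ yields $\mathrm{Ent}_\mu(e^{\lambda f}) \le \tfrac{\lambda^2\sigma^2}{2}\IE_\mu[\abs{\dpartial f}^2 e^{\lambda f}]$, but this cannot be integrated directly because $\abs{\dpartial f}$ is not assumed bounded. The key step, and in my view the main technical obstacle, is to decouple $\abs{\dpartial f}^2$ from $e^{\lambda f}$ via the variational formula $\IE[UW] \le \mathrm{Ent}_\mu(U) + \IE_\mu U \cdot \log \IE_\mu e^W$ applied to $U = e^{\lambda f}$ and $W = \alpha \abs{\dpartial f}^2$. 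A short rearrangement then shows that for any $\alpha > \sigma^2/2$
\[
K'(\lambda) \le \frac{\sigma^2}{2\alpha - \lambda^2\sigma^2} \log \IE_\mu e^{\alpha \abs{\dpartial f}^2}, \qquad \lambda \in [0,1],
\]
and since the denominator is smallest at $\lambda = 1$, the integrand over $[0,1]$ is dominated by $\sigma^2/(2\alpha - \sigma^2)$ times the same logarithmic moment. Integrating from $0$ to $1$ yields $\log \IE_\mu e^f \le \IE_\mu f + \frac{\sigma^2}{2\alpha-\sigma^2} \log \IE_\mu e^{\alpha \abs{\dpartial f}^2}$, which is \eqref{eqn:exponentialInequalityWithExponent}; the $\dpartial^+$ version is identical with $\sigma^2$ replaced by $2\sigma^2$.

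Finally, any product measure $\mu = \mu_1\otimes\cdots\otimes\mu_n$ satisfies \eqref{eqn:modLSI} with $\sigma^2 = 1$: the subadditivity of entropy $\mathrm{Ent}_\mu(e^f) \le \sum_i \IE_\mu \mathrm{Ent}_{\mu_i}(e^{f(x_{i^c},\cdot)})$ combined with the elementary inequality $\mathrm{Ent}_\nu(e^g) \le \mathrm{Cov}_\nu(g, e^g)$ valid for any probability measure $\nu$ (the difference being $\IE_\nu e^g \cdot (\IE_\nu g - \log \IE_\nu e^g) \le 0$ by Jensen) reproduces \eqref{eqn:modLSI} verbatim, so that all conclusions apply to product measures.
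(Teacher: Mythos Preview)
Your proposal is correct and follows essentially the same approach as the paper. The paper's proof of the $\dpartial\mathrm{-mLSI}(\sigma^2)$ uses the same covariance bound via the elementary inequality $(a-b)(e^a-e^b)\le \tfrac{1}{2}(a-b)^2(e^a+e^b)$; for the $\dpartial^+\mathrm{-mLSI}(2\sigma^2)$ the paper routes through a separate lemma (Lemma~\ref{mLSIs}) showing $\int\abs{\dpartial f}^2 e^f\,d\mu \le 2\int\abs{\dpartial^+ f}^2 e^f\,d\mu$ rather than your direct pointwise estimate, but the two arguments are the same computation up to reorganisation. Your Herbst-plus-variational argument for \eqref{eqn:exponentialInequalityWithExponent} is precisely the content of the cited Theorem~\ref{theorem:BG99} of Bobkov--G\"otze, which the paper simply invokes; and your verification of \eqref{eqn:modLSI} for product measures via entropy subadditivity and $\mathrm{Ent}_\nu(e^g)\le\mathrm{Cov}_\nu(g,e^g)$ is the standard argument the paper leaves implicit.
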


Here, choosing $\alpha = \sigma^2$ or $\alpha = 2\sigma^2$ respectively leads to the exponential inequalities
\[
\IE_\mu \exp(f) \le \IE_\mu \exp\Big( \sigma^2 \abs{\dpartial f}^2 \Big) \quad \text{and} \quad
\IE_\mu \exp(f) \le \IE_\mu \exp\Big( 2 \sigma^2 \abs{\dpartial^+ f}^2 \Big).
\]
The first inequality might be considered as a generalization of \cite[Lemma 8]{Ma00}, which in turn is based on arguments in \cite[Theorem 1.2]{Led97}. The second inequality involving $\abs{\dpartial f}^2$ is well-known in the case of the discrete cube, cf. \cite[Corollary 2.4]{BG99} with a better constant. On the other hand, the proof presented herein is remarkably short and does not rely on some special properties of the measure $\mu$, but can be derived under \eqref{eqn:modLSI}.

Proposition \ref{proposition:dPlusModLSI} implies \cite[Theorem 2]{BLM03}, as product measures satisfy \eqref{eqn:modLSI} with $\sigma^2 = 1$. Indeed, taking the logarithms on both sides of \eqref{eqn:exponentialInequalityWithExponent} gives for any $\alpha > 1$ and $\lambda \ge 0$
\[
\log \IE_\mu \exp\Big( \lambda(f - \IE_\mu f) \Big) \le \frac{1}{\alpha-1} \log \IE_\mu \exp \Big( \lambda^2 \alpha \abs{\dpartial^+ f}^2 \Big).
\]
It remains to choose some fixed $\theta > 0$ and set $\alpha = (\lambda \theta)^{-1}$.

The property \eqref{eqn:modLSI} is satisfied for a large class containing non-product measures. Note that a sufficient condition (due to Jensen's inequality) for \eqref{eqn:modLSI} is the \emph{approximate tensorization property}
\begin{equation}	\label{eqn:AT}
\mathrm{Ent}_{\mu}(e^f) \le \sigma^2 \sum_{i = 1}^n \int \mathrm{Ent}_{\mu(\cdot \mid x_{i^c})}(e^{f(x_{i^c}, \cdot)}) d\mu(x).
\end{equation}
Establishing \eqref{eqn:AT} is subject to ongoing research, and we especially want to highlight two possible approaches.

The first one is akin to the perturbation argument of Holley and Stroock as outlined in \cite{HS87} (see also \cite[Proposition 3.1.18]{Roy07} for a similar reasoning). 
Assume that $d\mu = Z^{-1} e^{f} d\nu$, where $f: \mathcal{X} \to \IR$ is a measurable function, $\nu = \otimes_{i = 1}^n \nu_i$ is \emph{some} product measure and $Z = \IE_\nu e^f$. If we require $f$ to be bounded, we clearly have $\mathrm{osc}(f) < \infty$ for its (maximal) oscillation $\mathrm{osc}(f) = \sup_{x \in \mathcal{X}} f(x) - \inf_{x \in \mathcal{X}} f(x)$. Under these assumptions, $\mu$ satisfies \eqref{eqn:AT} with $\sigma^2 = \exp(2 \mathrm{osc}(f))$. 

Furthermore, under weak dependence conditions  on the local specifications of some measure $\mu$ on a product space $\mathcal{X}$, \eqref{eqn:AT} was proven in \cite{Ma13, Ma15, CMT15}.

\subsection{Bernstein inequality}\label{subsection:Bernstein}
As a final application, let us demonstrate how to recover the classical Bernstein inequality for independent bounded random variables by means of Theorem \ref{theorem:SecondOrderConcentration} (up to constants). In fact, as in some previous works we may remove the boundedness assumption.

There are various extensions of Bernstein's inequality to unbounded random variables. For instance, \cite[Theorem 4]{Ad08} proves deviation inequalities for empirical processes in independent random variables with finite $\Psi_\alpha$ norm for some $\alpha \in (0,1]$, which in particular includes concentration inequalities for sums of random variables with finite $\Psi_\alpha$ norm. Moreover, \cite[Theorem 2.10]{BLM13} requires a certain control of the moments of the random variables, which is in essence a condition on the $\Psi_1$ norms. Thirdly, \cite[Theorem 2.8.1]{Ver18} provides a Bernstein inequality for random variables with bounded $\Psi_1$ norms. However, note that the Gaussian term in the last two mentioned works is a sum of the $\Psi_1$ norm instead of the variance. By our methods, we obtain a version of Bernstein's inequality for sub-Gaussian random variables with the variance of the sum in the Gaussian term, with a reasonable constant.

\begin{theorem}\label{theorem:Bernstein}
	There exists an absolute constant $c' > 0$ such that the following holds. For any set of independent random variables $X_1, \ldots, X_n$ satisfying $\norm{X_i}_{\Psi_2} < \infty$, we have for any $t \ge 0$
	\begin{equation}\label{eqn:BernsteinIneq}
	\IP\big(\abs{\sum_{i = 1}^n X_i - \IE X_i} \ge t\big) \le 2\exp\Big( -\min\Big( \frac{t^2}{80\mathrm{Var}(\sum_i X_i)}, \frac{t}{c'\norm{\max_i \abs{X_i}}_{\Psi_2}} \Big) \Big).
	\end{equation}
	In particular, if $\abs{X_i} \le M$ almost surely for all $i \in \{1,\ldots,n\}$ and some $M > 0$, then for all $t \ge 0$ it holds
	\[
	\IP\big(\abs{\sum_{i = 1}^n X_i - \IE X_i} \ge t\big) \le 2\exp\Big( -\min\Big( \frac{t^2}{80\mathrm{Var}(\sum_i X_i)}, \frac{t}{c'M} \Big) \Big).
	\]
\end{theorem}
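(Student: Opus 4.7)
The strategy is to apply Theorem~\ref{theorem:SecondOrderConcentration} to the joint law $\mu = \bigotimes_{i=1}^n \mu_i$ of $(X_1,\ldots,X_n)$, which by Proposition~\ref{proposition:dPlusModLSI} satisfies a $\mathfrak{d}\mathrm{-mLSI}(1)$. Since $\mathfrak{d}(af) = |a|\mathfrak{d}(f)$, the two-sided inequality~\eqref{eqn:conine} will be available. With $f(x) := \sum_i (x_i - \IE X_i)$, the cancellation $f(x) - f(x_{i^c},x_i') = x_i - x_i'$ gives
\[
|\mathfrak{d} f|^2(x) \;=\; \sum_{i=1}^n \int_{\IR}(x_i - x_i')^2\,d\mu_i(x_i') \;=\; \sum_i (x_i - \IE X_i)^2 + \sigma^2,
\]
with $\sigma^2 := \mathrm{Var}(\sum_i X_i)$. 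Thus I would take $g(x) := \bigl(\sum_i (x_i - \IE X_i)^2 + \sigma^2\bigr)^{1/2}$, so that $|\mathfrak{d} f| = g$, $\IE g^2 = 2\sigma^2$, and $g$ is convex and $1$-Lipschitz in the Euclidean metric.

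The crucial step is to verify the sub-Gaussian control~\eqref{eqn:gSubGaussian} for $g$ with $c$ of order $\sigma$ and $K$ of order $\|\max_i|X_i|\|_{\Psi_2}$. For bounded coordinates $|X_i|\le M$ this follows from Talagrand's dimension-free convex concentration for convex Lipschitz functions of independent bounded random variables, which yields $K \lesssim M$. For the general sub-Gaussian case I would write $g^2 - 2\sigma^2 = \sum_i\bigl[(X_i-\IE X_i)^2 - \mathrm{Var}(X_i)\bigr]$ as a centred sum of independent sub-exponential summands, apply the classical Bernstein inequality for such sums, and transfer to $g$ via the elementary identity $g - \sqrt{2}\,\sigma = (g^2 - 2\sigma^2)/(g + \sqrt{2}\,\sigma) \le (g^2 - 2\sigma^2)/(\sqrt{2}\,\sigma)$; the resulting exponential term in terms of $\max_i\|X_i\|_{\Psi_2}^2$ is then bounded by $\|\max_i|X_i|\|_{\Psi_2}^2$.

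Once~\eqref{eqn:gSubGaussian} is in hand, Theorem~\ref{theorem:SecondOrderConcentration} with $\rho=1$ applied via~\eqref{eqn:conine} delivers~\eqref{eqn:BernsteinIneq}: the $\min$ there, with $c^2 \sim \sigma^2$ and $K \sim \|\max_i|X_i|\|_{\Psi_2}$, produces after tracking numerical constants the two terms $t^2/(80\,\mathrm{Var}(\sum X_i))$ and $t/(c'\|\max_i|X_i|\|_{\Psi_2})$. The bounded case is then an immediate specialisation of the general one, since $\|\max_i|X_i|\|_{\Psi_2} \lesssim M$.

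The principal obstacle is this dimension-free sub-Gaussian control of $g$: crude estimates such as $\|x-\IE X\|_2 \le \sqrt{n}\,\|x-\IE X\|_\infty$ introduce a spurious $\sqrt{n}$, so one must genuinely exploit either the convexity of $g$ (in the bounded case and its sub-Gaussian extensions) or a tight Bernstein bound for the sub-exponential sum $g^2 - 2\sigma^2$ and convert it carefully through the square-root.
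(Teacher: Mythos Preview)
Your overall architecture is exactly that of the paper: exploit the $\dpartial\mathrm{-mLSI}(1)$ for product measures from Proposition~\ref{proposition:dPlusModLSI}, compute $\abs{\dpartial f}$ for $f(x)=\sum_i(x_i-\IE X_i)$, dominate it by a function $g$ that is convex and $1$-Lipschitz in the Euclidean norm, and then feed the sub-Gaussian tail of $g$ into Theorem~\ref{theorem:SecondOrderConcentration}. The paper's choice of $g$ is only cosmetically different from yours (it writes $\abs{\dpartial f}\le\abs{\,\norm{X}_2-\IE\norm{X}_2}+\IE\norm{X}_2+\sigma$, which just shifts the centring constant). For bounded coordinates your appeal to Talagrand-type convex concentration is also what the paper does in disguise.

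The gap is in your treatment of the general sub-Gaussian case. Writing $g^2-2\sigma^2=\sum_iY_i$ with $Y_i=(X_i-\IE X_i)^2-\mathrm{Var}(X_i)$ and applying the classical Bernstein inequality for sub-exponential sums yields a \emph{two-level} bound
\[
\IP\Big(\textstyle\sum_iY_i\ge s\Big)\le 2\exp\Big(-c\min\Big(\frac{s^2}{\sum_i\norm{X_i}_{\Psi_2}^4},\ \frac{s}{\max_i\norm{X_i}_{\Psi_2}^2}\Big)\Big),
\]
and neither of your transfers turns this into the pure sub-Gaussian estimate~\eqref{eqn:gSubGaussian} with dimension-free $C$ and $K$. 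With the linear transfer $g-\sqrt{2}\sigma\le(g^2-2\sigma^2)/(\sqrt{2}\sigma)$, the sub-exponential branch gives only $\exp(-c\sigma t/\max_i\norm{X_i}_{\Psi_2}^2)$, which is not sub-Gaussian for large $t$. With the square-root transfer $(g-\sqrt{2}\sigma)_+\le\sqrt{(g^2-2\sigma^2)_+}$, the sub-exponential branch does give $\exp(-ct^2/\max_i\norm{X_i}_{\Psi_2}^2)$, but the sub-Gaussian branch becomes $\exp(-ct^4/\sum_i\norm{X_i}_{\Psi_2}^4)$, which is the binding one for moderate $t$; matching it to $C\exp(-t^2/(2K^2))$ forces $C\gtrsim\exp(c\sum_i\norm{X_i}_{\Psi_2}^4/(\max_i\norm{X_i}_{\Psi_2}^2)^2)$, which is of order $\exp(cn)$ already in the i.i.d.\ case. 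In short, your argument loses precisely at the place you yourself flagged as the principal obstacle.

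The paper closes this step differently: it invokes \cite[Lemma~1.4]{KZ18}, which gives directly that any convex $1$-Lipschitz function of independent sub-Gaussian coordinates has sub-Gaussian fluctuations with $K$ of order $\norm{\max_i\abs{X_i}}_{\Psi_2}$. Since your $g$ (or the paper's $\norm{X}_2$) is convex and $1$-Lipschitz, that lemma applies verbatim and replaces your Bernstein detour. If you substitute this for your second paragraph, the rest of your argument goes through and matches the paper.
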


We want to give three concluding remarks on Theorem \ref{theorem:Bernstein}.
Firstly, note that is not possible to prove an inequality
\[
\IP\big( \abs{\sum_{i = 1}^n X_i - \IE X_i} \ge t \big) \le 2 \exp\Big( - c\frac{t^2}{\mathrm{Var}(\sum_i X_i)} \Big)
\]
for some absolute constant $c > 0$ in the class of all sub-Gaussian random variables. This can be easily seen in the case $n = 1$ and by choosing $X \sim \mathrm{Ber}(p)$ for $p \to 0$. Thus, to obtain a sub-Gaussian tail with the variance parameter, one has to limit the range of $t$ for which one can expect sub-Gaussian behaviour.

Secondly, one cannot replace $\norm{\max_i \abs{X_i}}_{\Psi_2}$ by $\max_i \norm{X_i}_{\Psi_2}$ in \eqref{eqn:BernsteinIneq}, i.\,e. there cannot be an inequality of the form
\[
\IP\big(\abs{\sum_{i = 1}^n X_i - \IE X_i} \ge t\big) \le 2\exp\Big( -c\min\Big( \frac{t^2}{\mathrm{Var}(\sum_i X_i)}, \frac{t}{\max_i \norm{X_i}_{\Psi_2}} \Big) \Big).
\]
This, again, follows by choosing $X_i \sim \mathrm{Ber}(p)$ for $p = \lambda/n$, $\lambda > 0$. In this case, the sum converges (weakly) to a Poisson random variable, whereas the sub-Gaussian range extends to $t \in \IR_+$ for $n \to \infty$, giving a contradiction.

Thirdly, it is well known that the $\Psi_2$ norm of the maximum of $\Psi_2$ random variables (bounded by some constant, say $K$) grows at most logarithmically in the dimension. For example, if we consider i.\,i.\,d. random variables $X_i$ with unit variance, we have the sub-Gaussian estimate for $t$ of order (at least) $n/\log(n)$.

\section{Proofs and auxiliary results}\label{section:proofs}
We begin by proving Theorem \ref{theorem:SecondOrderConcentration}. Before we start, let us recall \cite[Theorem 2.1]{BG99}, relating the exponential moments of $f - \IE_\mu f$ to those of $\Gamma(f)^2$.

\begin{theorem}\label{theorem:BG99}
	Assume that $(\Omega, \mu, \Gamma)$ satisfies \eqref{eqn:modLSIdef} with constant $\rho > 0$. Then for any $f \in \mathcal{A}$ and any $\alpha > \frac{\rho}{2}$ we have
	\[
	\IE_\mu \exp(f - \IE_\mu f) \le \big( \IE_\mu \exp(\alpha \Gamma(f)^2) \big)^{\frac{\rho}{2\alpha - \rho}}.
	\]
\end{theorem}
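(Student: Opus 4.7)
The plan is to run the classical Herbst argument adapted to the modified LSI, using a Young/Gibbs-type variational inequality to peel $\Gamma(f)^2$ off the weight $e^{\lambda f}$ before integrating a one-parameter family.

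First, I would apply the $\Gamma$-mLSI$(\rho)$ to $\lambda f$ with $\lambda \ge 0$; property (2) of the difference operator gives $\Gamma(\lambda f) = \lambda \Gamma(f)$, so
\[
\mathrm{Ent}_\mu(e^{\lambda f}) \le \frac{\rho \lambda^2}{2}\, \IE_\mu \Gamma(f)^2 e^{\lambda f}.
\]
To decouple $\Gamma(f)^2$ from the density $e^{\lambda f}$, apply the Gibbs variational inequality $\IE_\mu(hU) \le \mathrm{Ent}_\mu(h) + \IE_\mu h \cdot \log \IE_\mu e^U$ with $h = e^{\lambda f}$ and $U = \alpha \lambda^2 \Gamma(f)^2$, obtaining
\[
\alpha \lambda^2 \IE_\mu \Gamma(f)^2 e^{\lambda f} \le \mathrm{Ent}_\mu(e^{\lambda f}) + H(\lambda) \log \IE_\mu e^{\alpha \lambda^2 \Gamma(f)^2},
\]
where $H(\lambda) \coloneqq \IE_\mu e^{\lambda f}$. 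Combining these two estimates and solving the resulting linear inequality in $\mathrm{Ent}_\mu(e^{\lambda f})$---possible exactly when $\alpha > \rho/2$---gives
\[
\mathrm{Ent}_\mu(e^{\lambda f}) \le \frac{\rho}{2\alpha - \rho}\, H(\lambda) \log \IE_\mu e^{\alpha \lambda^2 \Gamma(f)^2}.
\]

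Next, set $K(\lambda) \coloneqq \log H(\lambda)$ and use the Herbst identity
\[
\mathrm{Ent}_\mu(e^{\lambda f}) = \lambda H'(\lambda) - H(\lambda) K(\lambda) = \lambda^2 H(\lambda)\, \frac{d}{d\lambda}\!\left(\frac{K(\lambda)}{\lambda}\right).
\]
Dividing by $\lambda^2 H(\lambda)$ turns the previous bound into the differential inequality
\[
\frac{d}{d\lambda}\!\left(\frac{K(\lambda)}{\lambda}\right) \le \frac{\rho}{2\alpha - \rho} \cdot \frac{\log \IE_\mu e^{\alpha \lambda^2 \Gamma(f)^2}}{\lambda^2}.
\]
Integrating from $0^+$ to $1$, and using $K(\lambda)/\lambda \to \IE_\mu f$ as $\lambda \to 0^+$ (Taylor expansion of $H$), the left-hand side becomes $\log \IE_\mu e^f - \IE_\mu f = \log \IE_\mu e^{f - \IE_\mu f}$. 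For the right-hand side, Jensen applied to the concave map $x \mapsto x^{\lambda^2}$ for $\lambda \in (0,1]$ yields $\IE_\mu e^{\alpha \lambda^2 \Gamma(f)^2} \le (\IE_\mu e^{\alpha \Gamma(f)^2})^{\lambda^2}$, which cancels the $\lambda^2$ in the denominator and leaves a $\lambda$-independent integrand. Performing the integration and exponentiating gives the claimed inequality.

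The main obstacle---or rather the decisive, least automatic step---is noticing that the Jensen bound $\IE_\mu e^{\lambda^2 X} \le (\IE_\mu e^{X})^{\lambda^2}$ applied with $X = \alpha \Gamma(f)^2 \ge 0$ collapses the integral to the single factor $\log \IE_\mu e^{\alpha \Gamma(f)^2}$ appearing in the theorem. The mLSI-plus-entropy-inequality manipulation is routine once one tracks the sign constraint that forces $\alpha > \rho/2$, and this is precisely what produces the exponent $\rho/(2\alpha - \rho)$ in the conclusion.
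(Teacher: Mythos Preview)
Your proof is correct and is precisely the argument of \cite[Theorem~2.1]{BG99}, which the paper cites rather than reproves: the Gibbs variational inequality to absorb $\Gamma(f)^2 e^{\lambda f}$ into $\mathrm{Ent}_\mu(e^{\lambda f})$, the Herbst differential inequality for $K(\lambda)/\lambda$, and the Jensen step $\IE_\mu e^{\alpha\lambda^2\Gamma(f)^2}\le(\IE_\mu e^{\alpha\Gamma(f)^2})^{\lambda^2}$ are exactly the three ingredients there. There is nothing to add; the paper treats this result as a black box, and you have supplied the standard proof.
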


Note that formally, Theorem \ref{theorem:BG99} and our own results like Theorem \ref{theorem:SecondOrderConcentration} are valid for bounded functions $f$ only, since $\Gamma$ was defined on a subset of bounded functions. However, it is not hard to see that our proofs can usually be extended to a suitable larger class of functions $\tilde{\mathcal{A}} \supset \mathcal{A}$. One possible approach is first to truncate the random variable $f$ under consideration, and then prove bounds which are independent of the truncation level. As this is somewhat situational and depends on the difference operator $\Gamma$, we stick to the boundedness assumption for the sake of a clearer presentation of the arguments. Nevertheless, we can prove Theorem \ref{theorem:SecondOrderConcentration} under the assumption that $\Gamma$ can be suitably defined for the function $f$ at hand, and that $\Gamma(f) \le g$ for some sub-Gaussian function.

Furthermore, we need an elementary inequality to adjust the constants in concentration or deviation inequalities: for any two constants $c_1 > c_2 > 1$ we have for all $r \ge 0$ and $c > 0$
\begin{equation}\label{eqn:constantAdjustment} 
c_1 \exp(-c r) \le c_2 \exp\Big(-\frac{\log(c_2)}{\log(c_1)} cr\Big)
\end{equation}
whenever the left hand side is smaller or equal to $1$. 

\begin{proof}[Proof of Theorem \ref{theorem:SecondOrderConcentration}]
	Assume that $\rho = 1$, which can always be achieved by defining a new difference operator $\Gamma_{\rho}(f) = \sqrt{\rho}\Gamma(f)$. The general inequality follows by straightforward modifications from the $\rho = 1$ case.
	
	Making use of Theorem \ref{theorem:BG99} in the first and $a^2 \le 2(a-b)_+^2 + 2b^2$ for any $a,b \ge 0$ in the second inequality, we obtain for all $\lambda \ge 0$
	\begin{align*}
	\IE_\mu \exp\big( \lambda (f - \IE_\mu f) \big) &\le \IE_\mu \exp\big( \lambda^2 \Gamma(f)^2 \big) 
	\le \exp\big( 2\lambda^2 c^2 \big) \IE_\mu \exp \big( 2\lambda^2 (g - c)_+^2 \big).
	\end{align*}
	The sub-Gaussian condition \eqref{eqn:gSubGaussian} leads to
	\[
	\int \exp(2\lambda^2 (g-c)_+^2) d\mu \le 1 + \int_0^\infty \exp\Big( -t \Big( \frac{1}{4 \lambda^2 K^2} -1\Big) \Big) dt \le \frac{C}{1 - 4\lambda^2K^2}
	\]
	whenever $4 \lambda^2 K^2 < 1$. Consequently, for all $\lambda \in [0,(2K)^{-1})$ we obtain by Markov's inequality
	\begin{equation}\label{eqn:ExponentialIneq}
	\mu(f - \IE_\mu f \ge t) \le  \frac{C}{1 - 4\lambda^2K^2} \exp\big(-\lambda t + 2 \lambda^2 c^2 \big).
	\end{equation}
	Now we distinguish the two cases $t \le \frac{c^2}{K}$ and $t > \frac{c^2}{K}$. In the first case, set $\lambda \coloneqq \frac{t}{4c^2}$ (which implies $4\lambda^2 K^2 \le 1/4$ and thus is in the range) to obtain
	\begin{align}\label{eqn:Range1}
	\frac{C}{1 - 4\lambda^2 K^2} \exp\big( - \lambda t + 2\lambda^2 c^2 \big) \le \frac{4C}{3} \exp\big( - \frac{t^2}{4c^2} + \frac{t^2}{8c^2} \big) = \frac{4C}{3} \exp\big( - \frac{t^2}{8 c^2} \big),
	\end{align}
	using the monotonicity of $\frac{1}{1-x}$. In the second case, we simply set $\lambda \coloneqq \frac{1}{4 K}$ (implying $\lambda^2 K^2 = 1/4$) and observe that
	\begin{align}\label{eqn:Range2}
	\frac{C}{1 - 4\lambda^2 K^2} \exp\big( - \lambda t + 2\lambda^2 c^2 \big) \le \frac{4C}{3} \exp\big( - \frac{t}{4K} + \frac{c^2}{8K^2} \big) \le \frac{4C}{3} \exp\big( - \frac{t}{8K} \big).
	\end{align}
	Combining \eqref{eqn:Range1} and \eqref{eqn:Range2} finishes the proof of \eqref{eqn:devine}.
	
	Finally, \eqref{eqn:conine} follows by considering $-f$ instead of $f$, which yields
	\[
	\mu(\abs{f - \IE_\mu f} \ge t) \le \frac{8C}{3} \exp\Big( - \frac{1}{8} \min\Big( \frac{t^2}{c^2}, \frac{t}{K} \Big) \Big).
	\]
	The constant can be adjusted using \eqref{eqn:constantAdjustment}.
\end{proof}

\begin{proof}[Proof of Corollary \ref{corollary:concIneqAlteVersion}]
	Using the $\Gamma\mathrm{-mLSI}(\rho)$, by applying Theorem \ref{theorem:BG99} to $f := \lambda g$, Markov's inequality and optimizing it can be shown that for all $t \ge 0$
	\begin{equation}\label{eqn:rechteFlanken}
	\mu( g - \IE_\mu g \ge t ) \le \exp\Big( - \frac{t^2}{2\rho b^2} \Big).
	\end{equation}
	Here, to obtain the factor $2$ in the denominator, one has to let $\alpha \to \infty$ in Theorem \ref{theorem:BG99}. Thus, the corollary follows easily from Theorem \ref{theorem:SecondOrderConcentration}.
\end{proof}

\begin{proof}[Proof of Proposition \ref{sq-chr}]
	We assume $b = 1$ which can be done by rescaling.
	
	First, observe that \cite[equation (2.4)]{BG99} holds for any positive function $g$, since the inequality $\Gamma(g^2) \le 2 g \Gamma(g)$ is sufficient to apply the argument given therein. Thus, for any positive function $g$ satisfying $\Gamma(g) \le 1$ it holds for $\lambda \in [0,(2\rho)^{-1})$
	\begin{equation}\label{(2.4)}
	\IE_\mu \exp(\lambda g^2) \le \exp\Big( \frac{\lambda}{1-2\rho \lambda} \IE_\mu g^2 \Big).
	\end{equation}
	So, by applying Theorem \ref{theorem:BG99} (with $\alpha = \rho)$ we have
	\[
	\IE_\mu \exp\Big(\frac{1}{2\rho}(f - \IE_\mu f) \Big) \le \IE_\mu \exp\Big( \frac{1}{4\rho} \Gamma(f)^2 \Big) \le \IE_\mu \exp\Big( \frac{1}{4\rho} g^2 \Big) \le \exp\Big( \frac{1}{2\rho} \IE_\mu g^2 \Big),
	\]
	which can also be applied to $\lambda f$ and $\lambda g$ instead of $f$ and $g$, for $\lambda \in [0,1]$. Thus, by Markov's inequality, for any $\lambda \in [0,1]$
	\[
	\mu(f - \IE_\mu f \ge t) \le \exp\Big( - \frac{\lambda t}{2\rho} + \frac{\lambda^2}{2\rho} \IE_\mu g^2 \Big).
	\]
	The claim follows by putting $\lambda = \min(\frac{t}{2 \IE_\mu g^2}, 1)$ and noting that if $t/(2\IE_\mu g^2) \ge 1$, we have $t - \IE_\mu g^2 \ge t/2$.
\end{proof}

\begin{proof}[Proof of Proposition \ref{proposition:SelfBounding}]
	Choosing $\alpha = \rho$ in Theorem \ref{theorem:BG99}, applying the inequality to $\lambda f$ and using the monotonicity leads to
	\begin{align*}
	\IE_{\mu} \exp\big( \lambda (f - \IE_\mu f) \big) \le \exp\big( \lambda^2 \rho (b + a\IE_\mu f) \big) \IE_{\mu} \exp\big( \lambda^2 \rho a(f - \IE_\mu f) \big).
	\end{align*}
	Thus for $\lambda \in (0,(a\rho)^{-1})$, by Jensen's inequality (applied to the concave function $x \mapsto x^{\lambda \rho a}$) we have
	\[
	\big( 1 - \lambda \rho a \big) \log\big( \IE_\mu \exp\big( \lambda(f - \IE_\mu f) \big) \big) \le \lambda^2 \rho(b + a\IE_\mu f).
	\]
	Finally, Markov's inequality and \cite[Lemma 11]{BLM03} yield the first inequality.
	
	To see the second inequality, note that for any $\lambda > 0$ such that $\lambda a \rho < 1$, by Theorem \ref{theorem:BG99} and concavity of $x \mapsto x^{\lambda a \rho}$, it holds
	\begin{align*}
	\IE_{\mu} \exp\big(\lambda(\IE_{\mu}f - f) \big) &\le \IE_\mu \exp\big( \rho \Gamma(-\lambda f)^2 \big) = \IE_\mu \exp \big( \lambda^2 \rho \Gamma(f)^2 \big) \\
	&\le \IE_\mu \exp\big( \lambda^2 \rho (af+b) \big) \\
	&= \exp\big(\lambda^2 \rho (a \IE_\mu f +b) \big) \IE_\mu \exp\big(\lambda^2 \rho a(f - \IE_\mu f) \big) \\
	&\le \exp\big(\lambda^2 \rho(a \IE_\mu f +b) \big) \Big( \IE_\mu \exp\big( \lambda(f - \IE_\mu f) \big) \Big)^{\lambda \rho a}.
	\end{align*}
	Finally, applying the estimates from the first part we obtain
	\[
	\IE_\mu \exp\big( \lambda(\IE_{\mu} f - f) \big) \le \exp\Big( \frac{\lambda^2\rho }{1-\lambda \rho a}(a \IE_\mu f + b) \Big).
	\]
	The concentration inequality follows as in the first part.
\end{proof}

\begin{proof}[Proof of Proposition \ref{proposition:mLSIforSymmetricGroup}]
	Using and rewriting \cite[Theorem 1]{GQ03} we obtain for any $f: S_n \to \IR$
	\[
	\mathrm{Ent}(e^f) \le \frac{1}{2n n!} \sum_{i,j = 1}^n \sum_{\sigma \in S_n} (f(\sigma \tau_{ij}) - f(\sigma))(e^{f(\sigma \tau_{ij})} - e^{f(\sigma)}).
	\]
	Now, the inequality $(a-b)(e^a - e^b) \le \frac{1}{2} (e^a + e^b)(a-b)^2$ and the fact that $\sigma \mapsto \sigma \tau_{ij}$ is an automorphism of $S_n$ leads to the $\Gamma\mathrm{-mLSI}(1)$. The $\Gamma^+\mathrm{-mLSI}(2)$ follows in the same manner from the inequality $(a-b)_+(e^a - e^b) \le (a-b)_+^2 e^a$.
\end{proof}

\begin{proof}[Proof of Theorem \ref{theorem:LocallyLipschitzSn}]
	By Proposition \ref{proposition:mLSIforSymmetricGroup} and Theorem \ref{theorem:BG99} we have for any $f: S_n \to \IR$, any $\lambda \in \IR$ and any $\alpha > 1/2$ the inequality
	\[
	\IE_{\pi_n} \exp\big( \lambda(f - \IE_{\pi_n} f) \big) \le \Big(\IE_{\pi_n} \exp\big( \alpha \lambda^2 \Gamma(f)^2 \big) \Big)^{\frac{1}{2\alpha-1}}.
	\]
	If $f$ is locally Lipschitz with respect to $d$, an easy calculation shows that we can upper bound $\Gamma(f)^2 \le \mathrm{ObsDiam}(S_n,d)$, so that from the above inequality in combination with $\alpha \to \infty$ we get
	\[
	\IE_{\pi_n} \exp\big( \lambda(f - \IE_{\pi_n} f) \big) \le \exp\big( \lambda^2\mathrm{ObsDiam}(S_n,d) /2 \big).
	\]
	The sub-Gaussian estimate follows by Markov's inequality and the variance bound from integration by parts.
\end{proof}

In order to prove Proposition \ref{Tcde}, we first need to establish the following lemma:

\begin{lemma}\label{lemfTcde}
	Let $f: S_n \to \IR$ be a non-negative function such that
	\begin{enumerate}
		\item $\Gamma^+(f)^2 \le f$,
		\item $\abs{f(\sigma) - f(\sigma \tau_{ij})} \le 1$ for all $\sigma, i,j$.
	\end{enumerate}
	Then for all $t \in [0, \IE_{\pi_n} f]$ we have
	\[
	\pi_n(\IE_{\pi_n} f - f \ge t) \le \exp\Big( -\frac{t^2}{8\IE_{\pi_n} f}. \Big)
	\]
	Especially we have
	\[
	\pi_n(f = 0)\exp\Big( \frac{\IE_{\pi_n} f}{8} \Big) \le 1.
	\]
	In particular, this holds for $f(\sigma) = \frac{1}{16} d_T(\sigma, A)^2$, where $A \subset S_n$ is any set.
\end{lemma}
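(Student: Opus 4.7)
My plan is to apply the $\Gamma^+\mathrm{-mLSI}(2)$ from Proposition \ref{proposition:mLSIforSymmetricGroup} to the function $-\lambda f$ with $\lambda \ge 0$, and then execute a Boucheron--Lugosi--Massart-style Herbst argument tailored to the lower tail. The awkward point is that
\[
\Gamma^+(-\lambda f)(\sigma)^2 = \lambda^2 n^{-1}\sum_{i,j}(f(\sigma\tau_{ij}) - f(\sigma))_+^2
\]
picks out the ``wrong-sided'' positive parts from the viewpoint of hypothesis (1). To remedy this I would symmetrise integrand by integrand: for each pair $(i,j)$ the map $\sigma\mapsto\sigma\tau_{ij}$ is a bijection preserving $\pi_n$, so the expectation of $(f(\sigma\tau_{ij})-f(\sigma))_+^2 e^{-\lambda f(\sigma)}$ equals that of $(f(\sigma')-f(\sigma'\tau_{ij}))_+^2 e^{-\lambda f(\sigma'\tau_{ij})}$. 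Wherever this positive part does not vanish, hypothesis (2) yields $f(\sigma'\tau_{ij})\ge f(\sigma') - 1$ and hence $e^{-\lambda f(\sigma'\tau_{ij})}\le e^\lambda e^{-\lambda f(\sigma')}$; combined with (1) this gives
\[
\mathrm{Ent}_{\pi_n}(e^{-\lambda f}) \le \lambda^2 e^\lambda \IE_{\pi_n}\Gamma^+(f)^2 e^{-\lambda f} \le \lambda^2 e^\lambda \IE_{\pi_n} f e^{-\lambda f},
\]
which on $\lambda\in[0,\log 2]$ simplifies to the self-bounding-type bound $\mathrm{Ent}_{\pi_n}(e^{-\lambda f}) \le 2\lambda^2\IE_{\pi_n} f e^{-\lambda f}$.

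From here the Herbst argument is routine. Writing $L(\lambda)=\log\IE_{\pi_n}e^{-\lambda f}$ and using $\mathrm{Ent}_{\pi_n}(e^{-\lambda f}) = \IE_{\pi_n} e^{-\lambda f}(\lambda L'(\lambda)-L(\lambda))$ together with $\IE_{\pi_n} f e^{-\lambda f}=-L'(\lambda)\IE_{\pi_n} e^{-\lambda f}$ converts the entropy bound into the differential inequality $L(\lambda)\ge\lambda(1+2\lambda)L'(\lambda)$ on $[0,\log 2]$. Integrating (handling the $1/\lambda$ singularity at zero via $L(\epsilon)=-\epsilon\IE_{\pi_n}f + o(\epsilon)$ and the partial fractions $1/[s(1+2s)] = 1/s - 2/(1+2s)$) yields $L(\lambda)\le -\lambda\IE_{\pi_n}f/(1+2\lambda)$, so
\[
\log\IE_{\pi_n}\exp\big(\lambda(\IE_{\pi_n}f - f)\big)\le\frac{2\lambda^2\IE_{\pi_n}f}{1+2\lambda}\le 2\lambda^2\IE_{\pi_n}f.
\]
A Chernoff bound optimised at $\lambda = t/(4\IE_{\pi_n}f)$, which for $t\in[0,\IE_{\pi_n}f]$ automatically lies in $[0,1/4]\subset[0,\log 2]$, produces the asserted $\exp(-t^2/(8\IE_{\pi_n}f))$. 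Specialising to $t=\IE_{\pi_n}f$ and using $f\ge 0$ gives the ``especially'' conclusion $\pi_n(f=0)\exp(\IE_{\pi_n}f/8)\le 1$.

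For the final assertion that $f = d_T(\cdot,A)^2/16$ satisfies (1) and (2), I would invoke Talagrand's convex-duality representation of the convex distance: for every $\sigma\in S_n$ there is an optimal measure $\nu^*_\sigma$ on $A$ with $d_T(\sigma,A)^2 = \sum_k \nu^*_\sigma(\{\omega' : \omega'(k)\ne\sigma(k)\})^2$. Since a transposition alters only two of the $n$ summands and each summand lies in $[0,1]$, comparing $d_T^2(\sigma,A)$ and $d_T^2(\sigma\tau_{ij},A)$ through a single witness measure immediately yields $|d_T^2(\sigma,A)-d_T^2(\sigma\tau_{ij},A)|\le 2$, and (2) holds with considerable slack. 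For (1), the same convex-dual bookkeeping---now carefully localising the contribution of the two affected coordinates and re-expressing the resulting sum in terms of $\nu^*_\sigma$---produces the self-bounding estimate $\Gamma^+(d_T^2)^2\le 16\, d_T^2$, and it is precisely this factor that the normalisation $d_T^2/16$ is designed to absorb. The main obstacle of the proof is this final convex-dual calculation identifying the constant $16$; once (1) and (2) are in hand, the lower-tail inequality follows from the clean Herbst reduction outlined above.
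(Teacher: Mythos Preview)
Your proof is correct and follows the same overall skeleton as the paper: establish $\mathrm{Ent}_{\pi_n}(e^{-\lambda f}) \le 2\lambda^2\,\IE_{\pi_n} f\,e^{-\lambda f}$ for small $\lambda$, run a Herbst argument, optimise the Chernoff bound at $\lambda=t/(4\IE_{\pi_n}f)$, and verify conditions (1)--(2) for $d_T^2/16$ via the Sion minimax representation. There are, however, two minor technical differences worth noting. First, to obtain the ``correct-sided'' positive parts $(f(\sigma)-f(\sigma\tau_{ij}))_+$, the paper does not start from the $\Gamma^+$-mLSI but goes back to the Gao--Quastel inequality in its $(g-g')(\log g-\log g')$ form and then uses $e^x-1\le 2x$ on $[0,1]$ together with hypothesis~(2); your symmetrisation device $\sigma\mapsto\sigma\tau_{ij}$ followed by $e^{-\lambda f(\sigma'\tau_{ij})}\le e^\lambda e^{-\lambda f(\sigma')}$ and $e^\lambda\le 2$ is an equivalent and equally clean alternative. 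Second, in the Herbst step the paper decouples via Chebyshev's association inequality, $\IE_{\pi_n} f e^{-\lambda f}\le \IE_{\pi_n} f\cdot\IE_{\pi_n} e^{-\lambda f}$, and then integrates $(\log h(\lambda)/\lambda)'\le 2\IE_{\pi_n}f$, whereas you integrate the coupled differential inequality $L(\lambda)\ge\lambda(1+2\lambda)L'(\lambda)$ directly using $L\le 0$ (valid since $f\ge 0$); your route yields the slightly sharper intermediate bound $2\lambda^2\IE_{\pi_n}f/(1+2\lambda)$ but the same final tail. For the convex-distance part the paper makes the intermediate step $\Gamma^+(d_T(\cdot,A))^2\le 4$ explicit and then applies $\Gamma^+(g^2)\le 2g\,\Gamma^+(g)$ to reach $\Gamma^+(d_T^2)^2\le 16\,d_T^2$, which is exactly the estimate you assert.
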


\begin{proof}[Proof of Lemma \ref{lemfTcde}]
	Rewriting \cite[Theorem 1]{GQ03}, we have that for any positive function $g$,
	\begin{align*}
	\mathrm{Ent}_{\pi_n}(g)
	&\le \frac{1}{n!} \frac{1}{2n} \sum_{i,j} \sum_{\sigma \in S_n} (g(\sigma\tau_{ij}) - g(\sigma))(\log g(\sigma \tau_{ij}) - \log g(\sigma)) \\
	&= \frac{1}{n!} \frac{1}{n} \sum_{i,j} \sum_{\sigma \in S_n} (g(\sigma \tau_{ij}) - g(\sigma)) (\log g(\sigma\tau_{ij}) - \log g(\sigma))_+.
	\end{align*}
	Using this, we obtain for any $\lambda \in [0,1]$
	\begin{align*}
	\mathrm{Ent}_{\pi_n}(e^{-\lambda f}) &\le \frac{\lambda}{n} \IE_{\pi_n} \sum_{i,j} (f(\sigma) - f(\sigma \tau_{ij}))_+ \big( \exp(-\lambda f(\sigma \tau_{ij})) - \exp(-\lambda f(\sigma)) \big) \\
	&\le \frac{\lambda}{n} \IE_{\pi_n} \sum_{i,j} (f(\sigma) - f(\sigma \tau_{ij}))_+ (\exp(\lambda(f(\sigma) - f(\sigma \tau_{ij})))-1) e^{-\lambda f(\sigma)} \\
	&\le \frac{\lambda}{n} \IE_{\pi_n} \sum_{i,j} (f(\sigma) - f(\sigma \tau_{ij}))_+ \Psi(\lambda(f(\sigma) - f(\sigma \tau_{ij}))) e^{-\lambda f(\sigma)},
	\end{align*}
	where $\Psi(x) \coloneqq e^x - 1$. By a Taylor expansion it can easily be seen that $\Psi(x) \le 2x$ for all $x \in [0,1]$, so that (recall that by $(2)$ we have $f(\sigma) - f(\sigma \tau_{ij}) \le 1$, and $f(\sigma) - f(\sigma \tau_{ij}) \ge 0$ due to the positive part)
	\begin{align*}
	\mathrm{Ent}_{\pi_n}(e^{-\lambda f}) &\le \frac{2\lambda^2}{n}  \IE_{\pi_n} \sum_{i,j} (f(\sigma) - f(\sigma \tau_{ij}))_+^2 e^{-\lambda f(\sigma)}\\ &= 2\lambda^2 \IE_{\pi_n} \Gamma^+(f)^2 e^{-\lambda f} \le 2\lambda^2 \IE_{\pi_n} f e^{-\lambda f}.
	\end{align*}
	Chebyshev's association inequality yields
	\[
	\mathrm{Ent}_{\pi_n}(e^{-\lambda f}) \le 2\lambda^2 \IE_{\pi_n} f \IE_{\pi_n} e^{-\lambda f}.
	\]
	In other terms, if we set $h(\lambda) \coloneqq \IE_{\pi_n} e^{-\lambda f}$, we have
	\[
	\Big( \frac{\log h(\lambda)}{\lambda} \Big)' \le 2\IE_{\pi_n} f,
	\]
	which by the fundamental theorem of calculus implies for all $\lambda \in [0,1]$
	\[
	\IE_{\pi_n} \exp\Big( \lambda(\IE_{\pi_n} f - f) \Big) \le \exp\Big( 2\lambda^2 \IE_{\pi_n} f \Big).
	\]
	So, for any $t \in [0, \IE_{\pi_n} f]$, by Markov's inequality and setting $\lambda = \frac{t}{4 \IE_{\pi_n} f}$
	\[
	\pi_n(\IE_{\pi_n} f - f \ge t) \le \exp\Big(-\lambda t + 2\lambda^2 \IE_{\pi_n} f\Big) = \exp\Big( - \frac{t^2}{8\IE_{\pi_n} f} \Big).
	\]
	The second part follows by nonnegativity and $t = \IE_{\pi_n} f$.
	
	It remains to show that $f(\sigma) = \frac{1}{16} d_T(\sigma, A)^2$ satisfies the two conditions of this lemma. To this end, we first need to show that $\Gamma^+(d_T(\cdot,A))^2 \le 4$. Writing $g(\sigma) \coloneqq d_T(\sigma, A)$, it is well known (see \cite{BLM03}) that we have
	\begin{equation}\label{Sion}
	g(\sigma) = \inf_{\nu \in \mathcal{M}(A)} \sup_{\alpha \in \IR^n : \abs{\alpha}_2 = 1} \sum_{k = 1}^n \alpha_k \nu(\sigma' : \sigma'_k \neq \sigma_k),
	\end{equation}
	where $\mathcal{M}(A)$ is the set of all probability measures on $A$. To estimate $\Gamma^+(g)^2(\sigma)$, one has to compare $g(\sigma)$ and $g(\sigma \tau_{ij})$. To this end, for any $\sigma \in S_n$ fixed, let $\tilde{\alpha}, \tilde{\nu}$ be parameters for which the value $g(\sigma)$ is attained, and let $\hat{\nu} = \hat{\nu}_{ij}$ be a minimizer of $\inf_{\nu \in \mathcal{M}(A)} \sum_{k = 1}^n \tilde{\alpha}_k \nu(\sigma' : \sigma'_k \neq (\sigma \tau_{ij})_k)$. This leads to
	\begin{align*}
	\Gamma^+ (g)(\sigma)^2 
	&\le \frac{1}{n} \sum_{i,j = 1}^n \Big( \sum_{k = 1}^n \tilde{\alpha}_k (\hat{\nu}(\sigma'_k \neq \sigma_k) - \hat{\nu}(\sigma'_k \neq (\sigma \tau_{ij})_k))\Big)_+^2 \\
	&\le \frac{2}{n} \sum_{i,j = 1}^n (\tilde{\alpha}_i^2 + \tilde{\alpha}_j^2)
	\le 4.
	\end{align*}
	Using this and the non-negativity of $d_T(\cdot, A)$, we have 
	\[
	\Gamma^+(f)^2 = \frac{1}{256} \Gamma^+(d_T(\cdot, A)^2)^2 \le \frac{1}{64} d_T(\cdot,A)^2 \Gamma^+(d_T(\cdot,A))^2 \le f.
	\]
	
	To show the second property, we proceed similarly to \cite[Proof of Lemma 1]{BLM09}. By \eqref{Sion} and the Cauchy--Schwarz inequality, we have
	\[
	f(\sigma) = \frac{1}{16} \inf_{\nu \in \mathcal{M}(A)} \sum_{k = 1}^n \nu(\sigma' : \sigma'_k \neq \sigma_k)^2.
	\]
	Assuming without loss of generality that $f(\sigma) \ge f(\sigma \tau_{ij})$, choose $\hat{\nu} = \hat{\nu}_{ij} \in \mathcal{M}(A)$ such that the value of $f(\sigma \tau_{ij})$ is attained. It follows that
	\begin{equation*}
	f(\sigma) - f(\sigma \tau_{ij}) \le \frac{1}{16}\sum_{k=1}^n \hat{\nu}(\sigma'_k \neq \sigma_k)^2 - \hat{\nu} (\sigma'_k \neq (\sigma\tau_{ij})_k)^2 \le \frac{2}{16},
	\end{equation*}
	which finishes the proof.
\end{proof}

The proof of Proposition \ref{Tcde} is now easily completed:

\begin{proof}[Proof of Proposition \ref{Tcde}]
	The difference operator $\Gamma^+$ satisfies $\Gamma^+(g^2) \le 2g \Gamma^+(g)$ for all positive functions $g$, as well as an $\mathrm{mLSI}(2)$. Moreover, as seen in the proof of Lemma \ref{lemfTcde}, we have $\Gamma^+(d_T(\cdot, A)) \le 2$. Thus, by \eqref{(2.4)} it holds for $\lambda \in [0,1/4)$
	\[
	\pi_n(A) \IE_{\pi_n} \exp\Big( \frac{\lambda}{4} d_T(\cdot,A)^2 \Big) \le \pi_n(A) \exp\Big( \frac{\lambda}{4-16\lambda} \IE_{\pi_n} d_T(\cdot,A)^2 \Big).
	\]
	Furthermore, Lemma \ref{lemfTcde} shows that
	\[
	\pi_n(A) \exp\Big( \frac{\IE_{\pi_n} d_T(\cdot,A)^2}{128} \Big) \le 1.
	\]
	So, for $\lambda = 1/36$ we have
	\[
	\pi_n(A) \IE_{\pi_n} \exp\Big( \frac{d_T(\cdot,A)^2}{144} \Big) \le \pi_n(A) \exp\Big( \frac{1}{128} \IE_{\pi_n} d_T(\cdot,A)^2 \Big) \le 1.
	\]
\end{proof}

\begin{proof}[Proof of Proposition \ref{proposition:Talagrand}]
	Again, the proof mimics the proof given for independent random variables in \cite{BLM03}. As stated in Proposition \ref{proposition:mLSIforSymmetricGroup}, the uniform measure $\pi_n$ on $S_n$ satisfies a $\Gamma^+\mathrm{-mLSI}(2)$ with respect to
	\[
	\Gamma^+(f)(\sigma)^2 = \frac{1}{n} \sum_{i,j = 1}^n (f(\sigma) - f(\sigma \tau_{ij}))_+^2.
	\]
	Writing $f_A(\sigma) \coloneqq d_T(\sigma, A)$, we have $\Gamma^+ (f_A)(\sigma)^2 \le 4$ as seen in the proof of Lemma \ref{lemfTcde}.
	Hence, by similar arguments as in the proof of Theorem \ref{theorem:SecondOrderConcentration} we have for any $\lambda \ge 0$
	\begin{align}\label{eqn:ExponentialIneqfA}
	\begin{split}
	\IE_{\pi_n} \exp\big( \lambda(f_A - \IE_{\pi_n} f_A) \big) &\le \exp( 4\lambda^2),
	\end{split}
	\end{align}
	implying the sub-Gaussian estimate
	$
	\pi_n(f_A - \IE_{\pi_n} f_A \ge t) \le \exp(- t^2/16).
	$
	Fix a set $A \subseteq S_n$ satisfying $\pi_n(A) \ge 1/2$. As a $\Gamma\mathrm{-mLSI}(1)$ implies a Poincar{\'e} inequality (see \cite[Proposition 3.5]{BT06} or \cite{DSC96}), we also have (by Chebyshev's inequality)
	\[
	t^2 \pi_n\big( f_A - \IE_{\pi_n} f_A \le -t \big) \le \mathrm{Var}_{\pi_n}(f_A) \le 2 \IE_{\pi_n} \Gamma^+(f_A)^2 \le 8,
	\]
	which evaluated at $t = \IE_{\pi_n} f_A$ yields $(\IE_{\pi_n} f_A)^2 \le 16$. Thus, for any $t \ge 4$ it holds
	\begin{equation} \label{eqn:SnTalagrand}
	\pi_n(f_A \ge t) \le \exp\Big( - \frac{(t-4)^2}{16} \Big) \le 2 \exp\Big( - \frac{t^2}{64} \Big),
	\end{equation}
	where the last inequality follows from $(t-4)^2 \ge t^2/2 - 16$ for any $t \ge 0$ and \eqref{eqn:constantAdjustment}. For $t \le 4$ the inequality \eqref{eqn:SnTalagrand} holds trivially.
\end{proof}

The proofs of the results for slices of the hypercube work in a very similar way.

\begin{proof}[Proof of Proposition \ref{proposition:mLSIforBernoulliLaplace}]
	It follows from \cite[Theorem 1]{GQ03} that we have for any $f: C_{n,r} \to \IR$
	\[
	\mathrm{Ent}(e^f) \le \frac{1}{n \binom{n}{r}} \sum_{\eta \in C_{n,r}} \sum_{i < j} (f(\tau_{ij}\eta) - f(\eta))(e^{f(\tau_{ij}\eta)} - e^{f(\eta)}).
	\]
	From here, we may process as in the proof of Proposition \ref{proposition:mLSIforSymmetricGroup}.
\end{proof}

For the proof of Proposition \ref{TcdeBL}, we need to establish the following analogue of Lemma \ref{lemfTcde}:

\begin{lemma}\label{lemfTcdeBL}
	Let $f: C_{n,r} \to \IR$ be a non-negative function such that
	\begin{enumerate}
		\item $\Gamma^+(f)^2 \le f$,
		\item $\abs{f(\eta) - f(\tau_{ij}\eta)} \le 1$ for all $\eta, i,j$.
	\end{enumerate}
	Then for all $t \in [0, \IE_{\mu_{n,r}} f]$ we have
	\[
	\mu_{n,r}(\IE_{\mu_{n,r}} f - f \ge t) \le \exp\Big( -\frac{t^2}{8\IE_{\mu_{n,r}} f}. \Big)
	\]
	Especially we have
	\[
	\mu_{n,r}(f = 0)\exp\Big( \frac{\IE_{\mu_{n,r}} f}{8} \Big) \le 1.
	\]
	In particular, this holds for $f(\eta) = \frac{1}{32} d_T(\eta, A)^2$, where $A \subset C_{n,r}$ is any set.
\end{lemma}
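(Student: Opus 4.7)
The plan is to mirror the proof of Lemma \ref{lemfTcde} step by step, replacing each ingredient used for the symmetric group by its $C_{n,r}$ counterpart. The symmetry argument for the entropy inequality, the Taylor bound $\Psi(x) = e^x - 1 \le 2x$ on $[0,1]$, Chebyshev's association inequality, and the differential inequality for the Laplace transform of $-f$ all transfer unchanged. The only genuinely new ingredient is the $C_{n,r}$ analog of the bound $\Gamma^+(d_T(\cdot,A))^2 \le 4$ and the identification of the appropriate entropy inequality on the slice.

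First I would plug $g = e^{-\lambda f}$ into the entropy inequality from \cite[Theorem 1]{GQ03} (as used in the proof of Proposition \ref{proposition:mLSIforBernoulliLaplace}) and symmetrize via the involution $\eta \leftrightarrow \tau_{ij}\eta$ so that only $(f(\eta)-f(\tau_{ij}\eta))_+$ enters. Condition (2) ensures $\lambda (f(\eta)-f(\tau_{ij}\eta))_+ \in [0,\lambda] \subseteq [0,1]$ for $\lambda \in [0,1]$, so $\Psi$ can be replaced by $2x$. The resulting sum $\sum_{i<j} (f(\eta)-f(\tau_{ij}\eta))_+^2$ equals $(n/2)\,\Gamma^+(f)^2$ because $\eta_i(1-\eta_j)$ picks out exactly the pairs with $\eta_i \neq \eta_j$, while $\tau_{ij}$ acts trivially when $\eta_i = \eta_j$; the extra factor $2/n$ in the definition of $\Gamma^+$ on $C_{n,r}$ cancels precisely with the restriction $i<j$. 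Condition (1) then gives $\mathrm{Ent}_{\mu_{n,r}}(e^{-\lambda f}) \le 2\lambda^2\,\IE_{\mu_{n,r}} \Gamma^+(f)^2 e^{-\lambda f} \le 2\lambda^2\,\IE_{\mu_{n,r}} f e^{-\lambda f}$, and Chebyshev's association inequality produces $\mathrm{Ent}_{\mu_{n,r}}(e^{-\lambda f}) \le 2\lambda^2 \IE_{\mu_{n,r}} f \cdot \IE_{\mu_{n,r}} e^{-\lambda f}$. Viewing this as $(\lambda^{-1}\log \IE e^{-\lambda f})' \le 2\IE f$ and integrating from $0$ yields
\[
\IE_{\mu_{n,r}}\exp\big(\lambda (\IE_{\mu_{n,r}} f - f)\big) \le \exp\big(2\lambda^2 \IE_{\mu_{n,r}} f\big), \qquad \lambda \in [0,1].
\]
Markov's inequality at $\lambda = t/(4\IE_{\mu_{n,r}} f)$ is valid for $t \in [0,\IE_{\mu_{n,r}} f]$ and gives the first claim; setting $t=\IE_{\mu_{n,r}} f$ yields the second.

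For the convex distance example with $f = d_T(\cdot,A)^2/32$, I would verify both hypotheses by adapting the $S_n$ calculation. Using the minimax representation $d_T(\eta,A) = \inf_\nu \sup_\alpha \sum_k \alpha_k \nu(\eta'_k \neq \eta_k)$ and choosing a maximizer $\tilde\alpha$ at $\eta$ together with a minimizer $\hat\nu_{ij}$ of $\sum_k \tilde\alpha_k \nu(\eta'_k \neq (\tau_{ij}\eta)_k)$, the $k \notin \{i,j\}$ terms cancel, leaving $(d_T(\eta,A) - d_T(\tau_{ij}\eta,A))_+ \le |\tilde\alpha_i| + |\tilde\alpha_j|$ and hence $(\cdot)_+^2 \le 2(\tilde\alpha_i^2+\tilde\alpha_j^2)$. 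Summing against $(2/n)\eta_i(1-\eta_j)$ and exploiting $\sum_i \eta_i = r$, $\sum_j(1-\eta_j)=n-r$, together with $|\tilde\alpha|_2=1$, gives $\Gamma^+(d_T(\cdot,A))^2 \le 4$ (the factor $2/n$ precisely balances the two disjoint contributions). The chain-rule-type identity $\Gamma^+(g^2) \le 2g\,\Gamma^+(g)$ (valid for $g\ge 0$ since $(a^2-b^2)_+ = (a-b)_+(a+b)$ for $a,b\ge 0$) then yields $\Gamma^+(f)^2 \le (16/32^2) d_T^2 \cdot 4 \le d_T^2/32 = f$, which is condition (1). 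Condition (2) follows from the Cauchy--Schwarz form $d_T(\cdot,A)^2 = \inf_\nu \sum_k \nu(\eta'_k\neq\eta_k)^2$ (cf.\ \cite[Proof of Lemma 1]{BLM09}): only the $k\in\{i,j\}$ terms change, each by at most $1$, so $|f(\eta)-f(\tau_{ij}\eta)| \le 2/32 \le 1$.

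The only real obstacle is the combinatorial bookkeeping on the slice: keeping track of how the factor $2/n$ in the definition of $\Gamma^+$ on $C_{n,r}$, the restriction $i<j$ in the Goel--Quastel-type entropy bound, and the indicator $\eta_i(1-\eta_j) \in \{0,1\}$ interact so as to produce exactly the same differential inequality as in the $S_n$ setting. Once this is verified, every subsequent step is identical to the proof of Lemma \ref{lemfTcde}.
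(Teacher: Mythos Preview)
Your proposal is correct and follows essentially the same route as the paper: rewrite the Gao--Quastel entropy inequality on $C_{n,r}$, symmetrize to introduce the positive part, use $\Psi(x)\le 2x$ on $[0,1]$ together with condition (2), invoke condition (1) and Chebyshev's association inequality, and integrate the resulting differential inequality for $\lambda^{-1}\log\IE e^{-\lambda f}$; then verify the two hypotheses for $f=\tfrac{1}{32}d_T(\cdot,A)^2$ via the minimax representation and the chain-rule bound $\Gamma^+(g^2)\le 2g\,\Gamma^+(g)$.

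The only (harmless) discrepancy is quantitative: you obtain $\Gamma^+(d_T(\cdot,A))^2\le 4$, whereas the paper records the looser bound $\Gamma^+(d_T(\cdot,A))^2\le 8$. Your computation via the representation $\Gamma^+(f)^2=\tfrac{2}{n}\sum_{i,j}\eta_i(1-\eta_j)(\cdots)_+^2$ and the identity $\sum_i\eta_i\tilde\alpha_i^2+\sum_j(1-\eta_j)\tilde\alpha_j^2=\abs{\tilde\alpha}_2^2=1$ indeed gives $4$; the paper's $8$ is simply a cruder estimate attributed to ``different normalization''. Either bound suffices to check $\Gamma^+(f)^2\le f$ for $f=\tfrac{1}{32}d_T^2$, so the lemma statement is unaffected.
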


\begin{proof}[Proof of Lemma \ref{lemfTcdeBL}]
	Rewriting \cite[Theorem 1]{GQ03}, we have that for any positive function $g$,
	\begin{align*}
	\mathrm{Ent}_{\mu_{n,r}}(g)
	&\le \frac{1}{\binom{n}{r}} \frac{1}{n} \sum_{i<j} \sum_{\eta \in C_{n,r}} (g(\tau_{ij}\eta) - g(\eta))(\log g(\tau_{ij}\eta) - \log g(\eta)) \\
	&= \frac{1}{\binom{n}{r}} \frac{2}{n} \sum_{i<j} \sum_{\eta \in C_{n,r}} (g(\tau_{ij}\eta) - g(\eta)) (\log g(\tau_{ij}\eta) - \log g(\eta))_+.
	\end{align*}
	From here, we may mimic the proof of Lemma \ref{lemfTcde}.
	
	Last, we need to show that $f(\eta) = \frac{1}{32} d_T(\eta, A)^2$ satisfies the two conditions of this lemma. As compared to the proof of Lemma \ref{lemfTcde}, some of the constants will change because of the different normalization of the difference operators. However, we may argue similarly and show that $\Gamma^+(d_T(\cdot,A))^2 \le 8$. Using this and the non-negativity of $d_T(\cdot, A)$ yields
	\[
	\Gamma^+(f)^2 = \frac{1}{1024} \Gamma^+(d_T(\cdot, A)^2)^2 \le \frac{1}{256} d_T(\cdot,A)^2 \Gamma^+(d_T(\cdot,A))^2 \le f.
	\]
	Finally, by arguing as above it is easily seen that $\abs{f(\eta) - f(\tau_{ij}\eta)} \le 2/32$.
\end{proof}

\begin{proof}[Proof of Proposition \ref{TcdeBL}]
	As the difference operator $\Gamma^+$ satisfies $\Gamma^+(g^2) \le 2g \Gamma^+(g)$ for all positive functions $g$, as well as an $\mathrm{mLSI}(2)$, it remains to change the proof of Proposition \ref{Tcde} in view of the different constants appearing in Lemma \ref{lemfTcdeBL}. As noted in the proof of Lemma \ref{lemfTcdeBL}, we have $\Gamma^+(d_T(\cdot, A)) \le \sqrt{8}$. Thus, by \eqref{(2.4)} it holds for $\lambda \in [0,1/4)$
	\[
	\mu_{n,r}(A) \IE_{\mu_{n,r}} \exp\Big( \frac{\lambda}{8} d_T(\cdot,A)^2 \Big) \le \mu_{n,r}(A) \exp\Big( \frac{\lambda}{8-32\lambda} \IE_{\mu_{n,r}} d_T(\cdot,A)^2 \Big).
	\]
	Furthermore, Lemma \ref{lemfTcdeBL} shows that
	\[
	\mu_{n,r}(A) \exp\Big( \frac{\IE_{\mu_{n,r}} d_T(\cdot,A)^2}{256} \Big) \le 1.
	\]
	So, for $\lambda = 1/68$ we have
	\[
	\mu_{n,r}(A) \IE_{\mu_{n,r}} \exp\Big( \frac{d_T(\cdot,A)^2}{544} \Big) \le \mu_{n,r}(A) \exp\Big( \frac{1}{256} \IE_{\mu_{n,r}} d_T(\cdot,A)^2 \Big) \le 1.
	\]
\end{proof}

Finally, we present the proofs of Section \ref{section:Applications}.

\begin{proof}[Proof of Proposition \ref{proposition:HomogeneousPolynomials}]
	We show that $f$ is weakly $(k\mathrm{ML}(f),0)$-self bounding in the language of \cite{BLM09}. To see this, for any $v \in V$ let $f_v(x_{v^c}) \coloneqq \sum_{e \in E : v \notin E} w_e X_e = f(X_{v^c}, 0)$. Now we have
	\begin{align*}
	\sum_{v \in V} (f(x) - f_v(x_{v^c}))^2 &= \sum_{v \in V} \Big( X_v \sum_{e \in E : v \in e} w_e X_{e \backslash v} \Big)^2 \le \sum_{v \in V} X_v \partial_v f(X)^2 \\
	&\le \mathrm{ML}(f) \sum_{v \in V} X_v \partial_v f(X) \le k \mathrm{ML}(f) f(X).
	\end{align*}
	Here, the first inequality follows from $X_v \in [0,1]$ and the last one is a consequence of Euler's homogeneous function theorem and the fact that all quantities involved are positive. Consequently, \cite[Theorem 1]{BLM09} yields for any $t \ge 0$
	\[
	\IP(f(X) - \IE f(X) \ge t) \le \exp\Big( - \frac{t^2}{2k\mathrm{ML}(f)(\IE f(X) + t/2)} \Big).
	\]
	For the lower bound, apply \cite[Theorem 1]{BLM09} to $\tilde{f} = \mathrm{ML}(f)^{-1} f$ which satisfies $0 \le \tilde{f}(x) - \tilde{f}_v(x_{v^c}) \le 1$  for all $v \in V$ and $x \in [0,1]^V$ and is weakly $(k\mathrm{ML}(f)^{-1},0)$-self bounding.
\end{proof}

\begin{proof}[Proof of Proposition \ref{proposition:Suprema01ValuedRV}]
	The first part follows as above. As for the second part, if we choose $\mathcal{F} = \mathcal{F}_q = \{ a \in \IR^V : a_v \ge 0, \norm{a}_q \le 1 \}$ for some $q \in [1,\infty]$ this leads to 
	\[
	f_{\mathcal{F}}(X) = \sup_{a \in \mathcal{F}_q} \sum_{v \in V} a_v X_v = \Big( \sum_{v \in V} \abs{X_v}^p \Big)^{1/p}
	\] 
	for the H{\"o}lder conjugate $p$, which is due to the nonnegativity of the $X_i$ and the dual formulation of the $L^p$ norm in $\IR^V$.
\end{proof}

\begin{proof}[Proof of Proposition \ref{proposition:DRuns}]
	Clearly, $f_d$ is $d$-homogeneous and has positive weights in the sense of \eqref{eqn:Definitionf}, if we set $V = [n]$ and $E = \{ \{j, j+1, \ldots, j+d-1 \}, j = 1,\ldots, n\}$, $w_e = 1$. Furthermore, the partial derivatives can be easily bounded: For any fixed $l \in [n]$ there are exactly $d$ terms which depend on $X_l$, and the product is bounded by $1$. Consequently, $\mathrm{ML}(f_d) = \max_{l \in [n]} \max_{x \in [0,1]^n} \partial_l f(X) = d.$ Thus, Proposition \ref{proposition:HomogeneousPolynomials} yields for all $t \ge 0$
	\[
	\IP(f_d(X) - \IE f_d(X) \ge t) \le \exp\Big( - \frac{t^2}{2d^2(\IE f_d(X) + t/2)} \Big).
	\]
	The assertion now follows, if we note that $\IE f_d(X) = n \eta^d$.
\end{proof}

Let us now prove the results from Section \ref{subsection:WeaklyDependentMeasures}. To this end, we first need to establish some basic properties of modified logarithmic Sobolev inequalities with respect to the difference operators we use.

\begin{lemma}\label{mLSIs}
	Let $\mu$ be a probability measure on a product of Polish spaces $\mathcal{X} = \otimes_{i = 1}^n \mathcal{X}_i$ which satisfies a $\mathfrak{d}\mathrm{-mLSI}(\sigma^2)$. Then, $\mu$ also satisfies a $\mathfrak{d}^+\mathrm{-mLSI}(2\sigma^2)$.
\end{lemma}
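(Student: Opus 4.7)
The plan is to reduce the statement to the weighted comparison
\begin{equation*}
\IE_\mu \abs{\dpartial f}^2 e^f \le 2\, \IE_\mu \abs{\dpartial^+ f}^2 e^f,
\end{equation*}
after which the assumed $\dpartial\mathrm{-mLSI}(\sigma^2)$ immediately gives $\mathrm{Ent}_\mu(e^f) \le \tfrac{\sigma^2}{2}\IE_\mu \abs{\dpartial f}^2 e^f \le \sigma^2 \IE_\mu \abs{\dpartial^+ f}^2 e^f$, which is exactly a $\dpartial^+\mathrm{-mLSI}(2\sigma^2)$.

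To prove the comparison I would use a swap-symmetry argument, coordinate by coordinate. Fix $i$; under the joint measure $\mu(dx)\,\mu(dx_i'\mid x_{i^c})$ the pair $(x_i,x_i')$ is exchangeable, both being drawn from $\mu(\cdot\mid x_{i^c})$ conditionally on $x_{i^c}$. Interchanging $(x_i,x_i')$ in the integrand $e^{f(x)}(f(x)-f(x_{i^c},x_i'))^2$ therefore preserves the integral, and averaging over the two presentations produces the symmetric weight $\tfrac12(e^{f(x)}+e^{f(x_{i^c},x_i')})$. Decomposing $(a-b)^2=(a-b)_+^2+(b-a)_+^2$ and applying the swap once more to the $(b-a)_+^2$-piece, the two halves combine into
\begin{equation*}
\IE_\mu \int (e^{f(x)}+e^{f(x_{i^c},x_i')})(f(x)-f(x_{i^c},x_i'))_+^2\, d\mu(x_i'\mid x_{i^c}).
\end{equation*}

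The decisive pointwise observation then closes the argument: on the set where $(f(x)-f(x_{i^c},x_i'))_+>0$ one has $f(x)>f(x_{i^c},x_i')$, hence $e^{f(x)}+e^{f(x_{i^c},x_i')}\le 2e^{f(x)}$ on the support of the integrand. Applying this bound and summing over $i$ yields the weighted comparison, and thence the lemma. I do not anticipate a serious obstacle here; the only mildly delicate point is to keep the swap argument honest, that is, to invoke exchangeability only when both $x_i$ and $x_i'$ are integrated against the conditional law $\mu(\cdot\mid x_{i^c})$, which is precisely what the definition of $\abs{\dpartial f}$ and $\abs{\dpartial^+ f}$ allows.
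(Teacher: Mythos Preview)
Your proposal is correct and follows essentially the same approach as the paper: both arguments decompose $(a-b)^2=(a-b)_+^2+(b-a)_+^2$, exploit the exchangeability of $(x_i,x_i')$ under $\mu(\cdot\mid x_{i^c})$, and use the pointwise bound $e^{f(x_{i^c},x_i')}\le e^{f(x)}$ on the support of $(f(x)-f(x_{i^c},x_i'))_+$. The only cosmetic difference is the order of steps---the paper decomposes first and then bounds $e^{g(x)}\le e^{g(y)}$ on the $(g(y)-g(x))_+^2$ piece before swapping, whereas you symmetrize the weight first---but the content is identical.
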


\begin{proof}
	Let $(\Omega, \mathcal{F}, \nu)$ be a probability space and $g$ a measurable function on it. Then,
	\begin{align*}
	&\iint (g(x) - g(y))^2 d\nu(y) e^{g(x)} d\nu(x)\\
	= \ &\iint \big((g(x) - g(y))_+^2e^{g(x)} + (g(y) - g(x))_+^2e^{g(x)}\big) d\nu(y) d\nu(x)\\
	\le \ &\iint \big((g(x) - g(y))_+^2e^{g(x)} + (g(y) - g(x))_+^2e^{g(y)}\big) d\nu(y) d\nu(x)\\
	= \ &2 \iint (g(x) - g(y))_+^2 d\nu(y) e^{g(x)} d\nu(x).
	\end{align*}
	Applying this to $\nu = \mu(\cdot \mid x_{i^c})$ and $g = f(x_{i^c}, \cdot)$ for any $i = 1, \ldots, n$ yields
	\[
	\int \abs{\dpartial f}^2 e^f d\mu \le 2 \int \abs{\dpartial^+ f}^2 e^f d\mu,
	\]
	which finishes the proof.
\end{proof}

Also note that by monotonicity a $\mathfrak{d}-\mathrm{mLSI}(\sigma^2)$ implies an $\mathfrak{h}-\mathrm{mLSI}(\sigma^2)$, and the same holds for $\mathfrak{d}^+$ and $\mathfrak{h}^+$. Moreover, we recall the duality formula $\abs{x} = \sup_{y \in S^{n-1}} \skal{x,y}$.

\begin{proof}[Proof of Proposition \ref{proposition:WeaklyDependentRandomVariables}]
	First, \eqref{d-result} follows by applying Theorem \ref{theorem:SecondOrderConcentration} to $g = \abs{\dpartial f}$ and noting that $\abs{\dpartial(af)} = \abs{a} \abs{\dpartial f}$ for all $a \in \IR$. To see that $g$ is sub-Gaussian with parameter $K = \sqrt{2\sigma^2}b$ and $C=1$, note that by Lemma \ref{mLSIs}, $\mu$ satisfies a $\dpartial^+\mathrm{-mLSI}(2\sigma^2)$, so that we can use \eqref{eqn:rechteFlanken}.
	
	The same arguments are valid for $\mathfrak{h}^+$ and $\mathfrak{h}$ respectively. Here, we additionally use the estimate $\abs{\mathfrak{h}^+ \abs{\mathfrak{h} f}} \le \abs{\mathfrak{h}^{(2)}f}_{\mathrm{op}}$ (cf. \cite[Lemma 3.2]{GSS18b}).
\end{proof}

\begin{proof}[Proof of Proposition \ref{proposition:HansonWright}]
	Let us bound $\abs{\dpartial^+ h}^2$. Choose the matrix $\tilde{A} \in \mathcal{A}$ maximizing $\sup_{A \in \mathcal{A}} \skal{x,Ax}$ and use the monotonicity of $y \mapsto y_+$ to obtain
	\begin{align*}
	\abs{\dpartial^+ h(x)}^2 &= \sum_{i = 1}^n \int (g(x) - g(x_{i^c}, x_i'))_+^2 d\mu(x_i' \mid x_{i^c}) \le \sum_{i = 1}^n \sup_{x_i'} \Big( 2(x_i - x_i') \sum_{j = 1}^n \tilde{A}_{ij} x_j \Big)_+^2 \\
	&\le 16 \norm{\tilde{A} x}_2^2 \le16 \sup_{A \in \mathcal{A}} \norm{Ax}_2^2 = 16 f_{\mathcal{A}}^2(x).
	\end{align*}
	Furthermore, we have for some maximizer $\tilde{A} \in \mathcal{A}$ of $\sup_{A \in \mathcal{A}} \norm{Ax}$ and $\tilde{v} \in S^{n-1}$ for $\sup_{v \in S^{n-1}} \skal{\tilde{A}x, v}$
	\begin{align*}
	\abs{\dpartial^+ f_{\mathcal{A}}}^2 &\le \sum_i \sup_{x_i'} \big( \sup_{v} \skal{\tilde{A}x,v} - \sup_v \skal{\tilde{A}(x_{i^c}, x_i'), v)} \big)_+^2 \le \sum_i \sup_{x_i'} \big( (x_i - x_i') \skal{\tilde{A}e_i, \tilde{v}} \big)_+^2 \\
	&\le 4 \sum_i \skal{\tilde{A}e_i, \tilde{v}}^2 \le 4 \big( \sup_{w} \sum_i w_i \skal{\tilde{A}e_i, \tilde{v}}\big)^2 
	\le 4 \sup_{A \in \mathcal{A}} \norm{A}_{\mathrm{op}}^2.
	\end{align*}
	Here, the suprema of $v$ and $w$ are taken over the $n$-dimensional sphere. 
	We can now apply Corollary \ref{corollary:concIneqAlteVersion} to $\Gamma = \dpartial^+$, $\rho = 2\sigma^2$, $g = 4f_{\mathcal{A}}$ and $b = 8 \Sigma$ to finish the proof.
\end{proof}

\begin{proof}[Proof of Proposition \ref{proposition:dPlusModLSI}]
	The idea of the proof of the $\mathrm{mLSI}$s is already present in \cite{BG07}. Let $(\Omega, \mathcal{F}, \nu)$ be any probability space. For any function $g$ we have due to the inequality $(a-b)_+(e^a - e^b)_+ \le \frac{1}{2} (a-b)_+^2 (e^a + e^b)$ (for all $a,b \in \IR$)
	\begin{align*}
	\mathrm{Cov}_\nu(g, e^g) 
	&\le \frac{1}{2} \iint (g(x) - g(y))_+^2(e^{g(x)} + e^{g(y)}) d\nu(x) d\nu(y)\\
	&= \frac{1}{2} \iint (g(x) - g(y))^2 d\nu(y) e^{g(x)} d\nu(x).
	\end{align*}
	Applying this to $\nu = \mu(\cdot \mid x_{i^c})$ and $g = f(x_{i^c}, \cdot)$ and using \eqref{eqn:modLSI} yields
	\begin{align*}
	\mathrm{Ent}_{\mu}(e^f) \le \frac{\sigma^2}{2} \sum_{i = 1}^n \iint (f(x) - f(x_{i^c}, x_i'))^2 d\mu(x_i' \mid x_{i^c}) e^{f(x)} d\mu(x) = \frac{\sigma^2}{2} \int \abs{\dpartial f}^2 e^f d\mu.
	\end{align*}
	To see that $\mu$ also satisfies a $\mathfrak{d^+}\mathrm{-mSLI} (2\sigma^2)$, it remains to apply Lemma \ref{mLSIs}. The exponential inequalities are a consequence of Theorem \ref{theorem:BG99}.
\end{proof}

\begin{proof}[Proof of Theorem \ref{theorem:Bernstein}]
	Write $X = (X_1, \ldots, X_n)$. Let us assume that $\IE X_i = 0$ for all $i \in \{1,\ldots, n\}$, from which the general case follows easily using the inequality
	\[
	\norm{\max_i \abs{X_i - \IE X_i}}_{\Psi_2} \le 4 \norm{\max_i \abs{X_i}}_{\Psi_2}.
	\]
	
	Since the $X_i$ are independent, it follows from Proposition \ref{proposition:dPlusModLSI} that their joint distribution $\mathbb{P}_X$ satisfies a $\dpartial\mathrm{-mLSI}(1)$, and we can calculate
	\begin{align*}
	\abs{\dpartial f}(X) &= \Big( \sum_{i = 1}^n \int (X_i - y_i)^2 d\IP_{X_i}(y_i) \Big)^{1/2} = \Big( \sum_{i = 1}^n X_i^2 + \IE X_i^2 \Big)^{1/2} \\
	&\le \abs{\norm{X}_2 - \IE \norm{X}_2} + \IE \norm{X}_2 + \Big( \sum_{i =1}^n \IE X_i^2 \Big)^{1/2} \eqqcolon g(X).
	\end{align*}
	To apply Theorem \ref{theorem:SecondOrderConcentration}, it remains to show that we may set $c = \IE \norm{X}_2 + \sqrt{\mathrm{Var}(\sum_i X_i)}$ and $K = \norm{\max_i \abs{X_i}}_{\Psi_2}$. This is seen by noting that
	\[
	\IP(g(X) \ge c + t) = \IP(\abs{\norm{X}_2 - \IE \norm{X}_2} \ge t) \le 2\exp\Big( -c_2 \frac{t^2}{\norm{\max_i \abs{X_i}}_{\Psi_2}^2} \Big),
	\]
	where the last step follows from \cite[Lemma 1.4]{KZ18}, as $X \mapsto \norm{X}_2$ is a convex and $1$-Lipschitz function. Note that although \cite[Lemma 1.4]{KZ18} is formulated for $t \ge t_0 > 0$, one can easily find an estimate for all $t \ge 0$, by first multiplying the right hand side by $2$, and then adjusting the constant in the exponential.
\end{proof}

Recall that as discussed above, the application of Theorem \ref{theorem:BG99} is only possible for bounded functions, so that an additional truncation step needs to be done. Instead of applying Theorem \ref{theorem:BG99} to $f(X) = \sum_i X_i - \IE X_i$, it is applied to the sum of the random variables $Y_i \coloneqq g_R(X_i) - \IE g_R(X_i)$ for $g_R(x) = \min(R,\max(x,-R))$ for a suitable truncation level $R > 0$. As the right hand side of equation \eqref{eqn:BernsteinIneq} can be chosen to be independent of $R$, the theorem follows for unbounded random variables by letting $R \to \infty$.

\printbibliography

 \end{document}